\numberwithin{equation}{section}
\theoremstyle{plain}
\newtheorem{theorem}{Theorem}[section]
\newtheorem{lemma}[theorem]{Lemma}
\newtheorem{proposition}[theorem]{Proposition}
\newtheorem{corollary}[theorem]{Corollary}
\theoremstyle{definition}
\newtheorem{definition}[theorem]{Definition}
\theoremstyle{remark}
\newtheorem{remark}[theorem]{Remark}
\newtheorem{example}[theorem]{Example}
\newcounter{counter_a}
\newcounter{counter_b}
\newcommand{\cL}{\mathcal{L}}
\newcommand{\Dom}{{\rm Dom}}
\newcommand{\Spec}{{\rm Spec}}
\newcommand{\Span}{{\rm span}}
\renewcommand{\Re}{{\rm Re}\;}
\renewcommand{\Im}{{\rm Im}\;}
\newcommand{\efrac}[2]{\genfrac{}{}{0ex}{}{#1}{#2}}
\newcommand{\dist}{{\rm dist}}
\DeclareMathOperator\spn{span}
\newcommand{\ess}{\mathrm{ess}}
\newcommand{\rank}{\mathrm{Rank}}
\newcommand{\range}{\mathrm{Range}}
\newcommand{\nul}{\mathrm{nul}}
\newcommand{\dis}{\mathrm{dis}}
\newcommand\void[1]{}
\begin{document}
\title[Spectral enclosure and superconvergence for eigenvalues in gaps]{Spectral enclosure and superconvergence for eigenvalues in gaps}
\author[J. Hinchcliffe \& M. Strauss]{James Hinchcliffe$^{\dagger}$ \& Michael Strauss$^{\ddagger}$}
\begin{abstract}
We consider the problem of how to compute eigenvalues of a self-adjoint operator when a direct application of the Galerkin (finite-section) method is unreliable. The last two decades have seen the development of the so-called quadratic methods for addressing this problem. Recently a new perturbation approach has emerged, the idea being to perturb eigenvalues off the real line and, consequently, away from regions where the Galerkin method fails. We propose a simplified perturbation method which requires no \'a priori information and for which we provide a rigorous convergence analysis. The latter shows that, in general, our approach will significantly outperform the quadratic methods. We also present a new spectral enclosure for operators of the form $A+iB$ where $A$ is self-adjoint, $B$ is self-adjoint and bounded. This enables us to control, very precisely, how eigenvalues are perturbed from the real line. The main results are demonstrated with examples including magnetohydrodynamics, Schr\"odinger and Dirac operators.

\vspace{4pt}\noindent\emph{Keywords:} Spectral enclosure, eigenvalue problem, perturbation of eigenvalues, spectral pollution, Galerkin method, finite-section method, superconvergence.

\vspace{4pt}\noindent\emph{2010 Mathematics Subject Classification:} 47A10, 47A55, 47A58, 47A75.

\vspace{4pt}\noindent$\dagger$  Email:  james.hinchcliffe@gmail.com.

\vspace{4pt}\noindent$\ddagger$ Department of Mathematics, University of Sussex, Falmer Campus, Brighton BN1 9QH, UK. Email: m.strauss@sussex.ac.uk.
\end{abstract}

\maketitle
\thispagestyle{empty}

\section{Introduction}

Computational spectral theory for operators which act on infinite dimensional Hilbert spaces has advanced significantly in recent years. For self-adjoint operators, the introduction of \emph{quadratic} methods has enabled the approximation of those eigenvalues which are not reliably located by a direct application of the Galerkin method. The latter is due to \emph{spectral pollution}; see examples \ref{uber2}, \ref{magneto} and \cite{boff,boff2,bost,dasu,DP,lesh,rapp,me4}. Notable amongst these quadratic techniques are the Davies \& Plum method \cite{DP}, the Zimmermann \& Mertins method \cite{zim}, and the second order relative spectra \cite{bo,bb,bole,bost,dav,lesh,shar,shar2,me2,me3}. For spectral approximation of arbitrary operators see, for example, \cite{chat,ah1,ah2,ah3} and references therein.

The present manuscript is concerned with a technique for self-adjoint operators which is pollution-free and non-quadratic. The idea is to perturb eigenvalues into $\mathbb{C}^+$ and then approximate them with the Galerkin method. This idea was initially proposed for a particular  class of differential operators; see \cite{mar1,mar3,mar2}. An abstract version of this approach for bounded self-adjoint operators was formulated in \cite{me}. The latter requires \'a priori information about the location of gaps in the essential spectrum. Our main aims are to remove the requirement of \'a priori information, to present a rigorous convergence analysis, and demonstrate the effectiveness of our method including a comparison with the quadratic methods. Along the way, we also prove new spectral enclosure results for any operator of the form $A+iB$ where $A$ is self-adjoint and $B$ is bounded and self-adjoint; we note that any bounded operator can be expressed in this form. We now give a brief outline of our main results.

In Section 3, we consider the spectra of operators of the form $A+iB$. 
The main result is Theorem \ref{cor1a} where we give our new spectral enclosure results. We will define a region in terms of the spectra of $A$ and $B$, then show that it contains the spectrum of $A+iB$. Corollary \ref{2eigs} shows that the enclosure is, in a sense, sharp.

Section 4 is primarily concerned with the perturbation of an eigenvalue, $\lambda$, of a self-adjoint operator, $A$. We consider $A + iP_n$ where $(P_n)_{n\in\mathbb{N}}$ is a sequence of orthogonal projections. The main results are Theorem \ref{est} and Theorem \ref{QQ} where we prove extremely rapid convergence properties of the eigenspaces and eigenvalues associated to the perturbed eigenvalue.

In Section 5, we present our new perturbation method. The idea is based on applying the Galerkin method to $A+iP_n$ for a fixed $n\in\mathbb{N}$. The preceding results enable us to lift an eigenvalue, $\lambda$, off the real line, away from the essential spectrum, and extremely close to $\lambda+i$ where it can be approximated by a direct application of the Galerkin method. The main results are Theorem \ref{limlem1} and Theorem \ref{eigconv}, where we prove the rapid convergence of Galerkin eigenspaces and eigenvalues.

In Section 6, we apply our method to several operators arising in magnetohydrodynamics, non-relativistic and relativistic quantum mechanics. Most of our examples involve calculations using trial spaces belonging to the form domain and not the operator domain. In particular, we use the FEM spaces of piecewise linear trial functions. However, the quadratic methods require trial spaces from the operator domain. In our last example we use the operator domain which allows a comparison with the quadratic methods.

Let us now fix some notation. Unless stated otherwise, $A$ will denote a semi-bounded (from below) self-adjoint operator acting on a Hilbert space $\mathcal{H}$. The quadratic form, spectrum, resolvent set, discrete spectrum, essential spectrum and spectral measure we denote by $\frak{a}$, $\sigma(A)$, $\rho(A)$, $\sigma_{\dis}(A)$, $\sigma_{\ess}(A)$ and $E$, respectively. For $\Delta\subset\mathbb{R}$ we denote the range of $E(\Delta)$ by $\mathcal{L}(\Delta)$. Associated to the form $\frak{a}$ is the Hilbert space $\mathcal{H}_\frak{a}$ which has inner-product
\[
\langle u,v\rangle_{\frak{a}}:=\frak{a}(u,v) - (m-1)\langle u,v\rangle\quad\forall u,v\in\Dom(\frak{a})\quad\textrm{where}\quad m=\min\sigma(A)
\]
and norm
\begin{equation}\label{anorm}
\Vert u\Vert_{\frak{a}} =\big(\frak{a}(u,u) - (m-1)\langle u,u\rangle\big)^{\frac{1}{2}}=\Vert(A-m+1)^{\frac{1}{2}}u\Vert.
\end{equation}
The gap or distance between two subspaces $\mathcal{M}$ and $\mathcal{N}$ of $\mathcal{H}$, is defined as
\[
\hat{\delta}(\mathcal{M},\mathcal{N}) = \max\big[\delta(\mathcal{M},\mathcal{N}),\delta(\mathcal{N},\mathcal{M})\big]
\textrm{~where~}
\delta(\mathcal{M},\mathcal{N}) = \sup_{u\in\mathcal{M},\Vert u\Vert=1}\dist(u,\mathcal{N});
\]
see \cite[Section IV.2.1]{katopert} for further details. We shall write $\delta_{\frak{a}}$ to indicate the gap between subspaces with respect to the norm \eqref{anorm}.

\section{Auxiliary geometric results}

Throughout this section, we assume that $\alpha,\beta,\gamma,\delta\in\mathbb{R}$ with $-\infty<\alpha<\beta<\infty$ and $-\infty<\gamma<\delta<\infty$.

\begin{definition}\label{gamdef}
The functions $f,g:[0,1]\to\mathbb{C}$ and the region $\mathcal{U}_{\alpha,\beta}^{\gamma,\delta}$, we define as:
\begin{enumerate}
\item if $\beta-\alpha\le \delta-\gamma$
\begin{align*}
\Re f(t) &= \Re g(t) = \alpha(1-t )+ \beta t,\\
\Im  f(t) &= \frac{\gamma+\delta}{2} - \sqrt{\left(\frac{\delta-\gamma}{2}\right)^2+\big(\Re f(t) - \alpha\big)\big(\Re f(t) - \beta\big)},\\
\Im  g(t) &= \frac{\gamma+\delta}{2} + \sqrt{\left(\frac{\delta-\gamma}{2}\right)^2+\big(\Re g(t) - \alpha\big)\big(\Re g(t) - \beta\big)},\\
\mathcal{U}_{\alpha,\beta}^{\gamma,\delta}&:=\Bigg\{z\in\mathbb{C}:~\alpha<\Re z<\beta,~\text{with either}~\\
&\gamma\le\Im z<\Im f\left(\frac{\Re z-\alpha}{\beta-\alpha}\right)
~\textrm{or}~\Im g\left(\frac{\Re z-\alpha}{\beta-\alpha}\right)<\Im z\le \delta\Bigg\},
 \end{align*}
\item if $\beta-\alpha>\delta-\gamma$
\begin{align*}
\Im f(t) &= \Im g(t) = (1-t)\gamma + t\delta,\\
\Re  f(t)&= \frac{\alpha+\beta}{2}-\sqrt{\left(\frac{\beta-\alpha}{2}\right)^2 + \big(\Im f(t) - \gamma\big)\big(\Im f(t) - \delta\big)},\\
\Re  g(t)&= \frac{\alpha+\beta}{2}+\sqrt{\left(\frac{\beta-\alpha}{2}\right)^2 + \big(\Im g(t) - \gamma\big)\big(\Im g(t) - \delta\big)},\\
\mathcal{U}_{\alpha,\beta}^{\gamma,\delta}&:=\Bigg\{z\in\mathbb{C}:~\gamma\le\Im z\le \delta~\text{and}\\
&\hspace{75pt}\Re f\left(\frac{\Im z-\gamma}{\delta-\gamma}\right)<\Re z<\Re g\left(\frac{\Im z-\gamma}{\delta-\gamma}\right)\Bigg\}.
\end{align*}
\end{enumerate}
We also define
\[
\Gamma_{\alpha,\beta}^{\gamma,\delta}:=\big\{z\in\mathbb{C}:\exists t\in[0,1]\textrm{ with }z=f(t)\text{ or }z=g(t)\big\}.
\]
\end{definition}

The curves and regions defined in Definition \ref{gamdef} are demonstrated in Figure 1.

\begin{figure}[h!]
\centering
\includegraphics[scale=.3]{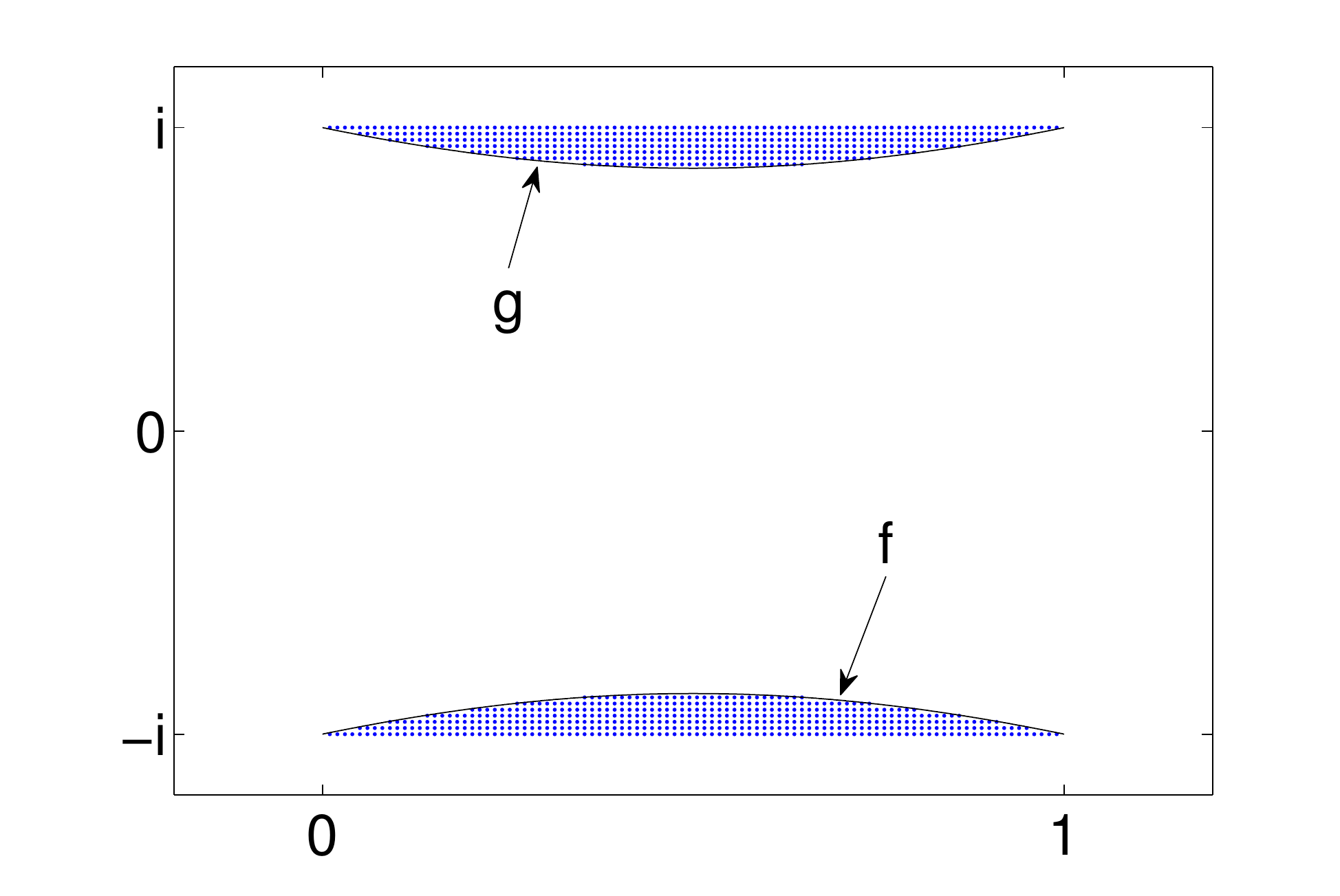}\includegraphics[scale=.3]{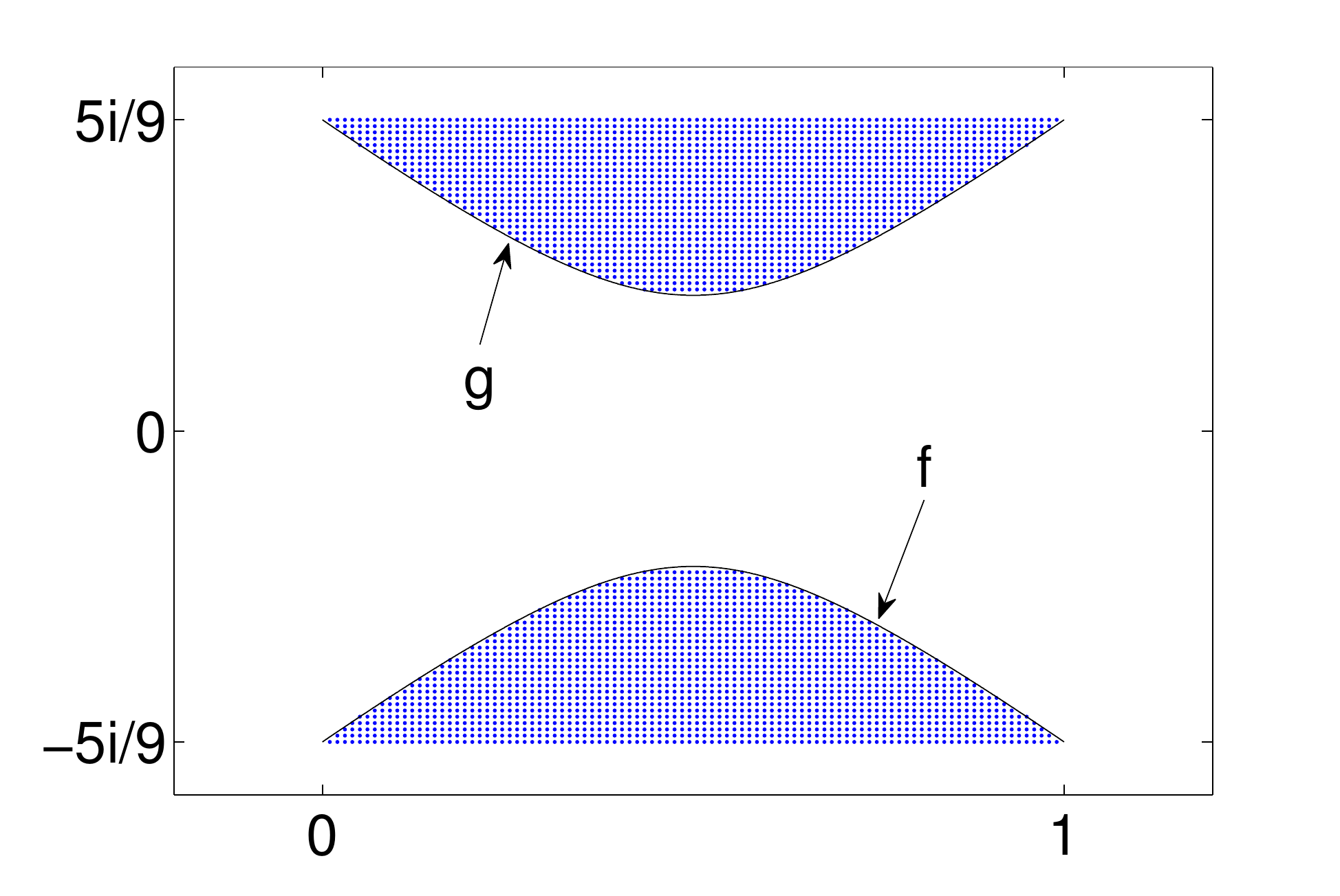}
\includegraphics[scale=.3]{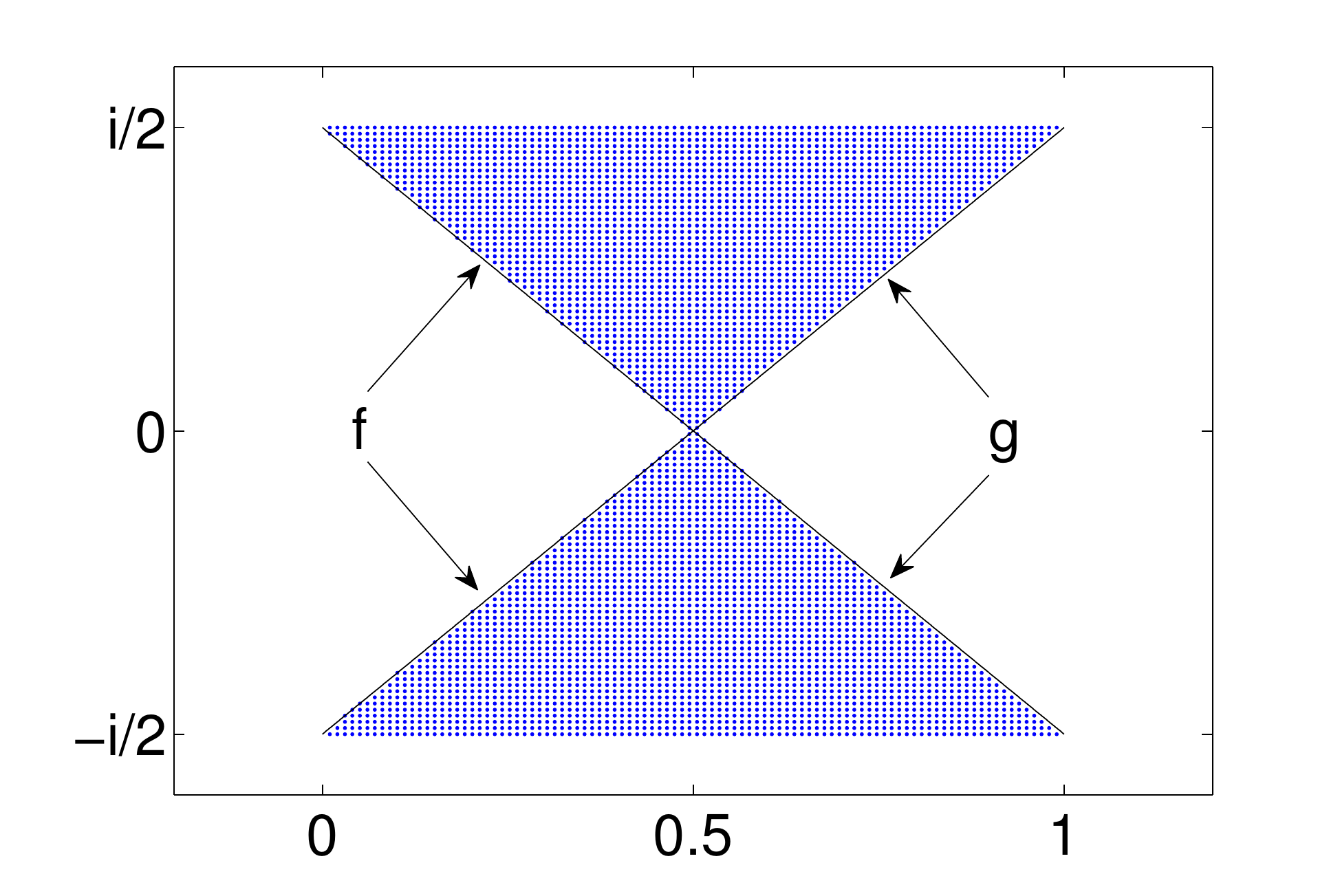}\includegraphics[scale=.3]{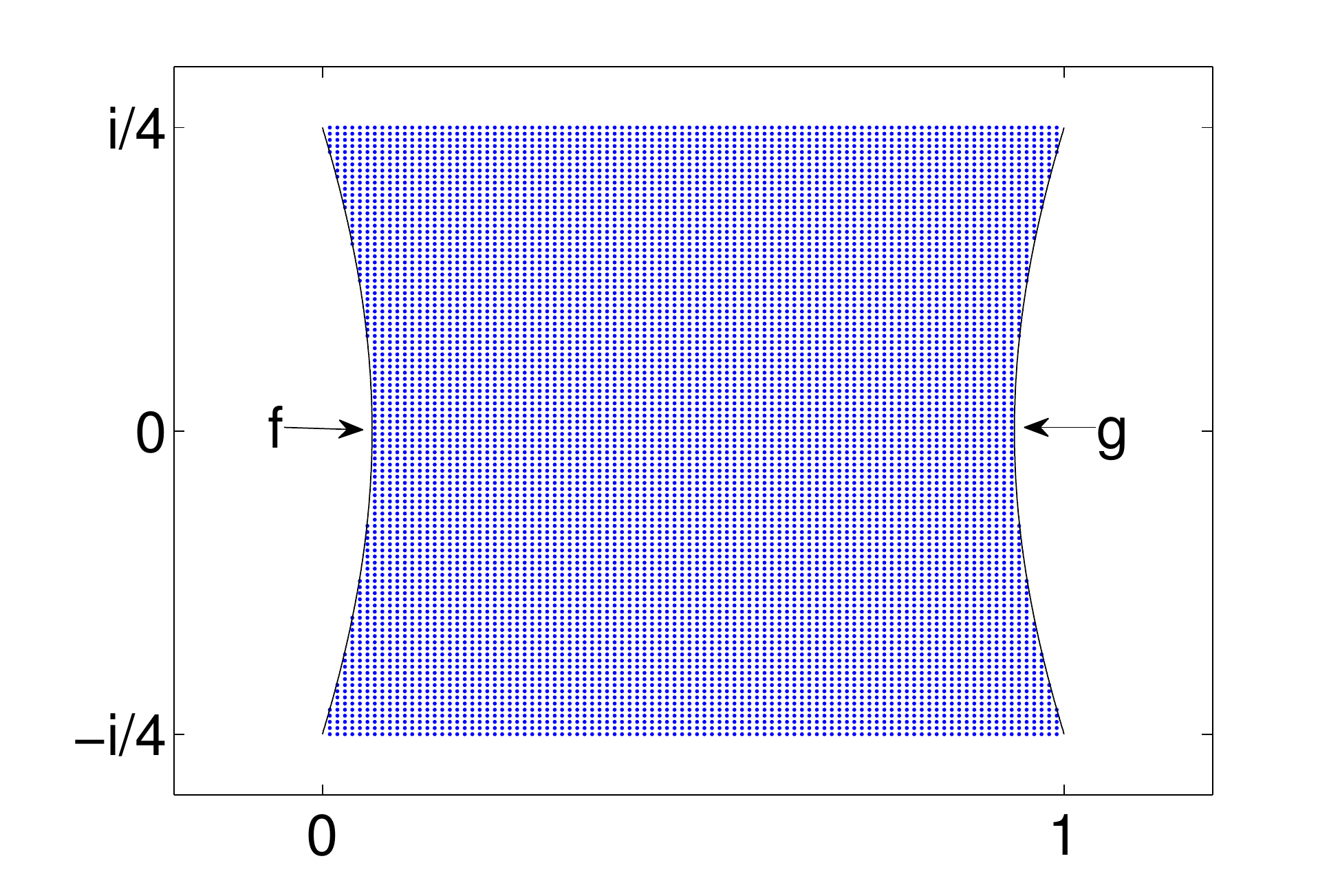}
\caption{The figures show the curves $f$ and $g$ (which together form $\Gamma_{\alpha,\beta}^{\gamma,\delta}$). The shaded regions are $\mathcal{U}_{\alpha,\beta}^{\gamma,\delta}$. Clockwise from top left:  $\alpha=0$, $\beta = 1$, $\gamma=-1$, $\delta=1$; $\alpha=0$, $\beta = 1$, $\gamma=-5/9$, $\delta=5/9$; $\alpha=0$, $\beta = 1$, $\gamma=-1/2$, $\delta=1/2$; $\alpha=0$, $\beta = 1$, $\gamma=-1/4$, $\delta=1/4$.}
\end{figure}
The assertions of the following two lemmata are immediate consequences of the above definition.

\begin{lemma}\label{gamprops}
If $\beta-\alpha\le \delta-\gamma$, then 
\begin{align*}
\gamma\le\Im f(t) &\le \frac{\gamma+\delta}{2} - \sqrt{\left(\frac{\delta-\gamma}{2}\right)^2-\left(\frac{\beta-\alpha}{2}\right)^2}\le\frac{\gamma+\delta}{2}\quad\textrm{and}\\
\delta\ge\Im g(t) &\ge \frac{\gamma+\delta}{2} + \sqrt{\left(\frac{\delta-\gamma}{2}\right)^2-\left(\frac{\beta-\alpha}{2}\right)^2}\ge\frac{\gamma+\delta}{2}\quad\forall t\in[0,1].
\end{align*}
If $\beta-\alpha> \delta-\gamma$, then 
\begin{align*}
\alpha\le\Re f(t) &\le \frac{\alpha+\beta}{2}-\sqrt{\left(\frac{\beta-\alpha}{2}\right)^2-\left(\frac{\delta-\gamma}{2}\right)^2}<\frac{\alpha+\beta}{2}\quad\textrm{and}\\
\beta\ge\Re g(t) &\ge \frac{\alpha+\beta}{2}+\sqrt{\left(\frac{\beta-\alpha}{2}\right)^2-\left(\frac{\delta-\gamma}{2}\right)^2}>\frac{\alpha+\beta}{2}\quad\forall t\in[0,1].
\end{align*}
\end{lemma}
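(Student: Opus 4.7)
The plan is to reduce both parts of the lemma to a single elementary fact about the quadratic $h(s)=(s-\alpha)(s-\beta)$ on the interval $[\alpha,\beta]$: namely, $h$ attains its minimum value $-\big((\beta-\alpha)/2\big)^2$ at the midpoint $s=(\alpha+\beta)/2$ and attains its maximum value $0$ at the endpoints. An analogous statement holds for $\tilde h(s)=(s-\gamma)(s-\delta)$ on $[\gamma,\delta]$. Since the curves $f$ and $g$ in Definition \ref{gamdef} are defined by adding a fixed constant to such a quadratic under a square root, the bounds in the lemma should fall out by direct substitution.

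For case (i), where $\beta-\alpha\le \delta-\gamma$, I would observe first that $\Re f(t)=\Re g(t)=\alpha(1-t)+\beta t$ traces $[\alpha,\beta]$ as $t$ runs over $[0,1]$, so by the fact above,
\[
-\left(\frac{\beta-\alpha}{2}\right)^{\!2}\le\big(\Re f(t)-\alpha\big)\big(\Re f(t)-\beta\big)\le 0.
\]
Adding $\big((\delta-\gamma)/2\big)^2$ and invoking the case hypothesis $\beta-\alpha\le\delta-\gamma$ shows that the argument of the square root in the formulae for $\Im f(t)$ and $\Im g(t)$ lies in the interval
\[
\left[\left(\tfrac{\delta-\gamma}{2}\right)^{\!2}-\left(\tfrac{\beta-\alpha}{2}\right)^{\!2},\;\left(\tfrac{\delta-\gamma}{2}\right)^{\!2}\right],
\]
which is in particular non-negative, so the square root is real. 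Extracting it and plugging into the definitions of $\Im f(t)$ and $\Im g(t)$ yields all four chained inequalities in case (i) simultaneously: the outer bounds $\gamma\le\Im f(t)$ and $\Im g(t)\le\delta$ come from the endpoints $t\in\{0,1\}$, while the inner bounds come from $t=1/2$.

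Case (ii) is completely symmetric with the roles of real and imaginary parts exchanged. Here $\Im f(t)=\Im g(t)=(1-t)\gamma+t\delta$ sweeps out $[\gamma,\delta]$, so $\big(\Im f(t)-\gamma\big)\big(\Im f(t)-\delta\big)\in\big[-((\delta-\gamma)/2)^2,0\big]$, and the hypothesis $\beta-\alpha>\delta-\gamma$ again ensures the discriminant is non-negative. The same substitution argument then delivers the stated bounds on $\Re f(t)$ and $\Re g(t)$.

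There is no genuine obstacle in this proof; it is essentially a two-line verification, which is why the authors flag the lemma as an immediate consequence of the definition. The only point worth double-checking is that one does not leave the real axis under the square root, and this is precisely what the case hypotheses $\beta-\alpha\le\delta-\gamma$ and $\beta-\alpha>\delta-\gamma$ are designed to guarantee.
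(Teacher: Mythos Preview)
Your proposal is correct and matches the paper's approach: the authors state that the lemma is an immediate consequence of Definition~\ref{gamdef} and give no further argument, and your verification via the range of the quadratic $(s-\alpha)(s-\beta)$ on $[\alpha,\beta]$ is exactly the computation that makes this immediate. One small phrasing point: when you say the outer bounds ``come from the endpoints $t\in\{0,1\}$'' and the inner bounds ``come from $t=1/2$'', you mean that these are the values of $t$ at which the respective bounds are \emph{attained}; the inequalities themselves of course hold for all $t\in[0,1]$, which is what the lemma asserts.
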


\begin{lemma}\label{olderlem}
If $\alpha<\Re z\le\beta$, $\gamma\le\Im z\le \delta$ and $z\notin\mathcal{U}_{\alpha,\beta}^{\gamma,\delta}\cup\Gamma_{\alpha,\beta}^{\gamma,\delta}$, then
\[
\Re z  -  \frac{(\Im z-\delta)(\Im z-\gamma)}{\Re z- \alpha} >\beta.
\]
If $z\in\Gamma_{\alpha,\beta}^{\gamma,\delta}$ and $\Re z\ne \alpha$, then
\[
\Re z -  \frac{(\Im z-\delta)(\Im z-\gamma)}{\Re z- \alpha} 
= \beta.
\]
If $z\in\mathcal{U}_{\alpha,\beta}^{\gamma,\delta}$, then
\[
\Re z  -  \frac{(\Im z-\delta)(\Im z-\gamma)}{\Re z- \alpha} < \beta.
\]
\end{lemma}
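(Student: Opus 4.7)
The plan is to reduce all three claims to a single sign analysis and then read that sign off the definition of $\mathcal{U}_{\alpha,\beta}^{\gamma,\delta}$ and $\Gamma_{\alpha,\beta}^{\gamma,\delta}$. Writing $x=\Re z$ and $y=\Im z$, the hypothesis $\Re z>\alpha$ gives $x-\alpha>0$, and a direct calculation yields
\[
\Re z - \frac{(\Im z-\delta)(\Im z-\gamma)}{\Re z-\alpha} - \beta \;=\; \frac{Q(x,y)}{x-\alpha},\qquad Q(x,y):=(x-\alpha)(x-\beta)-(y-\gamma)(y-\delta),
\]
so the three displayed inequalities are equivalent to $Q(x,y)$ being positive, zero, or negative, respectively.

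The key step is to identify $\Gamma_{\alpha,\beta}^{\gamma,\delta}$ inside the closed rectangle $[\alpha,\beta]\times[\gamma,\delta]$ with the zero set $\{Q=0\}$. In Case (i) of Definition \ref{gamdef}, isolating the square root in the formula for $\Im f$ and squaring produces $(y-\tfrac{\gamma+\delta}{2})^{2}=(\tfrac{\delta-\gamma}{2})^{2}+(x-\alpha)(x-\beta)$, which rearranges (via difference of squares) to $Q(x,y)=0$; the same equation comes out of the definition of $\Im g$ in Case (i), and, with the roles of $x$ and $y$ interchanged, of $\Re f$ and $\Re g$ in Case (ii). Conversely, solving $Q=0$ for $y$ in Case (i), or for $x$ in Case (ii), within the rectangle recovers precisely those two branches and no others. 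Combined with the sign reduction above, and the hypothesis $\Re z\ne\alpha$ permitting division, this gives the middle assertion.

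For the strict inequalities I would run exactly the same squaring computation, but with an inequality in place of an equality, being careful to use that both sides are nonnegative (which is supplied by Lemma \ref{gamprops}). For example, in Case (i) with $\alpha<x<\beta$ and $\gamma\le y<\Im f\bigl(\tfrac{x-\alpha}{\beta-\alpha}\bigr)$, Lemma \ref{gamprops} gives
\[
\tfrac{\gamma+\delta}{2}-y>\sqrt{(\tfrac{\delta-\gamma}{2})^{2}+(x-\alpha)(x-\beta)}\ge 0,
\]
so squaring yields $(y-\gamma)(y-\delta)>(x-\alpha)(x-\beta)$, i.e.\ $Q(x,y)<0$. The sub-case $\Im g<y\le\delta$ of Case (i) and the two sub-cases of Case (ii) are identical after the obvious coordinate swap, giving $Q<0$ throughout $\mathcal{U}_{\alpha,\beta}^{\gamma,\delta}$ and hence the third assertion. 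For the first assertion, a point $(x,y)$ in the rectangle outside $\mathcal{U}_{\alpha,\beta}^{\gamma,\delta}\cup\Gamma_{\alpha,\beta}^{\gamma,\delta}$ is characterised, in Case (i), by $\alpha<x<\beta$ with $\Im f<y<\Im g$ or else $x=\beta$ with $\gamma<y<\delta$ (and symmetrically in Case (ii)); in each of these sub-cases $|y-\tfrac{\gamma+\delta}{2}|$ is strictly less than the corresponding square root, so squaring reverses the previous inequality and gives $Q>0$. No step is a serious obstacle; the whole argument is a routine algebraic verification, the only care required being the sign bookkeeping when squaring, which is provided by Lemma \ref{gamprops}.
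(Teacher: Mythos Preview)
Your proof is correct and is precisely the algebraic verification the paper has in mind: the paper does not write out a proof of this lemma, stating only that it is an immediate consequence of Definition~\ref{gamdef}, and your reduction to the sign of $Q(x,y)=(x-\alpha)(x-\beta)-(y-\gamma)(y-\delta)$ together with the case-by-case squaring is exactly that immediate consequence made explicit.
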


\begin{lemma}\label{oldlem}
If $z\in\mathcal{U}_{\alpha,\beta}^{\gamma,\delta}$, then
\[
\beta - \Re z\ge\frac{\dist(z,\Gamma_{\alpha,\beta}^{\gamma,\delta})^2}{\Re z-\alpha}-  \frac{(\Im z-\delta)(\Im z-\gamma)}{\Re z- \alpha}.
\]
\end{lemma}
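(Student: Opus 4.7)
The plan is to recast the inequality in a cleaner form and then exhibit an explicit point of $\Gamma_{\alpha,\beta}^{\gamma,\delta}$ that witnesses the distance bound. Since $\Re z-\alpha>0$ on $\mathcal{U}_{\alpha,\beta}^{\gamma,\delta}$, the asserted inequality is equivalent to
\[
F(z):=(\beta-\Re z)(\Re z-\alpha)+(\Im z-\delta)(\Im z-\gamma)\ge\dist(z,\Gamma_{\alpha,\beta}^{\gamma,\delta})^2.
\]
By Lemma \ref{olderlem}, $F$ is strictly positive on $\mathcal{U}_{\alpha,\beta}^{\gamma,\delta}$ and vanishes along $\Gamma_{\alpha,\beta}^{\gamma,\delta}$, so it will suffice to produce one $z_0\in\Gamma_{\alpha,\beta}^{\gamma,\delta}$ with $F(z)\ge|z-z_0|^2$; the conclusion follows at once from $\dist(z,\Gamma_{\alpha,\beta}^{\gamma,\delta})\le|z-z_0|$.

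To find such a $z_0$, I would recentre coordinates. Setting $a=(\beta-\alpha)/2$, $b=(\delta-\gamma)/2$, $u=\Re z-(\alpha+\beta)/2$ and $v=\Im z-(\gamma+\delta)/2$, a short expansion gives $F(z)=(a^2-u^2)+(v^2-b^2)$, and $\Gamma_{\alpha,\beta}^{\gamma,\delta}$ becomes the portion of the hyperbola $v_0^2-u_0^2=b^2-a^2$ lying in $|u_0|\le a$, $|v_0|\le b$. In Case~1 ($a\le b$), the hyperbola opens vertically, and Definition \ref{gamdef} shows that $|v|>v_\ast:=\sqrt{u^2+b^2-a^2}$ for every $z\in\mathcal{U}_{\alpha,\beta}^{\gamma,\delta}$. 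The natural candidate is the vertical projection $z_0=(u,\operatorname{sgn}(v)\,v_\ast)$, which belongs to $\Gamma_{\alpha,\beta}^{\gamma,\delta}$ because $|u|<a$. The identity
\[
F(z)=v^2-v_\ast^2=(|v|-v_\ast)(|v|+v_\ast),\qquad |z-z_0|^2=(|v|-v_\ast)^2,
\]
together with $v_\ast\ge 0$ gives $F(z)\ge|z-z_0|^2$ immediately.

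Case~2 ($a>b$) is handled by the same device with the roles of $u$ and $v$ interchanged, taking the horizontal projection $z_0=(\operatorname{sgn}(u)\sqrt{v^2+a^2-b^2},v)$; the analogous difference-of-squares factorisation delivers $F(z)\ge|z-z_0|^2$. There is essentially no analytic obstacle: once $F$ is recognised as a difference of squares whose vanishing locus is precisely $\Gamma_{\alpha,\beta}^{\gamma,\delta}$, the inequality becomes algebraically transparent. The only bookkeeping point requiring mild care is to check in each case that the projected $z_0$ actually lies in the rectangular piece of the hyperbola — that is, within the parameter range $t\in[0,1]$ of Definition \ref{gamdef} — which is immediate from $|u|<a$ in Case~1 and $|v|\le b$ in Case~2.
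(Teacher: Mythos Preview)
Your argument is correct and follows essentially the same strategy as the paper: in each case you project $z$ vertically (Case~1) or horizontally (Case~2) onto $\Gamma_{\alpha,\beta}^{\gamma,\delta}$ and show that the squared distance to this projected point is dominated by $F(z)$. The recentring to $(u,v)$-coordinates and the resulting difference-of-squares factorisation is a tidy repackaging of the paper's direct computation (which instead invokes Lemmas~\ref{gamprops} and~\ref{olderlem}), but the underlying idea is identical.
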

\begin{proof} Let $z\in\mathcal{U}_{\alpha,\beta}^{\gamma,\delta}$.
First, consider the case where $\beta-\alpha\le \delta-\gamma$. By Lemma \ref{gamprops}, we have $\Im z\ne (\gamma+\delta)/2$. We assume that $\Im z>(\gamma+\delta)/2$, the case where $\Im z<(\gamma+\delta)/2$ may be treated similarly. For some $r\ge\dist(z,\Gamma_{\alpha,\beta}^{\gamma,\delta})>0$ and $t\in[0,1]$, we have 
\[\Re z + (\Im z-r)i=g(t).
\]
Then, using Lemma \ref{olderlem},
\[
\Re z -  \frac{(\Im z-r-\delta)(\Im z-r-\gamma)}{\Re z- \alpha} 
= \beta
\]
and hence
\begin{align*}
\beta-  \Re z + \frac{(\Im z-\delta)(\Im z-\gamma)}{\Re z- \alpha} &= -\frac{r(\gamma+\delta+ r-2\Im z)}{\Re z-\alpha}>\frac{r^2}{\Re z-\alpha}.
\end{align*}
Now consider the case where $\beta -\alpha>\gamma+\delta$. Let $\Re z\ge(\alpha+\beta)/2$, the case where $\Re z<(\alpha+\beta)/2$ may be treated similarly.  For some $r\ge\dist(z,\Gamma_{\alpha,\beta}^{\gamma,\delta})>0$ we have 
\begin{align*}
\Re z + r -  \frac{(\Im z-\delta)(\Im z-\gamma)}{\Re z+ r- \alpha} =\beta\void{\quad\textrm{and}\quad
\Re z + \frac{\Im z(1-\Im z)}{\Re z - \alpha} <\alpha}
\end{align*}
and hence
\begin{align*}
\beta-  \Re z +  \frac{(\Im z-\delta)(\Im z-\gamma)}{\Re z- \alpha}&= r\left(\frac{2\Re z-\alpha-\beta +r}{\Re z-\alpha}\right)
\ge\frac{r^2}{\Re z-\alpha}.
\end{align*}
\end{proof}

\section{The spectra of $A+iB$}
In this section we will prove enclosure results for $\sigma(A+iB)$. Unless stated otherwise, $A$ is assumed to be a bounded self-adjoint operator with
\[
\min\sigma(A)=:a^-<a^+:=\max\sigma(A).
\]
We shall always assume that $B$ is a bounded self-adjoint operator with 
\[
\min\sigma(B)=:b^-<b^+:=\max\sigma(B).
\]

\begin{lemma}\label{oldestlem}
Let $u\in\mathcal{H}$, then $\Vert Bu\Vert^2 \le (b^-+b^+)\langle Bu,u\rangle -b^-b^+\Vert u\Vert^2$.
\end{lemma}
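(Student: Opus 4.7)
The plan is to exploit the fact that $\sigma(B)\subseteq[b^-,b^+]$ so that the scalar polynomial $p(\lambda)=(\lambda-b^-)(\lambda-b^+)$ is non-positive on the spectrum of $B$. By the functional calculus for bounded self-adjoint operators, non-positivity of $p$ on $\sigma(B)$ lifts to the operator inequality
\[
(B-b^-I)(B-b^+I)\le 0.
\]

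Expanding this product and rearranging gives $B^2\le (b^-+b^+)B - b^-b^+ I$ in the sense of quadratic forms. Next I would take the inner product with an arbitrary $u\in\mathcal{H}$ on both sides, obtaining
\[
\langle B^2u,u\rangle \le (b^-+b^+)\langle Bu,u\rangle - b^-b^+\langle u,u\rangle.
\]
Since $B$ is self-adjoint, $\langle B^2u,u\rangle=\langle Bu,Bu\rangle=\|Bu\|^2$, and $\langle u,u\rangle=\|u\|^2$, so the inequality reduces exactly to the claim.

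There is essentially no obstacle: the only point worth checking is the passage from a scalar inequality for $p$ on $\sigma(B)$ to the operator inequality, but this is a standard consequence of the spectral theorem for bounded self-adjoint operators. The whole argument is a two-line calculation once the factorisation $p(\lambda)=(\lambda-b^-)(\lambda-b^+)$ is introduced; no finer information about $B$ (for instance, whether $b^\pm$ are attained) is needed.
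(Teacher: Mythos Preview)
Your proof is correct and is essentially the same as the paper's: both use that $(\lambda-b^-)(\lambda-b^+)\le 0$ on $[b^-,b^+]$ and lift this to the operator level via the spectral theorem, the paper writing the computation as an integral against the spectral measure while you phrase it as the operator inequality $(B-b^-I)(B-b^+I)\le 0$.
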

\begin{proof}
Let $F$ be the spectral measure associated to $B$. For all $\lambda\in[b^-,b^+]$ we have $(\lambda-b^-)(\lambda-b^+)\le 0$, hence
\begin{align*}
\Vert Bu\Vert^2 &=
 \int_{b^-}^{b^+}\lambda^2~d\langle F_\lambda u,u\rangle\le  \int_{b^-}^{b^+}(\lambda b^- + \lambda b^+ - b^-b^+)~d\langle F_\lambda u,u\rangle\\
 &= (b^-+b^+)\langle Bu,u\rangle - b^-b^+\Vert u\Vert^2.
\end{align*}
\end{proof}

We will make use of the following spectral enclosure result which is due to Kato; see  \cite[Lemma 1]{kat}. Let $u\in\Dom(A)$ where $A$ may be unbounded, $\Vert u\Vert=1$, $\langle Au,u\rangle=\eta$ and $\Vert(A-\eta)u\Vert=\zeta$, then
\begin{align}
\xi<\eta\quad\Rightarrow\quad\left(\xi,\eta+\frac{\zeta^2}{\eta-\xi}\right]\cap\sigma(A)\ne\varnothing.\label{kato1}
\end{align} 

\void{\begin{remark}\label{rem}
If follows, from the proof of Lemma \ref{oldestlem}, that if $\sigma(B)=\{c,d\}$, then
\[
\langle Bu,u\rangle=\delta\Vert u\Vert^2\Rightarrow\Vert Bu\Vert^2 = (c\delta+d\delta-cd)\Vert u\Vert^2.
\]
\end{remark}}

\begin{definition}\label{rsK}
For $a,b\in\mathbb{R}$, $a<b$, we set
\begin{align*}
r_{a,b}&=\max\left\{\frac{b-a}{2},\Vert B\Vert\right\},\\
s_{a,b}&=\Big(r_{a,b}^2+2r_{a,b}\big(4+10\Vert B\Vert\big) + 4\Vert B\Vert^2\Big)(b-a),\\
K_{a,b}&=\max\Big\{r_{a,b}^4,2r_{a,b}^3,s_{a,b}\Big\}.
\end{align*}
\end{definition}

\begin{theorem}\label{lem1b}
Let $(a,b)\subset\rho(A)$ where $A$ may be unbounded from above and/or below. Then
\[
\mathcal{U}_{a,b}^{b^-,b^+}\subset\rho(A+iB)
\]
and
\begin{equation}\label{resbd}
\Vert(A+iB-z)^{-1}\Vert\le \frac{K_{a,b}}{\dist(z,\Gamma_{a,b}^{b^-,b^+})^4}\quad\forall z\in\mathcal{U}_{a,b}^{b^-,b^+}.
\end{equation}
\end{theorem}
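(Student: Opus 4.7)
The plan is to establish both claims simultaneously by proving a quantitative lower bound $\Vert(A+iB-z)u\Vert\ge\frac{\dist(z,\Gamma_{a,b}^{b^-,b^+})^4}{K_{a,b}}\Vert u\Vert$ for every $u\in\Dom(A)$, and then invoking the same argument for the adjoint $(A+iB-z)^*=A-iB-\bar z$. One checks that $\bar z\in\mathcal{U}_{a,b}^{-b^+,-b^-}$ because the defining inequality $(b-x)(x-a)>(b^+-y)(y-b^-)$ of $\mathcal{U}$ is invariant under the substitution $(y,b^-,b^+)\mapsto(-y,-b^+,-b^-)$, so the adjoint falls under the same estimate with the same constant $K_{a,b}$ (which depends on $B$ only through $\Vert B\Vert$). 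The two lower bounds together yield bijectivity of $A+iB-z$, hence the bounded inverse $(A+iB-z)^{-1}$, and the reciprocal of the lower bound is precisely \eqref{resbd}.

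To set up the lower bound, fix a unit $u\in\Dom(A)$ and write $v=(A+iB-z)u$, $\eta=\langle Au,u\rangle$, $\nu=\langle Bu,u\rangle$, $x=\Re z$, $y=\Im z$. Two Kato-type estimates drive the proof. Since $\sigma(A)\cap(a,b)=\varnothing$, the polynomial $(\lambda-a)(\lambda-b)$ is non-negative on $\sigma(A)$, so by the spectral theorem for $A$ we have $\langle(A-a)(A-b)u,u\rangle\ge 0$, which rearranges to
\[
\Vert(A-x)u\Vert^2\ge(x-a)(b-x)+(a+b-2x)(\eta-x).
\]
Analogously, Lemma~\ref{oldestlem} applied to the shift $B-y$ yields
\[
\Vert(B-y)u\Vert^2\le(y-b^-)(b^+-y)+(b^-+b^+-2y)(\nu-y).
\]
Since $\eta-x=\Re\langle v,u\rangle$ and $\nu-y=\Im\langle v,u\rangle$, the Cauchy--Schwarz inequality gives $|\eta-x|,|\nu-y|\le\Vert v\Vert$. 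Subtracting the two displays and invoking Lemma~\ref{oldlem} in the form $(x-a)(b-x)-(y-b^-)(b^+-y)\ge\dist(z,\Gamma_{a,b}^{b^-,b^+})^2$ produces the key inequality
\[
\Vert(A-x)u\Vert^2-\Vert(B-y)u\Vert^2\ge\dist(z,\Gamma_{a,b}^{b^-,b^+})^2-C\Vert v\Vert,
\]
with $C=|a+b-2x|+|b^-+b^+-2y|\le(b-a)+2\Vert B\Vert$.

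The last step feeds this into the reverse triangle inequality $\Vert v\Vert\ge\bigl|\Vert(A-x)u\Vert-\Vert(B-y)u\Vert\bigr|$. If $\Vert v\Vert\ge\dist(z,\Gamma_{a,b}^{b^-,b^+})^2/C$ the desired lower bound is immediate; otherwise the right-hand side of the key inequality is positive, which forces $\Vert(A-x)u\Vert>\Vert(B-y)u\Vert$, so division by $\Vert(A-x)u\Vert+\Vert(B-y)u\Vert$, together with the coarse bounds $\Vert(B-y)u\Vert\le b^+-b^-\le 2\Vert B\Vert$ and $\Vert(A-x)u\Vert\le\Vert v\Vert+\Vert(B-y)u\Vert$, yields a scalar quadratic inequality in $\Vert v\Vert$ that is solved explicitly. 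The main obstacle will be to rearrange the solution so that it matches the precise constant $K_{a,b}$ of Definition~\ref{rsK} and the prescribed fourth power $\dist(z,\Gamma_{a,b}^{b^-,b^+})^4$ in the denominator of \eqref{resbd}; this requires careful bookkeeping of the $r_{a,b}$ and $s_{a,b}$ contributions, together with the bound $\dist(z,\Gamma_{a,b}^{b^-,b^+})\le r_{a,b}$ to convert the more natural second-power lower bound into the stated fourth-power form.
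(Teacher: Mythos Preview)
Your approach is sound and will establish the inclusion $\mathcal{U}_{a,b}^{b^-,b^+}\subset\rho(A+iB)$ together with a resolvent bound of the form $K'/\dist(z,\Gamma_{a,b}^{b^-,b^+})^4$, but it is genuinely different from the paper's route and is unlikely to recover the \emph{exact} constant $K_{a,b}$ of Definition~\ref{rsK}.

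The paper argues by contradiction under the standing assumption $\varepsilon<\min\{1,\dist(z,\Gamma)/2\}$: it feeds the approximate eigenvector into Kato's enclosure \eqref{kato1} with $\xi=a$, so that the resulting interval must reach past $b$, and then combines this with Lemma~\ref{oldlem} to force the lower bound $\varepsilon\ge\dist(z,\Gamma)^4/s_{a,b}$. The three branches $\{r_{a,b}^4,2r_{a,b}^3,s_{a,b}\}$ in $K_{a,b}$ arise precisely from undoing this assumption. You instead use the clean spectral inequality $\langle(A-a)(A-b)u,u\rangle\ge 0$ (which is in fact equivalent to the content of \eqref{kato1} once one unwinds it), subtract the analogous bound for $B$ coming from Lemma~\ref{oldestlem}, and invoke Lemma~\ref{oldlem} in the multiplied-out form $(x-a)(b-x)-(y-b^-)(b^+-y)\ge\dist(z,\Gamma)^2$ to produce a quadratic inequality in $\Vert v\Vert$. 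This is more direct and actually gives a stronger native bound of order $\dist(z,\Gamma)^2$ rather than $\dist(z,\Gamma)^4$.

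The place to be careful is your final paragraph. Solving the quadratic yields $\Vert v\Vert\ge\dist(z,\Gamma)^2/(D+\dist(z,\Gamma))$ with $D\le(b-a)+6\Vert B\Vert$, and after using $\dist(z,\Gamma)\le r_{a,b}$ this becomes $\Vert v\Vert\ge\dist(z,\Gamma)^4/(9r_{a,b}^3)$ or thereabouts. When $b-a$ is small and $r_{a,b}=\Vert B\Vert\le 2$, the paper's $K_{a,b}$ collapses to $2r_{a,b}^3$, which is strictly smaller than your constant; so the ``careful bookkeeping'' you flag cannot, as written, reproduce \eqref{resbd} with the paper's $K_{a,b}$. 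You will obtain a correct theorem with a comparable (indeed, in the regime $\dist(z,\Gamma)\to 0$, sharper) constant, but not the stated one. If the goal is the qualitative result and a polynomial resolvent bound, your argument is complete and arguably cleaner; if the goal is the specific constant in \eqref{resbd}, you would need to revisit the estimates.
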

\begin{proof}
Let $z\in\mathcal{U}_{a,b}^{b^-,b^+}$. First we note that $A+iB-z$ is a closed operator. Let $u\in\Dom(A)$ with $\Vert u\Vert = 1$ and $\Vert(A+iB-z)u\Vert = \varepsilon$. Assume that
\begin{equation}\label{assumption}
\varepsilon<\min\left\{1,\frac{\dist(z,\Gamma_{a,b}^{b^-,b^+})}{2}\right\}.
\end{equation}
Let us show that \eqref{assumption} implies that $a<\Re z -\varepsilon$. We consider the case where $b-a>b^+-b^-$ (the case where $b-a\le b^+-b^-$ may be treated similarly). We have
\[
\Im z = (1-t)b^- + tb^+\quad\text{for some }t\in[0,1].
\]
Then
\[
\Re f\left(\frac{\Im z-b^-}{b^+-b^-}\right) + i\Im z =f\left(\frac{\Im z-b^-}{b^+-b^-}\right)\in\Gamma_{a,b}^{b^-,b^+}
\]
and
\begin{align*}
\varepsilon<\dist(z,\Gamma_{a,b}^{b^-,b^+})&\le\left\vert z - f\left(\frac{\Im z-b^-}{b^+-b^-}\right)\right\vert\\
&=\Re z - \Re f\left(\frac{\Im z-b^-}{b^+-b^-}\right)\\
&\le \Re z -a.
\end{align*}
For some $v\in\mathcal{H}$ with $\Vert v\Vert=1$ we have $(A+iB-z)u=\varepsilon v$, therefore
\[
\langle(A-\Re z)u,u\rangle + i\langle(B-\Im z)u,u\rangle = \varepsilon\langle v,u\rangle
\]
and 
\begin{align}
a<\Re z -\varepsilon\le\langle Au,u\rangle &\le \Re z+\varepsilon,\label{thlab0}\\
\langle Bu,u\rangle &= \Im z+\varepsilon\Im \langle v,u\rangle,\label{thlab1}\\
\Vert(A-\Re z)u\Vert&\le\varepsilon + \Vert(B-\Im z)u\Vert.\label{fixed}
\end{align}
Using Lemma \ref{oldestlem} and \eqref{thlab1}, we obtain
\begin{align*}
\Vert(B-\Im z)u\Vert^2 &= \Vert Bu\Vert^2 - 2\Im z\langle Bu,u\rangle + (\Im z)^2\\
&\le \langle Bu,u\rangle(b^-+b^+)  - b^-b^+- 2\Im z\langle Bu,u\rangle +  (\Im z)^2\\
&= -(\Im z-b^+)(\Im z -b^-) + \varepsilon\Im\langle v,u\rangle(b^++b^- - 2\Im z).
\end{align*}
Now applying \eqref{kato1} with 
\[
\xi=a,\quad\eta=\langle Au,u\rangle\quad\text{and}\quad\zeta=\Vert(A-\eta)u\Vert\le\Vert(A-\Re z)u\Vert + \vert\Re z-\eta\vert,
\]
we obtain
\[
\left(a,\langle Au,u\rangle + \frac{(\Vert(A-\Re z)u\Vert + \vert\Re z-\eta\vert)^2}{\langle Au,u\rangle-a}\right]\cap\sigma(A)\ne\varnothing.
\]
Using \eqref{thlab0} and \eqref{fixed}, we have 
\[
\left(a,\Re z + \varepsilon + \frac{\big(2\varepsilon + \Vert(B-\Im z)u\Vert\big)^2}{\Re z-\varepsilon- a}\right]\cap\sigma(A)\ne\varnothing.
\]
Then, using $(a,b)\subset\rho(A)$ and $\vert\Im z\vert\le\Vert B\Vert$, and the assumption that $\varepsilon<1$,
\begin{align*}
b-\Re z&\le\varepsilon + \frac{4\varepsilon^2 +4\varepsilon\Vert(B-\Im z)u\Vert+ \Vert(B-\Im z)u\Vert^2}{\Re z-\varepsilon- a}\\
&\le\varepsilon + \frac{4\varepsilon +8\Vert B\Vert\varepsilon}{\Re z- a-\varepsilon} - \frac{(\Im z-b^+)(\Im z -b^-)}{\Re z-\varepsilon- a}\\
&\quad + \frac{\varepsilon\vert b^++b^- - 2\Im z\vert}{\Re z- a-\varepsilon}. 
\end{align*}
Combining this estimate with Lemma \ref{oldlem}
\begin{align*}
\frac{\dist(z,\Gamma_{a,b}^{b^-,b^+})^2}{\Re z-a}&\le
\varepsilon + \frac{4\varepsilon +8\Vert B\Vert\varepsilon}{\Re z- a-\varepsilon} - \frac{\varepsilon(\Im z-b^+)(\Im z -b^-)}{(\Re z - a)(\Re z- a-\varepsilon)}\\
&\quad+ \frac{\varepsilon(b^+-b^-)}{\Re z- a-\varepsilon}. 
\end{align*}
From \eqref{assumption} we deduce that
\[
\Re z- a-\varepsilon\ge \dist(z,\Gamma_{a,b}^{b^-,b^+})  - \varepsilon >\frac{\dist(z,\Gamma_{a,b}^{b^-,b^+})}{2}
\]
and hence
\begin{align*}
\frac{\dist(z,\Gamma_{a,b}^{b^-,b^+})^2}{\Re z-a}\le \left(1+2\frac{4+10\Vert B\Vert}{\dist(z,\Gamma_{a,b}^{b^-,b^+})} + \frac{4\Vert B\Vert^2}{\dist(z,\Gamma_{a,b}^{b^-,b^+})^2}\right)\varepsilon.
\end{align*}
Then
\begin{align*}
\dist(z,\Gamma_{a,b}^{b^-,b^+})^4\le \Big(r_{a,b}^2+2r_{a,b}\big(4+10\Vert B\Vert\big) + 4\Vert B\Vert^2\Big)(b-a)\varepsilon
\end{align*}
and therefore
\begin{equation}\label{ep1}
\dist(z,\Gamma_{a,b}^{b^-,b^+})^4\big/s_{a,b} \le \varepsilon.
\end{equation}
It follows from \eqref{ep1} and assumption \eqref{assumption}, that $\text{nul}(A+iB-z)=0$ and that $A+iB-z$ has closed range.
Similarly, $\text{nul}(A-iB-\overline{z})=0$ and $A-iB-\overline{z}$ has closed range. Since 
$\text{def}(A+iB-z)=\text{nul}(A-iB-\overline{z})$ we deduce that $z\in\rho(A+iB-z)$. Furthermore, combining \eqref{ep1} with assumption \eqref{assumption}, we obtain
\begin{align*}
\Vert(A+iB-z)^{-1}\Vert
&\le \frac{\max\left\{s_{a,b},\dist(z,\Gamma_{a,b}^{b^-,b^+})^4,2\dist(z,\Gamma_{a,b}^{b^-,b^+})^3\right\}}{\dist(z,\Gamma_{a,b}^{b^-,b^+})^4}\\
&\le \frac{K_{a,b}}{\dist(z,\Gamma_{a,b}^{b^-,b^+})^4}.
\end{align*}
\end{proof}

\begin{remark}\label{dimensions}Suppose that $a_1<b_1\le a_2< b_2$,
\[
(a_1,b_1)\subset\rho(A),\quad(a_2,b_2)\subset\rho(A)\quad\text{and}\quad\min\{b_1-a_1,b_2-a_2\}>b^+-b^-.
\] 
Let $f_1, g_1$ be as in Definition \ref{gamdef} with $\alpha=a_1$, $\beta=b_1$, $\gamma=\beta^-$ and $\delta=\beta^+$. Let $f_2,g_2$  be as in Definition \ref{gamdef} with $\alpha=a_2$, $\beta=b_2$, $\gamma=\beta^-$ and $\delta=\beta^+$. 
The curves $g_1$ and $f_2$ enclose a region (see Figure 2). It follows, from Theorem \ref{lem1b}, that the dimension of the spectral subspace associated to $\sigma(A+iB)$ and this enclosed region is the same as the dimension of the spectral subspace associated to $\sigma(A)$ and the interval $[b_1,a_2]$.
\begin{figure}[h!]
\centering
\includegraphics[scale=.3]{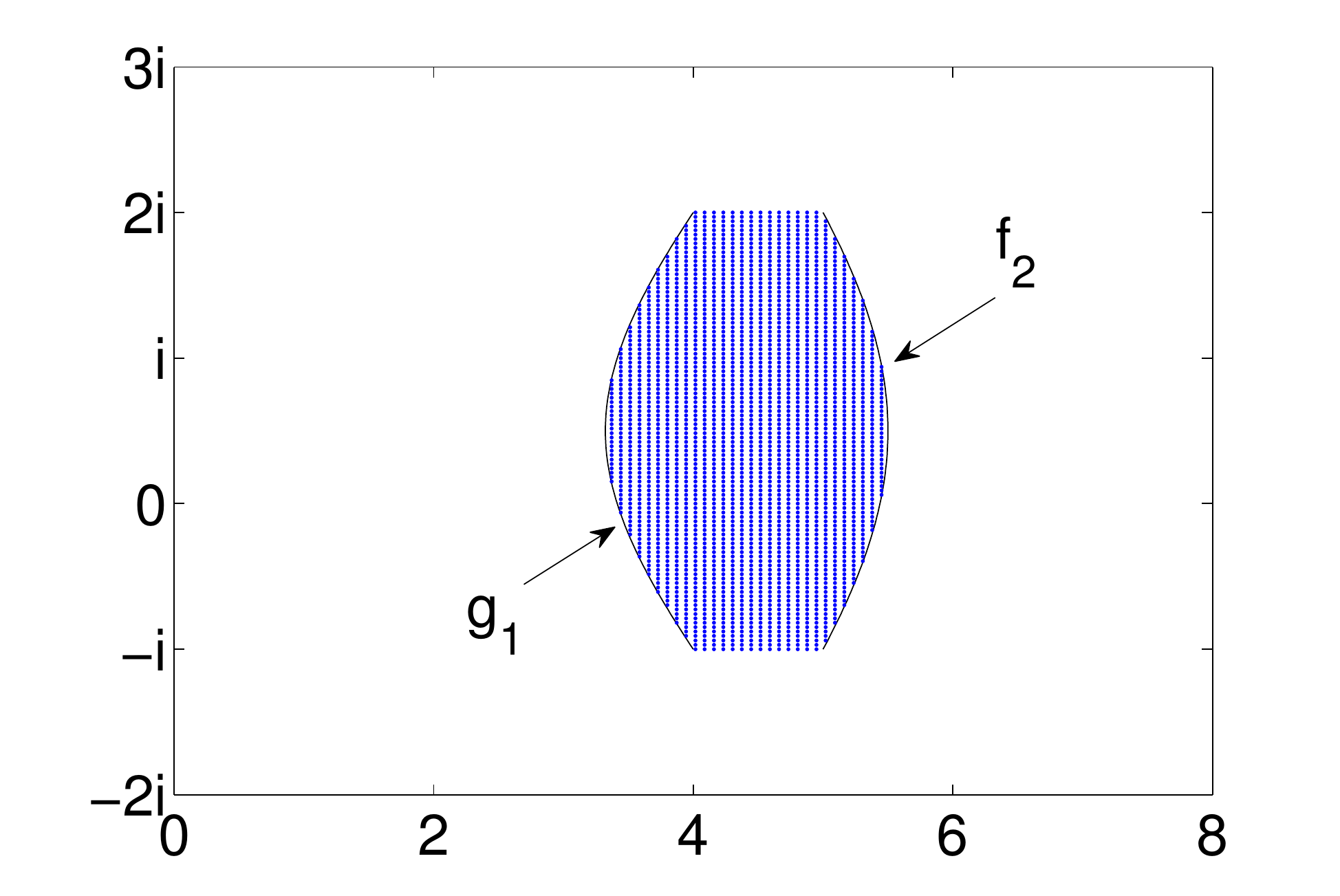}\includegraphics[scale=.3]{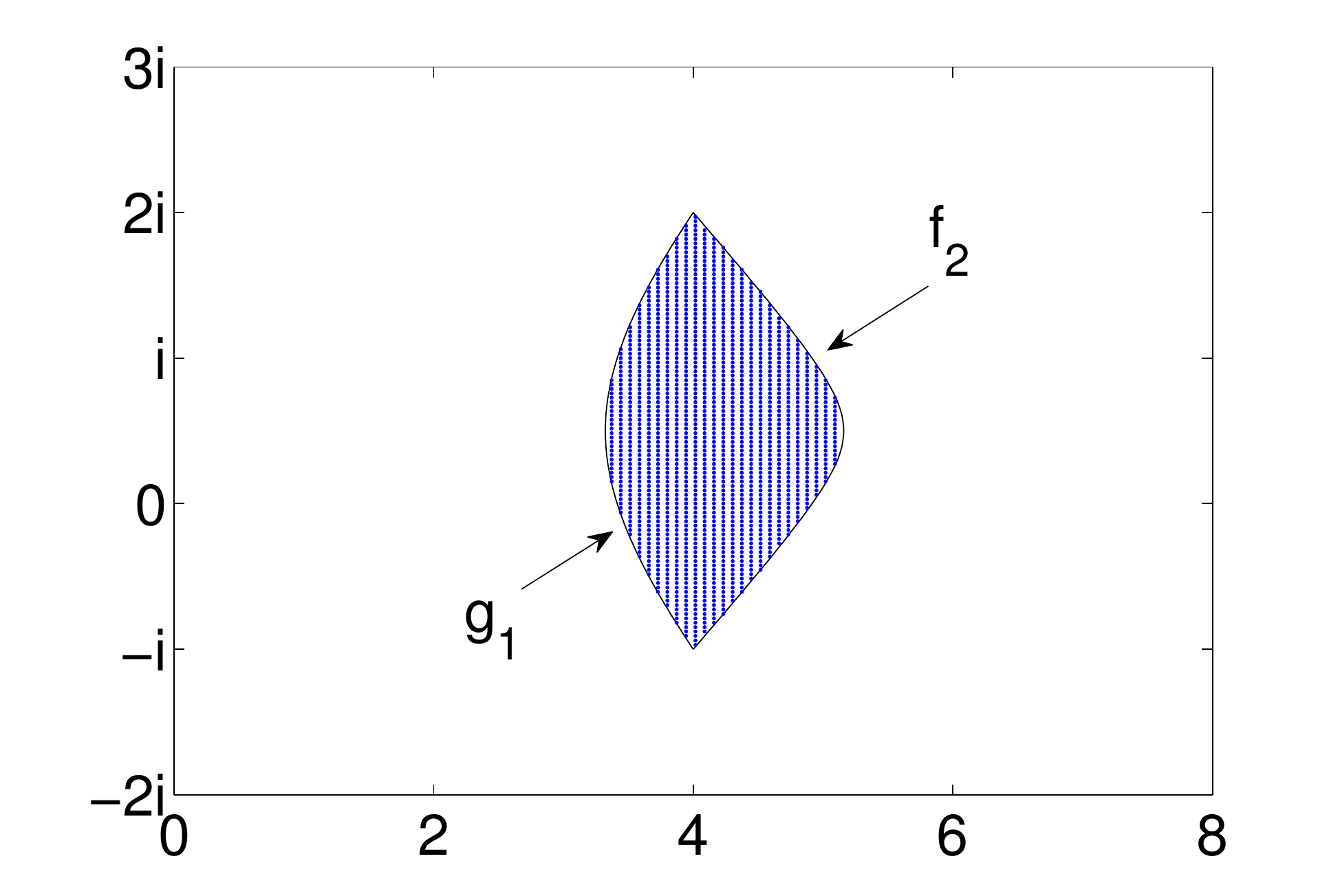}
\caption{The figures show $g_1$, $f_2$ and the shaded region they enclose; on the left $a_1=0$, $b_1 = 4$, $a_2=5$, $b_2=10$ $b^-=-1$ and $b^+=2$; on the right $a_1=0$, $b_1 = 4$, $a_2=4$, $b_2=7.1$, $b^-=-1$ and $b^+=2$.}
\end{figure}
\end{remark}

\begin{definition}\label{W}
We denote the numerical range by $W(\cdot)$, and define 
\begin{align*}
\mathcal{U}_A&:= \bigcup_{\efrac{a,b\in W(A)}{(a,b)\subset\rho(A)}}\mathcal{U}_{a,b}^{b^-,b^+},\\
\mathcal{X}_A&:=\big\{z\in\mathbb{C}:~\Re z\in \overline{W(A)}\hspace{5pt}\textrm{and}\hspace{5pt}b^-\le\Im z\le b^+\big\}\Big\backslash\mathcal{U}_A,\\
\mathcal{V}_B&:= \big\{z\in\mathbb{C}:\Im z + i\Re z\in\hat{\mathcal{V}}_B\big\}\quad\text{where}\quad\hat{\mathcal{V}}_B:=  \bigcup_{\efrac{a,b\in W(B)}{(a,b)\subset\rho(B)}}\mathcal{U}_{a,b}^{a^-,a^+},\\
\mathcal{Y}_B&:=\big\{z\in\mathbb{C}:\Im z\in\overline{W(B)}\hspace{5pt}\textrm{and}\hspace{5pt}a^-\le\Re z\le a^+\big\}\Big\backslash\mathcal{V}_B.
\end{align*}
\end{definition}

\begin{theorem}\label{cor1a}
If $A$ is unbounded from above and/or below, then
\begin{equation}\label{unbinc}
\sigma(A+iB)\subset\mathcal{X}_A.
\end{equation}
If $A$ is bounded, then
\begin{equation}\label{binc}
\sigma(A+iB)\subset\mathcal{X}_A\cap\mathcal{Y}_B.
\end{equation}
\end{theorem}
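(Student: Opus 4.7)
The strategy to prove (\ref{unbinc}) is a two-step reduction: first confine $\sigma(A+iB)$ to the rectangular strip $S_A:=\{z:\Re z\in\overline{W(A)},\ b^-\le\Im z\le b^+\}$ using elementary numerical-range estimates, and then remove the open set $\mathcal{U}_A$ from it via Theorem \ref{lem1b}. For the first step, since $B$ is bounded, $A+iB$ has the same domain as $A$, with adjoint $A-iB-\bar{z}$. Suppose $\Im z>b^+$; then for any unit vector $u\in\Dom(A)$,
\[
\Im\langle(A+iB-z)u,u\rangle=\langle Bu,u\rangle-\Im z\le b^+-\Im z<0,
\]
so $\|(A+iB-z)u\|\ge|\Im z-b^+|$, which gives injectivity and closed range of $A+iB-z$. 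The analogous estimate for the adjoint (with reversed sign) yields injectivity of $(A+iB-z)^*$, hence dense range of $A+iB-z$; hence $z\in\rho(A+iB)$. The cases $\Im z<b^-$, $\Re z<\inf W(A)$, and $\Re z>\sup W(A)$ are handled identically, using $\langle Au,u\rangle-\Re z$ in the latter two. This proves $\sigma(A+iB)\subset S_A$.

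For the second step, fix any pair $a,b\in W(A)$ with $(a,b)\subset\rho(A)$. Theorem \ref{lem1b} then gives $\mathcal{U}_{a,b}^{b^-,b^+}\subset\rho(A+iB)$; taking the union over all such pairs yields $\mathcal{U}_A\subset\rho(A+iB)$. Combined with the first step,
\[
\sigma(A+iB)\subset S_A\setminus\mathcal{U}_A=\mathcal{X}_A,
\]
which is (\ref{unbinc}). For (\ref{binc}) the containment $\sigma(A+iB)\subset\mathcal{X}_A$ is already established, so it remains to show $\sigma(A+iB)\subset\mathcal{Y}_B$. Set $T=A+iB$; then $iT^*=B+iA$ is again of the form (bounded self-adjoint) $+i$(bounded self-adjoint), so the already-proved inclusion, applied to $B+iA$, gives
\[
\sigma(B+iA)\subset\{w:\Re w\in\overline{W(B)},\ a^-\le\Im w\le a^+\}\setminus\hat{\mathcal{V}}_B.
\]
Since $\sigma(iT^*)=i\overline{\sigma(T)}$, we have $z\in\sigma(T)$ iff $i\bar{z}\in\sigma(B+iA)$. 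Using $\Re(i\bar{z})=\Im z$ and $\Im(i\bar{z})=\Re z$, the pullback of the set on the right under $z\mapsto i\bar{z}$ is precisely $\mathcal{Y}_B$ by the very definition of $\mathcal{V}_B$, giving $\sigma(A+iB)\subset\mathcal{Y}_B$.

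The main obstacle is the bookkeeping in the symmetry step: one must verify that under $z\mapsto i\bar{z}$ the set $\hat{\mathcal{V}}_B$ corresponds to the rotated set $\mathcal{V}_B$ built into Definition \ref{W}, and that the $(a^-,a^+)$-constraint on $\Im w$ for $B+iA$ translates into the $(a^-,a^+)$-constraint on $\Re z$ for $\mathcal{Y}_B$. Aside from this set-theoretic care, the proof is essentially immediate from Theorem \ref{lem1b} together with the elementary numerical-range argument; the heavy lifting has already been carried out in the preceding enclosure theorem.
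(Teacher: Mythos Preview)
Your proof is correct and follows essentially the same approach as the paper: the strip containment plus Theorem \ref{lem1b} for $\mathcal{X}_A$, and the rotation $z\mapsto i\bar z$ (equivalently the paper's chain $\rho(B+iA)\Rightarrow\rho(B-iA)\Rightarrow\rho(A+iB)$) to transfer the result to $\mathcal{Y}_B$. The only difference is that you spell out the numerical-range argument for the strip containment $\sigma(A+iB)\subset S_A$, which the paper leaves implicit in the phrase ``follows immediately from Theorem \ref{lem1b}.''
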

\begin{proof}
The first assertion follows immediately from Theorem \ref{lem1b}. Suppose that $A$ is bounded. Then $\sigma(A+iB)\subset\mathcal{X}_A$ again follows from Theorem \ref{lem1b}. Let $w\notin\mathcal{Y}_B$. Then either 
\[
w\notin\big\{z\in\mathbb{C}:\Im z\in\overline{W(B)}\hspace{5pt}\textrm{and}\hspace{5pt}a^-\le\Re z\le a^+\big\}\quad\text{or}\quad
w\in\mathcal{V}_B.
\]
Suppose the former is true, then
\begin{align*}
\Im w + i\Re w\in\rho(B+iA)\quad
&\Rightarrow\quad\Im w - i\Re w\in\rho(B-iA)\\
&\Rightarrow\quad\Re w + i\Im w\in\rho(A+iB).
\end{align*}
Suppose instead that $w\in\mathcal{V}_B$, then for some $a,b\in W(B)$, $(a,b)\subset\rho(B)$, we have
\begin{align*}
\Im w + i\Re w\in\mathcal{U}_{a,b}^{a^-,a^+}\quad&\Rightarrow\quad\Im w + i\Re w\in\rho(B+iA)\\
&\Rightarrow\quad\Re w + i\Im w\in\rho(A+iB).
\end{align*}
\end{proof} 

\begin{remark}
Any bounded linear operator, $T\in\mathcal{B}(\mathcal{H})$, may be expressed as
\[
T = \underbrace{\left(\frac{T+T^*}{2}\right)}_A +~ i\underbrace{\left(\frac{T-T^*}{2i}\right)}_B
\]
where $A$ and $B$ are bounded self-adjoint operators. Hence, Theorem \ref{cor1a} provides an enclosure for the spectrum, in terms of the real and imaginary parts, of any bounded linear operator.
\end{remark}

\void{
\begin{definition}
We denote by $\mathcal{Q}$ the set of all self-adjoint operators, $Q$,  with $\sigma(Q)\subset\sigma(A)$, by $\mathcal{S}$ the set of all bounded self-adjoint operators with $cI\le S\le d$, and by $\mathcal{T}$ the set of all self-adjoint operators, $T$, with $\sigma(T)\subset\sigma(B)$.
\end{definition}

The following Corollary is a straightforward consequence of Theorem \ref{cor1} and Definition \ref{W}.

\begin{corollary}
Let $A$ be possibly unbounded, then
\[\bigcup_{Q\in\mathcal{Q}}\bigcup_{S\in\mathcal{S}}\sigma(Q+iS)\subset\mathcal{X}_A.\]
If $A$ is bounded, then
\[\bigcup_{Q\in\mathcal{Q}}\bigcup_{T\in\mathcal{T}}\sigma(Q+iT)\subset\mathcal{X}_A\cap\mathcal{Z}_B.\]
\end{corollary}
}

\begin{corollary}\label{2eigs}
Let $\sigma(A)=\{a^-,a^+\}$ and $\sigma(B)=\{b^-,b^+\}$, then $\sigma(A+iB)\subset\Gamma_{a^-,a^+}^{b^-,b^+}$. For any $z\in\Gamma_{a^-,a^+}^{b^-,b^+}$, $B$ may be chosen such that $z\in\sigma(A+iB)$.
\end{corollary}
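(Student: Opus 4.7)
The plan for the first assertion is to invoke Theorem \ref{cor1a} and then show, under the two-point spectrum hypothesis, that the enclosure set $\mathcal{X}_A\cap\mathcal{Y}_B$ collapses to the curve $\Gamma_{a^-,a^+}^{b^-,b^+}$. Because $\sigma(A)=\{a^-,a^+\}$, the only component of $\rho(A)$ with endpoints in $W(A)=[a^-,a^+]$ is $(a^-,a^+)$, so the union in Definition \ref{W} collapses to $\mathcal{U}_A=\mathcal{U}_{a^-,a^+}^{b^-,b^+}$; analogously $\hat{\mathcal{V}}_B=\mathcal{U}_{b^-,b^+}^{a^-,a^+}$. Since $A$ is bounded, Theorem \ref{cor1a} yields $\sigma(A+iB)\subset\mathcal{X}_A\cap\mathcal{Y}_B$, reducing the proof to a purely geometric identification.

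For that identification I would apply Lemma \ref{olderlem} twice. With $(\alpha,\beta,\gamma,\delta)=(a^-,a^+,b^-,b^+)$, an interior rectangle point $z\notin\mathcal{U}_A\cup\Gamma_{a^-,a^+}^{b^-,b^+}$ satisfies $(b^+-\Im z)(\Im z-b^-)>(a^+-\Re z)(\Re z-a^-)$. Applying the same lemma to $\mathcal{U}_{b^-,b^+}^{a^-,a^+}$ at the rotated point $w=\Im z+i\Re z$, whose membership in $\mathcal{U}_{b^-,b^+}^{a^-,a^+}$ is precisely $z\in\mathcal{V}_B$, yields the reverse strict inequality when $z\notin\mathcal{V}_B\cup\Gamma_{a^-,a^+}^{b^-,b^+}$; here one checks directly from Definition \ref{gamdef} that the rotation sends $\Gamma_{a^-,a^+}^{b^-,b^+}$ to $\Gamma_{b^-,b^+}^{a^-,a^+}$. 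The two inequalities are incompatible, so any interior point off $\Gamma_{a^-,a^+}^{b^-,b^+}$ must belong to $\mathcal{U}_A\cup\mathcal{V}_B$. A short check of the rectangle boundary (horizontal edges lie in $\mathcal{U}_A$, vertical edges in $\mathcal{V}_B$, corners on $\Gamma_{a^-,a^+}^{b^-,b^+}$) completes the identification $\mathcal{X}_A\cap\mathcal{Y}_B=\Gamma_{a^-,a^+}^{b^-,b^+}$.

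For the sharpness statement, fix $z\in\Gamma_{a^-,a^+}^{b^-,b^+}$ and reduce to a two-dimensional invariant subspace. Write $\mathcal{H}=\mathcal{H}^-\oplus\mathcal{H}^+$ with $A|_{\mathcal{H}^\pm}=a^\pm I$, choose unit vectors $e^\pm\in\mathcal{H}^\pm$, and set $\mathcal{H}_0=\spn\{e^-,e^+\}$, so that $A_0:=A|_{\mathcal{H}_0}$ is the diagonal matrix with entries $a^-,a^+$. I seek a $2\times 2$ self-adjoint $B_0$ on $\mathcal{H}_0$ with diagonal entries $p,q$ and off-diagonal $w$ satisfying $p+q=b^-+b^+$ and $pq-|w|^2=b^-b^+$ (so $\sigma(B_0)=\{b^-,b^+\}$) such that $z$ is an eigenvalue of $A_0+iB_0$. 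Splitting $\det(A_0+iB_0-z)=0$ into real and imaginary parts, the real part reduces exactly to $(\Re z-a^-)(\Re z-a^+)=(\Im z-b^-)(\Im z-b^+)$, which is the defining equation of $\Gamma_{a^-,a^+}^{b^-,b^+}$ and so is automatic; the imaginary part is linear in $p,q$ and, together with the trace condition, determines them uniquely.

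The main obstacle is verifying $|w|^2=pq-b^-b^+\ge 0$. Substituting the parametrisation $z=f(t)$ or $z=g(t)$ from Definition \ref{gamdef} and simplifying should yield $pq-b^-b^+=t(1-t)\big[(a^+-a^-)^2(1-2t)^2+(b^+-b^-)^2\big]\ge 0$, so a valid $B_0$ exists. Finally, extend to $\mathcal{H}$ by setting $B=B_0\oplus B_1$, where $B_1$ is any bounded self-adjoint operator on $\mathcal{H}_0^\perp$ with $\sigma(B_1)\subset\{b^-,b^+\}$ (for instance, $b^-$ times the orthogonal projection onto $\mathcal{H}^-\ominus\mathbb{C}e^-$ plus $b^+$ times the orthogonal projection onto $\mathcal{H}^+\ominus\mathbb{C}e^+$). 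Then $\sigma(B)=\{b^-,b^+\}$ and $z\in\sigma(A_0+iB_0)\subset\sigma(A+iB)$.
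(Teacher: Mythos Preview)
Your argument for the first assertion is correct and is essentially what the paper means by ``follows immediately from theorems \ref{lem1b} and \ref{cor1a}''; you have simply spelled out, via Lemma \ref{olderlem} and the rotation, why $\mathcal{X}_A\cap\mathcal{Y}_B$ collapses to $\Gamma_{a^-,a^+}^{b^-,b^+}$.

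For the sharpness statement you take a genuinely different route. The paper fixes eigenvectors $u,v$ of $A$ and defines a one-parameter family $B(t)$ (a rotation of the eigenbasis of $B$), observes that at $t=0$ and $t=1$ the eigenvalues of $A+iB(t)$ sit at the four corners of the rectangle, and appeals to continuity of eigenvalues together with the containment $\sigma(A+iB(t))\subset\Gamma_{a^-,a^+}^{b^-,b^+}$ to conclude that every point of $\Gamma$ is hit for some $t$. Your approach is instead a direct algebraic solve: given $z\in\Gamma$, you determine $p,q$ from the trace constraint and the imaginary part of the characteristic equation, and then verify $|w|^2=pq-b^-b^+\ge 0$. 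This is more constructive (one obtains $B$ explicitly from $z$) and sidesteps the topological issue of why the continuous path of eigenvalues actually sweeps out all of $\Gamma$, which in the paper relies implicitly on $\Gamma$ being a pair of arcs. Your extension $B=B_0\oplus B_1$ also cleanly handles $\dim\mathcal{H}>2$, where the paper's rank-two $B(t)$ would otherwise have $0$ in its spectrum. One small caveat: the explicit formula you quote, $pq-b^-b^+=t(1-t)\big[(a^+-a^-)^2(1-2t)^2+(b^+-b^-)^2\big]$, comes out exactly this way only in case~(1) of Definition~\ref{gamdef}; in case~(2) the computation yields the different (but still non-negative) expression $\tfrac{(b^+-b^-)^2}{(a^+-a^-)^2}\,t(1-t)\big[(a^+-a^-)^2+(b^+-b^-)^2(1-2t)^2\big]$. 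Since only non-negativity matters, this does not affect the validity of your argument.
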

\begin{proof}
The first assertion follows immediately from theorems \ref{lem1b} and \ref{cor1a}. Let $u,v$ be normalised eigenvectors with $Au=a^-u$ and $Av=a^+v$. For $t\in[0,1]$ we define the family of self-adjoint operators
\begin{align*}
B(t)x &= b^-\langle x,\sqrt{1-t}u + \sqrt{t}v\rangle(\sqrt{1-t}u + \sqrt{t}v)\\
&\quad + b^+\langle x,\sqrt{t}u-\sqrt{1-t}v\rangle(\sqrt{t}u-\sqrt{1-t}v).
\end{align*}
Evidently, 
\[
\min\sigma(B(t))=b^-\quad\text{and}\quad\max\sigma(B(t))=b^+\quad\forall t\in[0,1].
\]
Furthermore,
\begin{align*}
Au+iB(0)u &= (a^-+ ib^-)u,\quad Av+iB(0)v= (a^+ + ib^+)v,\\
Au+iB(1)u &= (a^-+ ib^+)u,\quad Av+iB(1)v = (a^++ ib^-)v.
\end{align*}
Hence, for each $z\in\Gamma_{a^-,a^+}^{b^-,b^+}$ there exists a $t\in[0,1]$ for which $z\in\sigma(A+iB(t))$.
\end{proof}

\void{
\begin{corollary}\label{rem2}
Let $A$ and $B$ be a $2\times 2$ matrices.
Then $\sigma(A+iB)$ consists of one eigenvalue only if $b-a=d-c$. If $A+iB$ has only one eigenvalue then it is equal to $(a+b)/2 + i(c+d)/2$. If $b-a\ne d-c$, then $A+iB$ has and one eigenvalue 
on the curve $f_{a,b}^{c,d}$ and another on the curve $g_{a,b}^{c,d}$. 
\end{corollary}
\begin{proof}
For the first and third assertions we assume that $b-a>d-c$, the case where $d-c>b-a$ may be treated similarly. $T(s)=A+isB$.
Then $T(0)=A$ and $T(1)=M$. Hence, $\sigma(T(0))=\{a,b\}$. Furthermore, $b-a>sd-sc$ for each $s\in[0,1]$ and therefore the eigenvalues of $T(s)$, which by Corollary \ref{2eigs} must lie on the corresponding curves $f_{a,b}^{sc,sd}$ and $g_{a,b}^{sc,sd}$, cannot coincide for any $s\in[0,1]$. The first and third assertions follow. For the second assertion, suppose that $M$ has a double eigenvalue $\lambda\ne(a+b)/2 + i(c+d)/2$. Without loss of generality we assume that $\lambda$ lies  on $f_{a,b}^{sc,sd}$ and therefore not on $g_{a,b}^{sc,sd}$. For any sufficiently small $\varepsilon>0$,
\[\sigma(T(1+\varepsilon))\subset f_{a,b}^{sc,sd}\quad\text{where}s=1+\varepsilon\quad\text{and}\quad b-a<sd-sc,\]
which is a contradiction.
\end{proof}

\begin{example}\label{ex1}
Let
\[
A = \frac{1}{2}\left(
\begin{array}{cc}
2 & 1\\
1 & 2
\end{array}
\right)
\quad\text{and}\quad
B = \frac{s}{2i}\left(
\begin{array}{cc}
0 & 1\\
-1 & 0
\end{array}
\right)\quad\text{where}\quad s\in\mathbb{R}.
\]
Then $a=1/2$, $b=3/2$, $c=-\vert s\vert/2$ and $d=\vert s\vert/2$. Let $\sigma(A+iB)=\{\lambda_1,\lambda_2\}$, then $b-a = d-c$ only when $s=-1$ or $s=1$. It follows, from Corollary \ref{rem2}, that $\lambda_1=\lambda_2$ is permitted only if $s=-1$ or $s=1$. When $s=-1$
\[
A+iB = \left(
\begin{array}{cc}
1 & 0\\
1 & 1
\end{array}
\right)\quad\text{and}\quad\sigma(A+iB)=\{1\}
\]
When $s=1$
\[
A+iB = \left(
\begin{array}{cc}
1 & 1\\
0 & 1
\end{array}
\right) \quad\text{and}\quad\sigma(A+iB)=\{1\}.
\]
\end{example}

\begin{definition}\label{WX}
We define 
\begin{align*}
\Gamma_A&:= \left(\bigcup_{\efrac{(\alpha,\beta)\in W(A)}{\alpha,\beta\subset\rho(A)}}\Gamma_{\alpha,\beta}^{c,d}\right)\bigcup \Big(\sigma(A)-ci\Big)\bigcup \Big(\sigma(A)+di\Big),\\
\Gamma_B&:=\big\{z\in\mathbb{C}:\Im z + i\Re z\in\hat{\Gamma}_B\big\}\quad\text{where}\\
\hat{\Gamma}_B&:=\left(\bigcup_{\efrac{(\alpha,\beta)\in W(B)}{\alpha,\beta\subset\rho(B)}}\Gamma_{\alpha,\beta}^{a,b}\right)\bigcup \Big(\sigma(B)-ai\Big)\bigcup \Big(\sigma(B)+bi\Big).
\end{align*}
\end{definition}

Evidently, $\Gamma_A$ and $\Gamma_B$ are the boundaries of $\mathcal{X}_A$ and $\mathcal{Z}_B$, respectively. 

\begin{corollary}\label{cor2}
Let $A$ be unbounded and $z\in\Gamma_A$. There exist $Q\in\mathcal{Q}$ and $S\in\mathcal{S}$ such that $z\in\sigma(Q+iS)$.
Let $A$ be bounded and $z\in\Gamma_A\cap\Gamma_B$. There exist $Q\in\mathcal{Q}$ and $T\in\mathcal{T}$ with $z\in\sigma(S+iT)$.
\end{corollary}

\begin{lemma}\label{aux}
Let $\sigma(A)\subset[\alpha,\beta]$ and $\vert z - (\alpha+\beta)/2\vert>(\beta-\alpha)/2$. There exists a constant $c>0$ such that $\vert\langle(A-z)^2u,u\rangle\vert\ge c\Vert u\Vert^2$ for all $u\in\mathcal{H}$.
\end{lemma}
\begin{proof}
We argue similarly to the proof of \cite[Theorem 3.1]{shar}. If $z\in\mathbb{R}$, then we may take $c = \min\{\vert c-\alpha\vert,\vert c-\beta\vert\}$. If $\Im z\ne 0$, then, since $z$ lies outside the disc with centre $(\alpha+\beta)/2$ and radius $(\beta-\alpha)/2$, the closed and bounded set
\[
\mathcal{S}:=\{(\lambda-z)^2:\lambda\in\sigma(A)\}
\]
lies in an open half plane. Hence there exist a $c>0$ and a $\theta\in[0,\pi)$ such that
\[
c\le\min\{\Re e^{i\theta}w:w\in\mathcal{S}\}.
\]
It follows that for some we have
\begin{align*}
\vert\langle(A-z)^2u,u\rangle\vert &= \int_{\sigma(A)}e^{i\theta}(\lambda-z)^2~d\langle E_\lambda u,u\rangle\ge c\Vert u\Vert^2.
\end{align*}
\end{proof}

\begin{theorem}\label{thm2}
Let $\sigma(A)\subset[\alpha,\beta]$ and $\sigma(B)=\{\nu,\mu\}$, then
$\sigma(A+iB)\subset\mathcal{U}_{\alpha,\beta}^{\nu,\mu}\cup\Gamma_{\alpha,\beta}^{\nu,\mu}$.
\end{theorem}
\begin{proof}
Let $z\notin\mathcal{U}_{\alpha,\beta}^{\nu,\mu}\cup\Gamma_{\alpha,\beta}^{\nu,\mu}$ with $\Re z < \alpha\le\beta$ and $\nu\le\Im z\le\mu$. By Lemma \ref{olderlem}
\begin{equation}\label{fin1}
\Re z  -  \frac{(\Im z-\mu)(\Im z-\nu)}{\Re z- \alpha} > \beta.
\end{equation}
Let $\Vert(A+iB-z)u_n\Vert=\varepsilon_n\to 0$ where $(u_n)_{n\in\mathbb{N}}$ is a sequence of normalised vectors. There exists a sequence of normalised vectors  $(v_n)$ with  $(A+iB-z)u_n=\varepsilon_n v_n$. Similarly to the proof Theorem \ref{lem1b}, we obtain
\begin{align*}
&\langle(A-\Re z)u_n,u_n\rangle = \varepsilon_n\Re \langle v_n,u_n\rangle,\quad
 \langle(B-\Im z)u,u\rangle = \varepsilon\Im \langle v,u\rangle,\\
\Vert&(A-\Re z)u_n\Vert^2 = \varepsilon_n^2 + 2\varepsilon_n\Im\langle(B-\Im z)u_n,v_n\rangle + \Vert(B-\Im z)u_n\Vert^2,
\end{align*}
and, in view of Remark \ref{rem},
\begin{align*}
\Vert(B-\Im z)u_n\Vert^2 &= -(\Im z-\mu)(\Im z -\nu) + \varepsilon_n\Im\langle v_n,u_n\rangle(\mu+\nu - 2\Im z).
\end{align*}
Set $w=\sqrt{(\mu-\Im z)(\Im z -\nu)}$. Then
\begin{align*}
\langle(A-\Re z- iw)^2u_n,u_n\rangle &= \Vert(A-\Re z)u_n\Vert^2 - 2iw\langle(A-\Re z)u_n,u_n\rangle - w^2 \to 0.
\end{align*}
We deduce, from Lemma \ref{aux}, that $\vert \Re z+iw - (\alpha+\beta)/2\vert\le(\beta-\alpha)/2$ and hence
\begin{equation}\label{fin2}
\Re z - \frac{(\Im z-\mu)(\Im z -\nu)}{\Re z- \alpha}  = \Re z + \frac{w^2}{\Re z- \alpha} \le\beta
\end{equation}
since the left hand side is the real number $>\alpha$ that intersects the circle which passes through $\alpha$, $\Re z\pm iw$ and whose centre is real. Since \eqref{fin2} contradicts \eqref{fin1}, we deduce that $z\in\rho(A+iB)$. The case where $\Re z = \alpha$ may be proved by applying the above argument to $-A$.
\end{proof}
}
\begin{example}
Let $\sigma(A)=\{-1,0,2\}$ and $\sigma(B)=\{-s,0,s\}$ where $s\in\mathbb{R}$. By Theorem \ref{cor1a} we have $\sigma(A+iB)\subset\mathcal{X}_A\cap\mathcal{Y}_B$. For varying values of $s\in\mathbb{R}$, Figures 3--5 show the region(s) enclosed by $\mathcal{X}_A\cap\mathcal{Y}_B$. Also shown is $\sigma(A+iB)$ for 1000 randomly generated $3\times 3$ matrices $A$ and $B$ where $\sigma(A)=\{-1,0,2\}$ and $\sigma(B)=\{-s,0,s\}$.
\begin{figure}[h!]
\centering
\includegraphics[scale=.3]{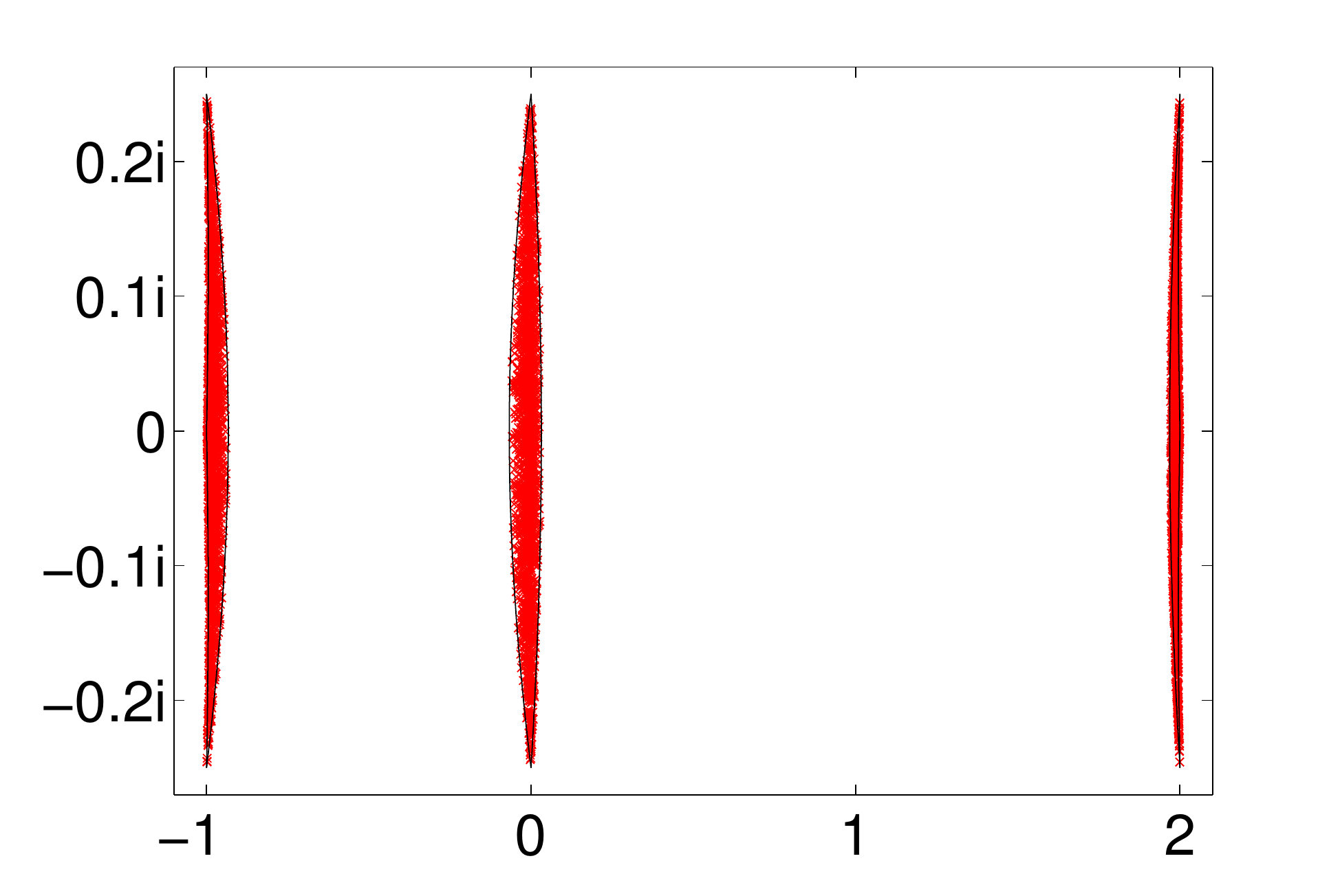}\includegraphics[scale=.3]{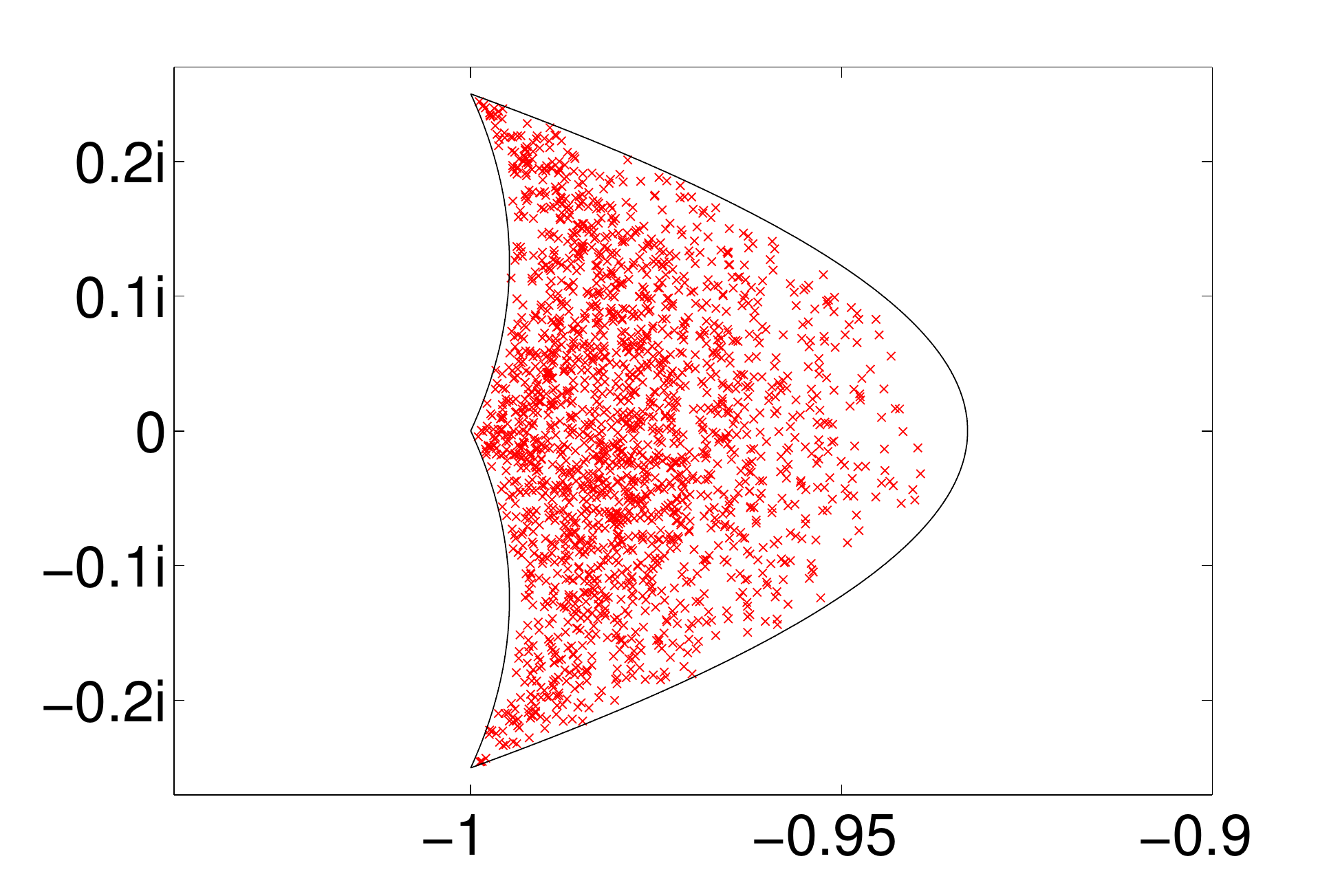}
\includegraphics[scale=.3]{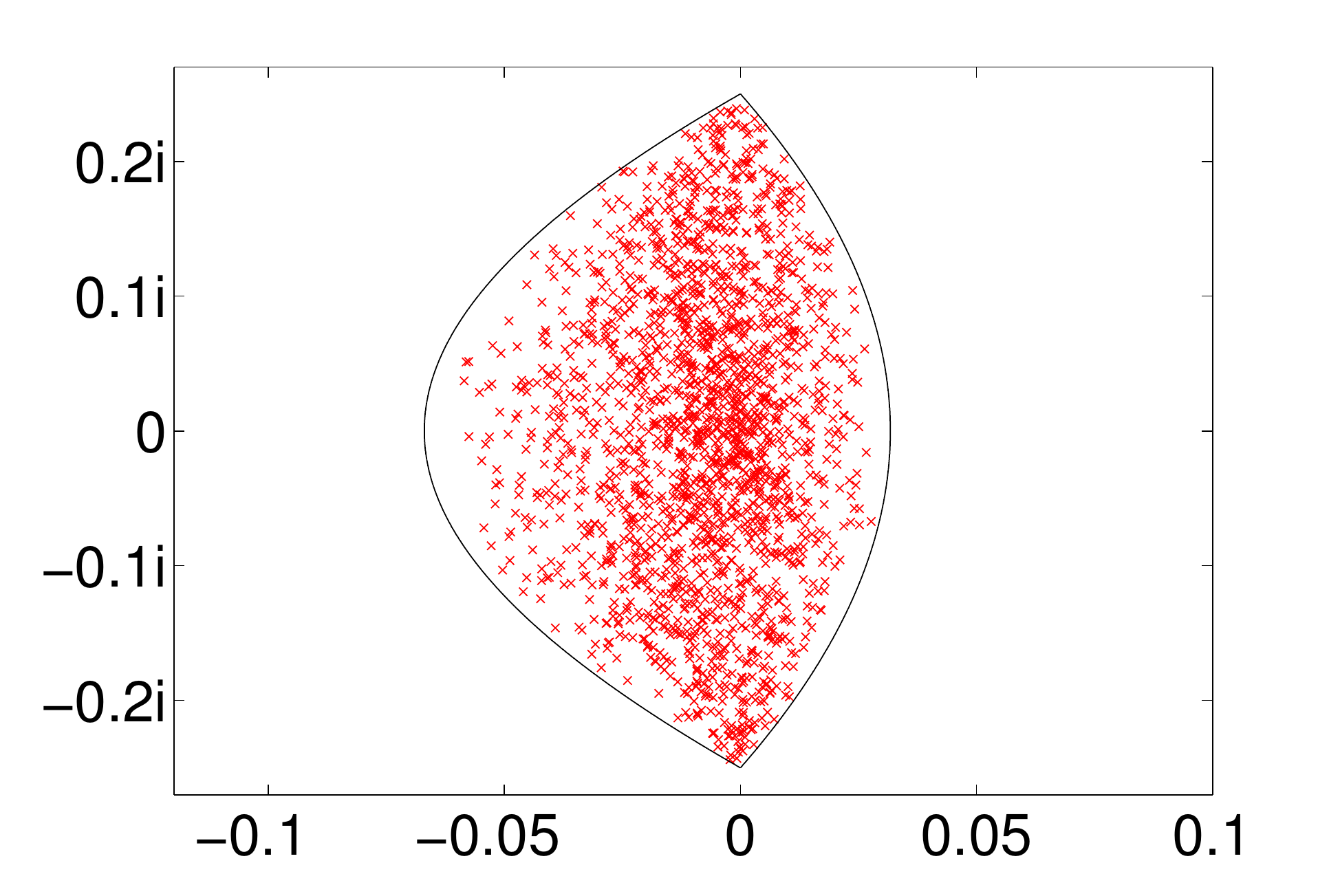}\includegraphics[scale=.3]{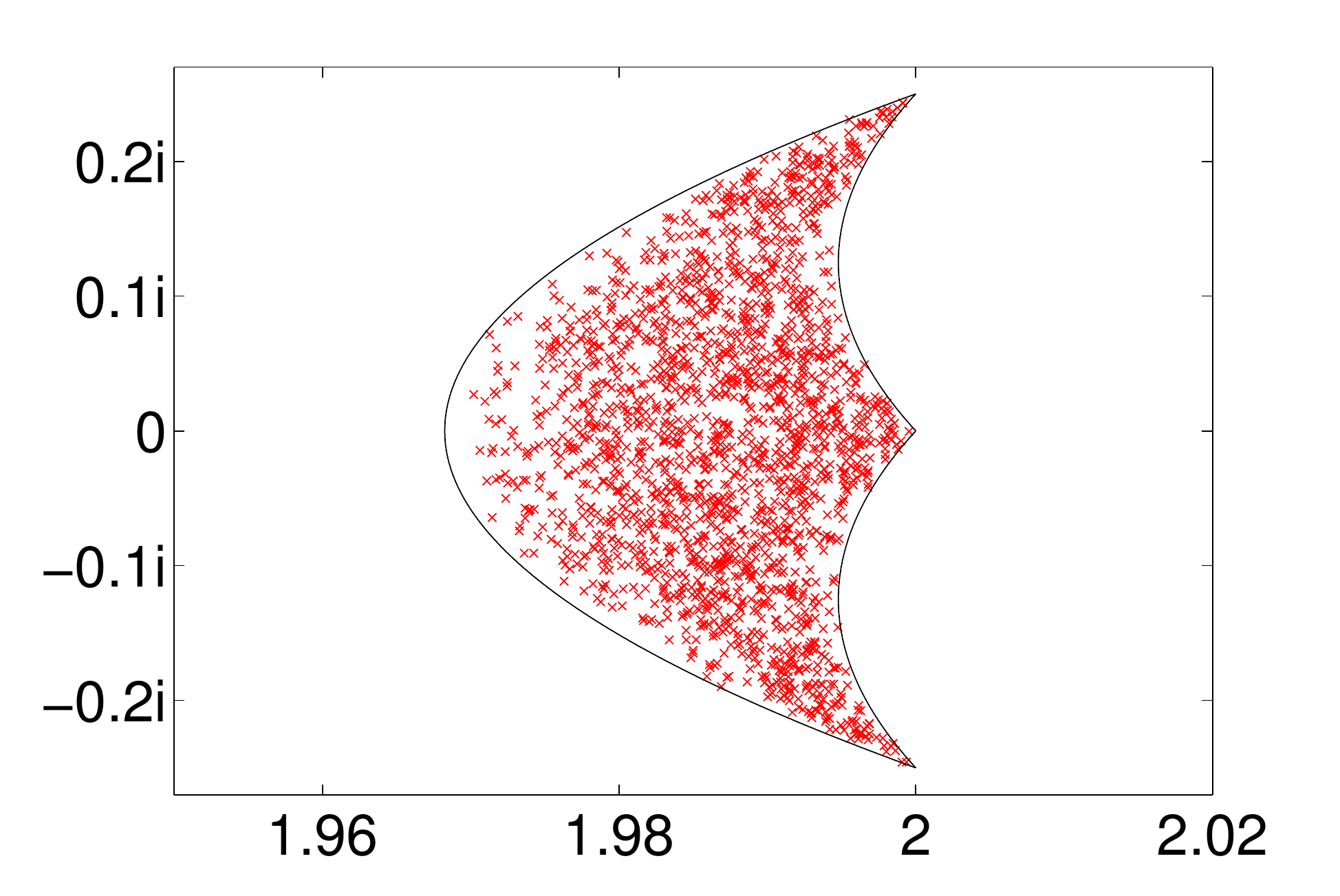}
\caption{With $s=0.25$, $\mathcal{X}_A\cap\mathcal{Y}_B$ consists of 3 disjoint regions which are shown top left. Also shown are the three regions in more detail. The red dots are $\sigma(A+iB)$ for 1000 randomly generated $3\times 3$ matrices $A$ and $B$ where $\sigma(A)=\{-1,0,2\}$ and $\sigma(B)=\{-s,0,s\}$.
}
\end{figure}
\begin{figure}[h!]
\centering
\includegraphics[scale=.3]{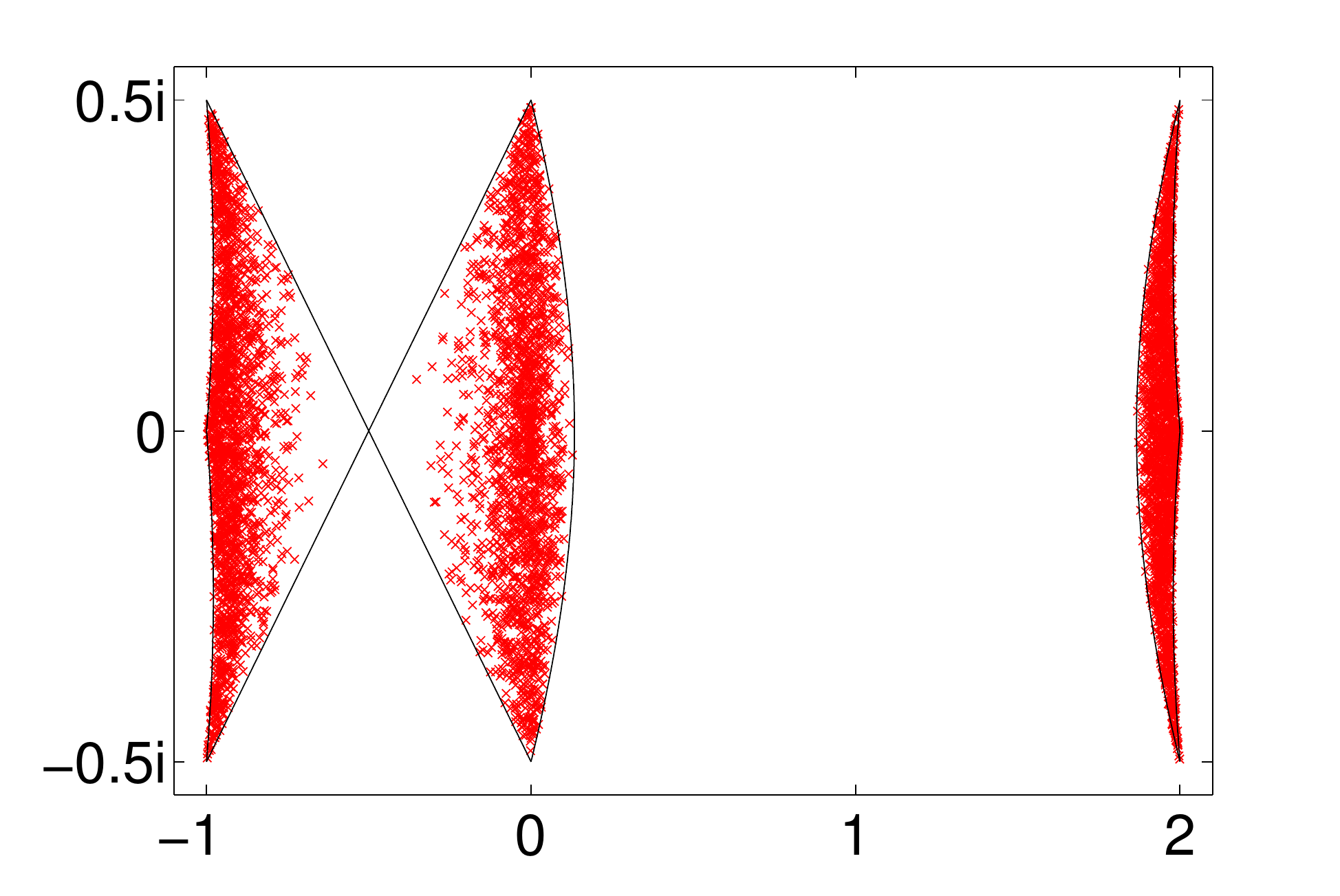}\includegraphics[scale=.3]{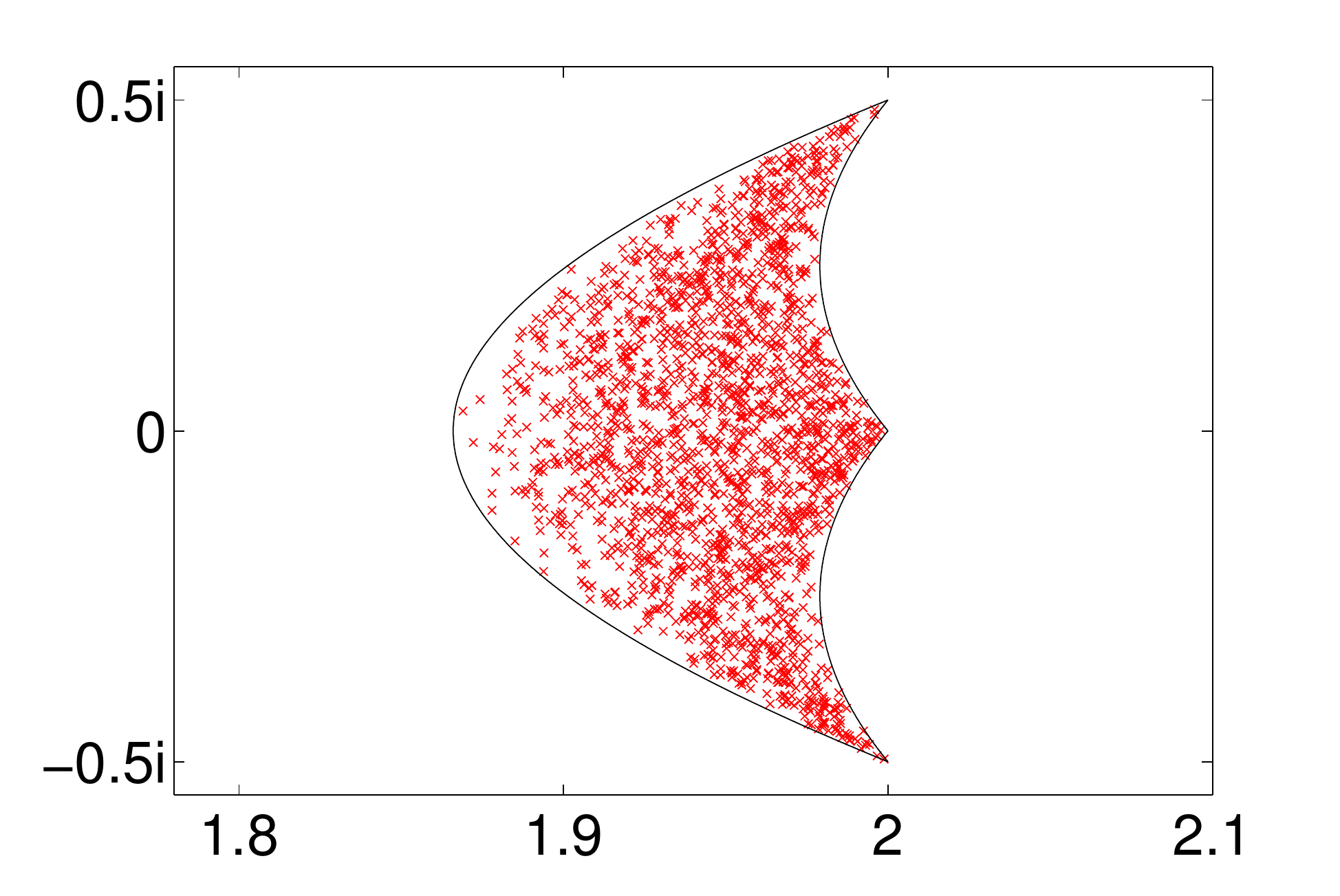}
\caption{With $s=0.5$, the region $\mathcal{X}_A\cap\mathcal{Y}_B$ now consists of 2 disjoint regions; the first two regions on the top left of Figure 3 have merged into one.  The red dots are $\sigma(A+iB)$ for 1000 randomly generated $3\times 3$ matrices $A$ and $B$ where $\sigma(A)=\{-1,0,2\}$ and $\sigma(B)=\{-s,0,s\}$.}
\end{figure}
\begin{figure}[h!]
\centering
\includegraphics[scale=.3]{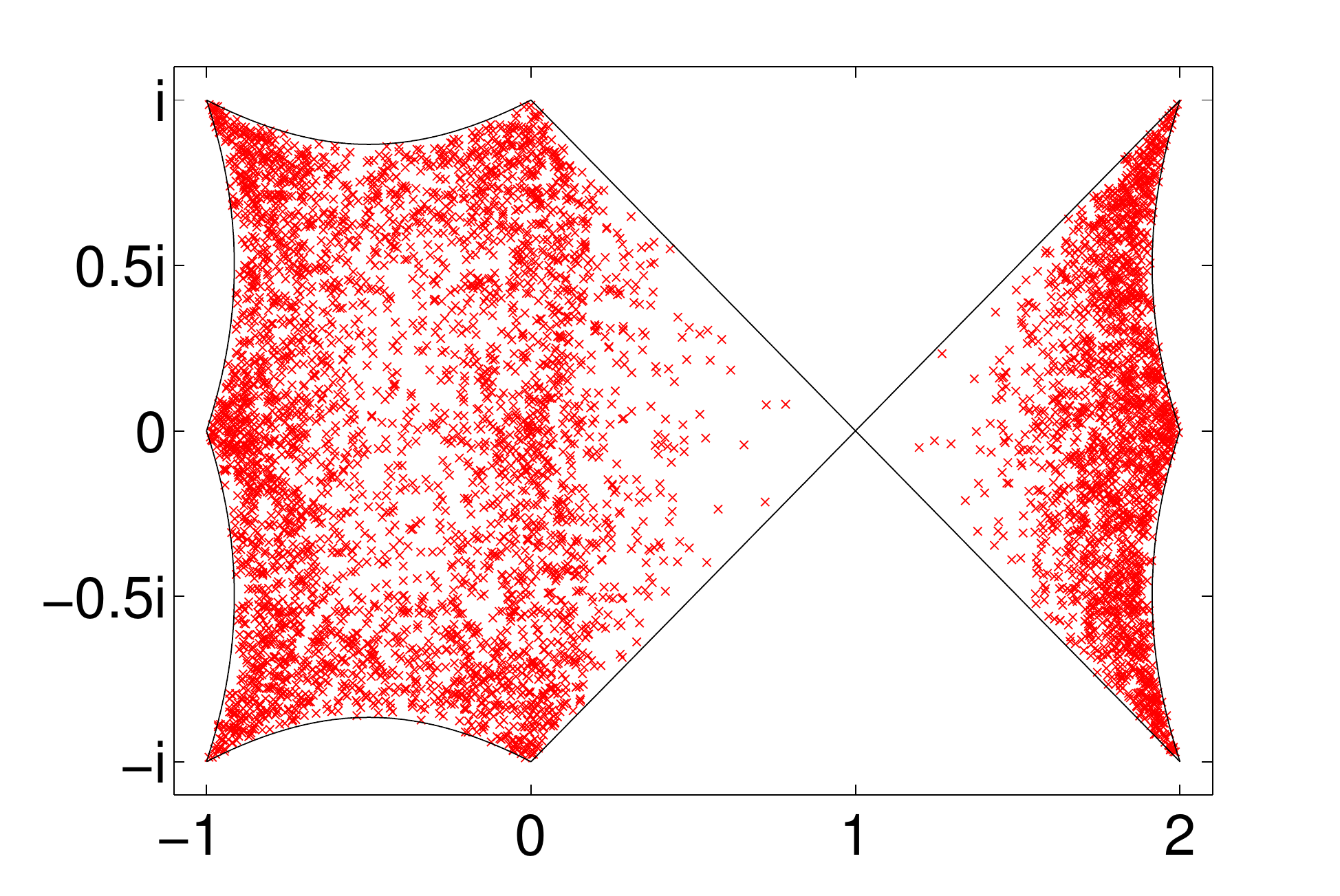}\includegraphics[scale=.3]{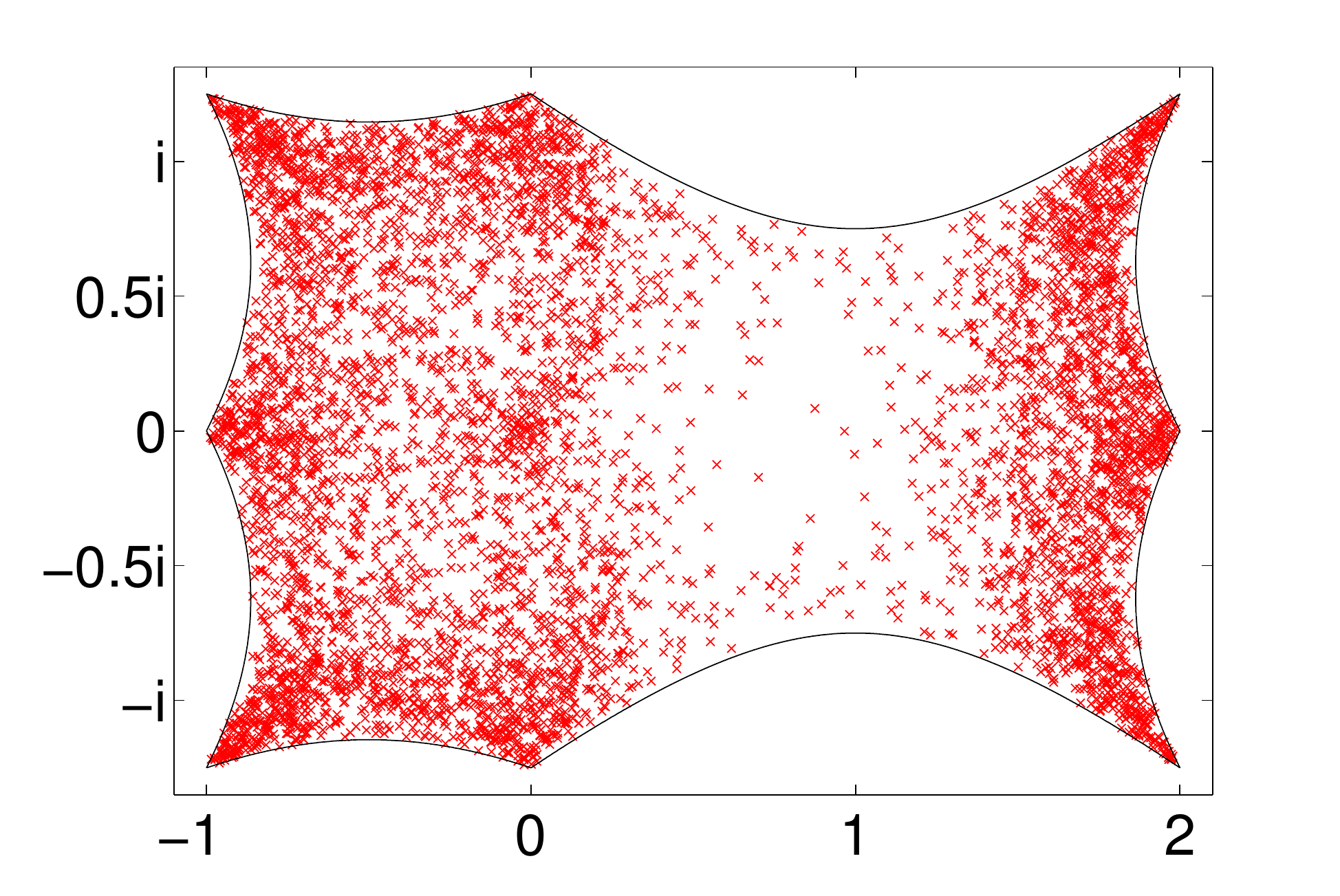}
\includegraphics[scale=.3]{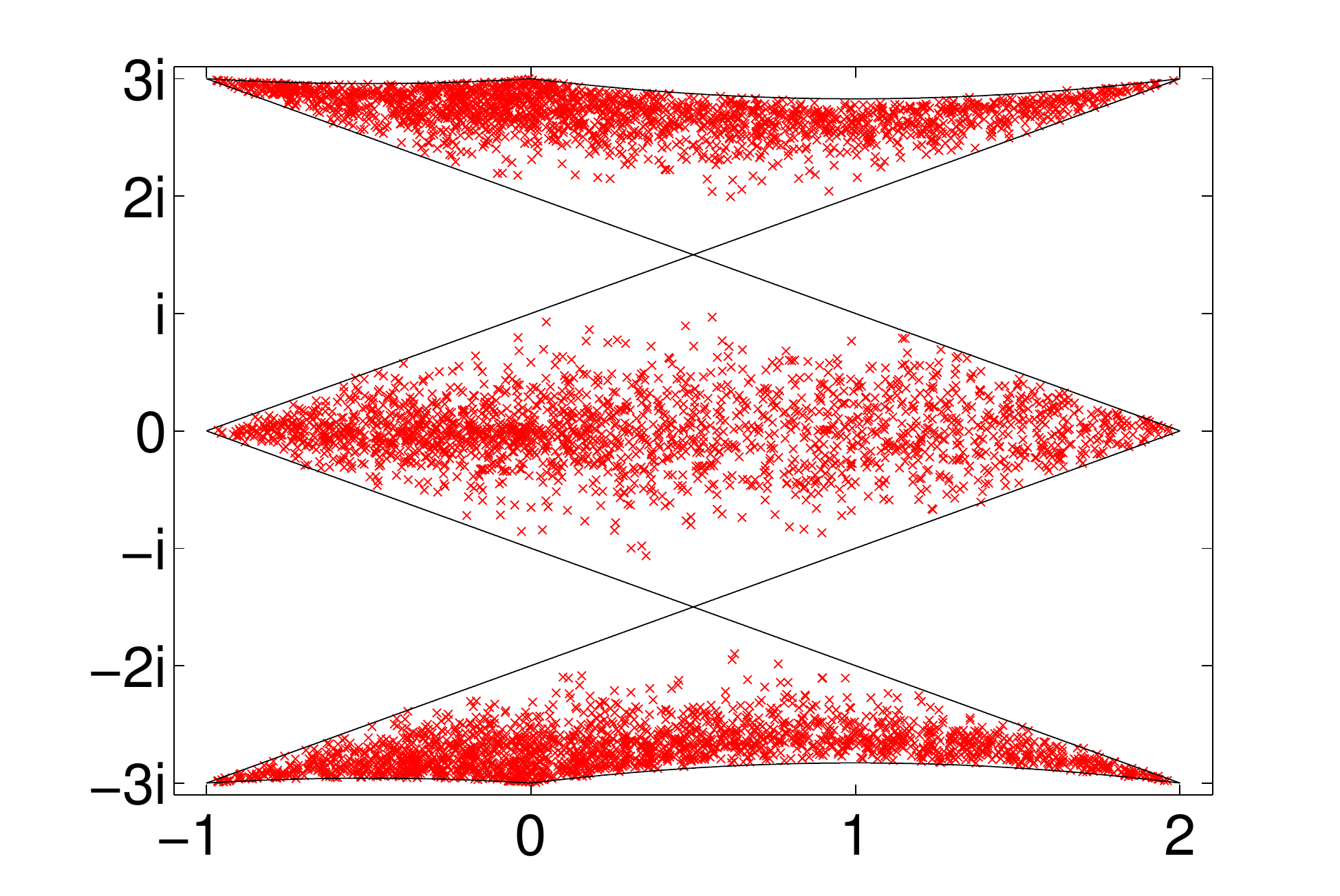}\includegraphics[scale=.3]{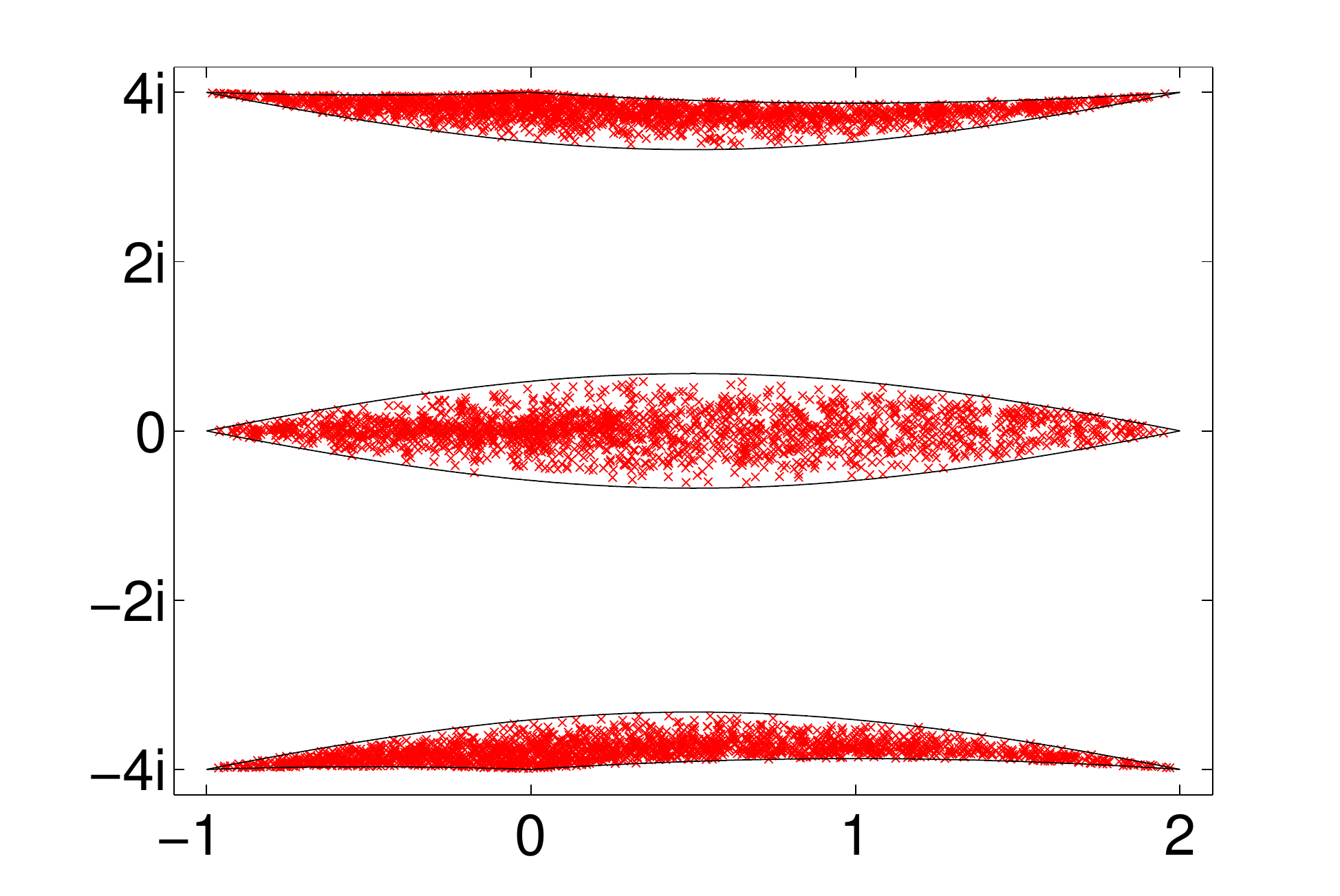}
\caption{Clockwise from top left $s=1,1.25,4,3$.  The red dots are $\sigma(A+iB)$ for 1000 randomly generated $3\times 3$ matrices $A$ and $B$ where $\sigma(A)=\{-1,0,2\}$ and $\sigma(B)=\{-s,0,s\}$.}
\end{figure}
\end{example}

\void{
\begin{example}
Let $\mathcal{H}=L^2(0,\pi)$, $A=-d^2/dx^2$ with homogeneous Dirichlet boundary conditions, and $-I\le B\le 5I$. Then, $\sigma(A)=\{n^2: n\in\mathbb{N}\}$ and Figure 6 shows the region $\mathcal{X}_A$.
\begin{figure}[h!]
\centering
\includegraphics[scale=.325]{may22pic1.eps}\includegraphics[scale=.325]{may22pic2.eps}
\caption{On the left are the first three regions enclosed of $\mathcal{X}_A$. On the right are the first nine regions enclosed of $\mathcal{X}_A$.}
\end{figure}
\end{example}
}

\void{
\section{Galerkin and Quadratic methods}
The Galerkin eigenvalues of $A$ with respect to a finite-dimensional trial space $\mathcal{L}\subset\Dom(\frak{a})$, denoted $\sigma(A,\mathcal{L})$, consists of those $\mu\in\mathbb{C}$ for which $\exists u\in\mathcal{L}\backslash\{0\}$ with
\[
\frak{a}(u,v) = \mu\langle u,v\rangle\quad\forall v\in\mathcal{L}.
\]
Unless stated otherwise, $(\cL_n)_{n\in\mathbb{N}}\subset\Dom(\frak{a})$ will be a sequence of finite-dimensional trial spaces with corresponding sequence of orthogonal projections $(P_n)$. We shall always assume that:
\begin{equation}\label{subspaces}
\forall u\in\Dom(\frak{a})\quad\exists u_n\in\cL_n:\quad\Vert u-u_n\Vert_{\frak{a}}\to0
\end{equation}
where $\Vert\cdot\Vert_{\frak{a}}$ is the norm associated to the Hilbert space $\mathcal{H}_\frak{a}$ with inner-product
\[
\langle u,v\rangle_{\frak{a}}:=\frak{a}(u,v) - (m-1)\langle u,v\rangle\quad\forall u,v\in\Dom(\frak{a})\quad\textrm{where}\quad m=\min\sigma(A).
\]
The distance from a subspace $\mathcal{M}\subset\mathcal{H}$ to another subspace $\mathcal{N}\subset\mathcal{H}$ is defined as
\[
\delta(\mathcal{M},\mathcal{N}) = \sup_{u\in\mathcal{M},\Vert u\Vert=1}\dist(u,\mathcal{N}),
\]
the gap between the two subspaces is
\[
\hat{\delta}(\mathcal{M},\mathcal{N}) = \max\big\{\delta(\mathcal{M},\mathcal{N}),\delta(\mathcal{N},\mathcal{M})\big\};
\]
see \cite[Section IV.2.1]{katopert} for further details. We shall write $\delta_{\frak{a}}$ and $\hat{\delta}_{\frak{a}}$ to indicate the distance and gap between subspaces of $\mathcal{H}_\frak{a}$. 
For trial spaces satisfying \eqref{subspaces} the Galerkin method is an extremely powerful tool for approximating those eigenvalues which lie below the essential spectrum; see for example \cite{chat}. It is well-known that
\begin{equation}\label{galerkin1}
\left(\lim_{n\to\infty}\sigma(A,\mathcal{L}_n)\right)\cap\big(-\infty,\min\sigma_{\ess}(A)\big) = \sigma(A)\cap\big(-\infty,\min\sigma_{\ess}(A)\big).
\end{equation}
Furthermore, for an eigenvalue $\lambda<\min\sigma_{\ess}(A)$ with eigenspace $\mathcal{L}(\{\lambda\})$, we have the \emph{superconvergence} property
\begin{equation}\label{galerkin2}
\dist\big(\lambda,\sigma(A,\mathcal{L}_n)\big) = \mathcal{O}\big(\delta_{\frak{a}}(\mathcal{L}(\{\lambda\}),\mathcal{L}_n)^2\big).
\end{equation}

In general, the Galerkin method cannot be relied upon for approximating eigenvalues above $\min\sigma_{\ess}(A)$. This is due to a phenomenon known as spectral pollution which is the presence of sequences of Galerkin eigenvalues which converge to points in $\rho(A)$. A typical situation is $\min\sigma_{\ess}(A)\le\alpha<\beta$, $(\alpha,\beta)\cap\sigma_{\ess}(A) = \varnothing$, and
\[
\left(\lim_{n\to\infty}\sigma(A,\mathcal{L}_n)\right)\cap(\alpha,\beta) = (\alpha,\beta).
\]
Hence, any approximation of $\sigma_{\dis}\cap(\alpha,\beta)$ is lost within an increasingly dense \emph{fog} of spurious Galerkin eigenvalues; see examples \ref{uber2} \& \ref{magneto} and \cite{boff,boff2,bost,dasu,DP,lesh,rapp}. Although this means that a direct application of the Galerkin method often fails to identify eigenvalues, in view of \eqref{galerkin1} and \eqref{galerkin2}, there is every reason to suppose that eigenvalues above $\min\sigma_{\ess}(A)$ could, in principle, be approximated with a superconvergent technique using trial spaces satisfying only  \eqref{subspaces}. The absence of such a technique has resulted in the development of quadratic methods.

The quadratic methods are so-called because of their reliance on truncations of the square of the operator in question; the Galerkin method relies only on the quadratic form. They have been studied and applied extensively over the last two decades. The quadratic method which has received the most attention is the second order relative spectrum. This is because it can be applied without \'a priori information and it was widely thought to approximate the whole spectrum of an arbitrary self-adjoint operator. The latter has recently been shown to be false; see \cite{shar2}. However, it is known that the method will reliably approximate the discrete spectrum of a self-adjoint operator and part of the discrete spectrum of a normal operator; see \cite{bo} and \cite{me3}, respectively.
The appeal of quadratic methods is that they can approximate eigenvalues without interference from spectral pollution, in fact, they can even provide enclosures for eigenvalues. The latter is often regarded as a major selling point of these methods. In practice though, we are more likely to be interested in accuracy and convergence rather than enclosures.

A drawback of quadratic methods is that they require trial spaces to belong to the operator domain. From a computational perspective this can be highly awkward as typically FEM software will not support the operator domain. Particularly inconvenient, is that for a second order differential operator we cannot use the standard FEM space of piecewise linear trial functions. Furthermore, it is straightforward to show that \eqref{subspaces}, with the added condition $\mathcal{L}_n\subset\Dom(A)$ $\forall n\in\mathbb{N}$, is not sufficient to ensure approximation of $\sigma_{\dis}(A)$. A sufficient condition is
\begin{equation}\label{subspaces1}
\forall u\in\Dom(A)\quad\exists u_n\in\cL_n:\quad\Vert u-u_n\Vert_A\to0;
\end{equation}
see \cite[Corollary 3.6]{bost}. With \eqref{subspaces1} satisfied, we have for each $\lambda\in\sigma_{\dis}(A)$ an element $z_n$ belonging to the second order spectrum of $A$ relative to $\mathcal{L}_n$ with
\begin{equation}\label{spec2con}
\vert\lambda-z_n\vert = \mathcal{O}\big(\delta_A(\mathcal{L}(\{\lambda\}),\mathcal{L}_n)\big)\quad\textrm{and}\quad\vert\lambda-\Re z_n\vert = \mathcal{O}\big(\delta_A(\mathcal{L}(\{\lambda\}),\mathcal{L}_n)^2\big)
\end{equation}
where $\delta_{A}(\mathcal{L}(\{\lambda\}),\mathcal{L}_n)$ is the distance from the eigenspace $\mathcal{L}(\{\lambda\})$ to the trial space $\mathcal{L}_n$ with respect to the graph norm; see \cite[Section 6]{me3}. That the convergence rate is measured in terms of the graph norm means that the convergence, and therefore the accuracy, of this method can be poor when compared to the superconvergence of the Galerkin method; see Example \ref{diracex} and \cite[Example 3.5 \& 4.3]{bost}. Convergence rates analogous to the right hand side of \eqref{spec2con} are also known for the Davies \& Plum and Zimmermann \& Mertins methods; see \cite[Lemma 2]{bost1}.
}

\section{Perturbation of $\sigma_{\dis}(A)$}

In this section we consider $\sigma(A+iP)$ where $P$ is a non-trivial orthogonal projection. 
Let $a,b\in\mathbb{R}$, $a,b\in\rho(A)$, $a<b$ and denote $\Delta=[a,b]$. For the remainder of this manuscript, we assume that
\[
\sigma(A)\cap\Delta=\{\lambda_1,\dots,\lambda_d\}\subset\sigma_{\dis}(A)\quad\text{where}\quad d<\infty.
\]
We are concerned with the perturbation of the eigenvalues $\{\lambda_1,\dots,\lambda_d\}$. By theorems \ref{lem1b} and \ref{cor1a}, we have
\[
\mathcal{U}_{a,\lambda_1}^{0,1}\cup\mathcal{U}_{\lambda_1,\lambda_2}^{0,1}\cup\dots\cup\mathcal{U}_{\lambda_{d-1},\lambda_d}^{0,1}\cup\mathcal{U}_{\lambda_d,b}^{0,1}\subset\rho(A+iP)\quad\text{and}\quad\sigma(A+iP)\subset\mathcal{X}_A.
\]
However, we shall be interested in the set
\begin{equation}\label{theinter}
\mathcal{U}_{a,b}^{0,1}\cap\sigma(A+iP).
\end{equation}
We will show that when $\Vert(I-P)E(\Delta)\Vert$ is sufficiently small, then \eqref{theinter} will consist only of elements in a small neighbourhood of $\Gamma_{a,b}^{0,1}$, and of eigenvalues which are in small neighbourhoods of the $\lambda_j+i$, $1\le j\le d$.

\begin{definition}
For $z\in\mathcal{U}_{a,b}^{0,1}$, and $K_{a,b}$ as in Definition \ref{rsK} (with $b^-=0$ and $b^+=1$), we set
\begin{align*}
d(z) &=\min\left\{\frac{\dist\Big(z,\Gamma_{a,b}^{0,1}\Big)^4}{K_{a,b}},\dist\Big(z,\big\{\lambda_1+i,\dots,\lambda_d+i\big\}\Big)\right\}. 
\end{align*}
\void{
For $\varepsilon>0$ we set
\begin{align*}
\mathcal{X}_\varepsilon:=\big\{z\in\mathcal{U}_\Delta:d(z)> 3\varepsilon\big\}.
\end{align*}
}
\end{definition}

\begin{proposition}\label{thm1}
Let $z\in\mathcal{U}_{a,b}^{0,1}$ and $d(z)>3\Vert (I-P)E(\Delta)\Vert$, then 
\begin{equation}\label{resolvent_bound2}
z\in\rho(A+iP)\quad\textrm{and}\quad\Vert (A+iP-z)^{-1}\Vert\le \Big(d(z)-3\Vert (I-P)E(\Delta)\Vert\Big)^{-1}.
\end{equation}
\end{proposition}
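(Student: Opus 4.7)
My plan is to decompose the perturbation $P$ into a part $\tilde P$ that reduces with respect to the spectral projection $Q:=E(\Delta)$, plus a small off-diagonal remainder. Define
$\tilde P := QPQ + (I-Q)P(I-Q)$ and $R := P - \tilde P = QP(I-Q) + (I-Q)PQ$.
Since $Q$ commutes with $A$ and with $\tilde P$, both $Q\mathcal{H}$ and $(I-Q)\mathcal{H}$ reduce $A+i\tilde P$, which splits as an orthogonal sum $B_1 \oplus B_2$, where $B_1 := QAQ + iQPQ$ acts on the finite-dimensional space $Q\mathcal{H}$ and $B_2 := (I-Q)A(I-Q) + i(I-Q)P(I-Q)$ acts on $(I-Q)\mathcal{H}$. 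Writing $\eta := \|(I-P)E(\Delta)\|$, the remainder $R$ is block-antidiagonal, hence $\|R\| = \|(I-Q)PQ\| = \|(I-Q)(I-P)Q\| \le \eta$.

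I would then treat the two blocks separately. For $B_2$, the self-adjoint part $(I-Q)A(I-Q)$ has $(a,b)$ in its resolvent set (since $E(\Delta)$ removes exactly $\lambda_1,\dots,\lambda_d$ from $\sigma(A)$), and $(I-Q)P(I-Q)$ is bounded self-adjoint with spectrum in $[0,1]$; Theorem \ref{lem1b} therefore gives $\mathcal{U}_{a,b}^{0,1}\subset\rho(B_2)$ with $\|(B_2-z)^{-1}\|\le K_{a,b}/\dist(z,\Gamma_{a,b}^{0,1})^4$. For the finite-dimensional block $B_1$ I would compare with the normal operator $N:=QAQ + iI|_{Q\mathcal{H}}$, whose spectrum is exactly $\{\lambda_1+i,\dots,\lambda_d+i\}$; the correction $B_1-N = -iQ(I-P)Q|_{Q\mathcal{H}}$ satisfies the quadratic estimate $\|B_1-N\| = \|Q(I-P)Q\| = \|(I-P)Q\|^2 = \eta^2$, via the identity $Q(I-P)Q = [(I-P)Q]^*[(I-P)Q]$ (using that $I-P$ is a projection). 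A Neumann series around $N$ then yields $\|(B_1-z)^{-1}\|\le (\dist(z,\{\lambda_j+i\}) - \eta^2)^{-1}$ whenever $\dist(z,\{\lambda_j+i\})>\eta^2$.

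Combining the two block bounds, for any $z\in\mathcal{U}_{a,b}^{0,1}$ with $d(z)>\eta^2$ one obtains $\|(A+i\tilde P - z)^{-1}\|\le (d(z) - \eta^2)^{-1}$. A final Neumann series based on the identity $A+iP-z = \bigl(I + iR(A+i\tilde P - z)^{-1}\bigr)(A+i\tilde P - z)$ and $\|R\|\le\eta$ then gives $z\in\rho(A+iP)$ with $\|(A+iP - z)^{-1}\|\le (d(z) - \eta - \eta^2)^{-1}$ whenever $d(z)>\eta + \eta^2$. Since $I-P$ is a projection, $\eta\le 1$, so $\eta+\eta^2 \le 2\eta < 3\eta$, making the hypothesis $d(z)>3\eta$ more than sufficient and giving $(d(z) - \eta - \eta^2)^{-1}\le (d(z) - 3\eta)^{-1}$, which is the stated bound.

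The main conceptual step is the block decomposition that isolates the finite-dimensional spectral subspace $\mathcal{L}(\Delta)$ from its orthogonal complement; the main technical point requiring care is ensuring this decomposition genuinely reduces $A+i\tilde P$ when $A$ is unbounded, which amounts to checking that $E(\Delta)$ preserves $\Dom(A)$ (automatic for spectral projections of self-adjoint operators) and commutes with $\tilde P$ (by construction of $\tilde P$). Once this reduction is in place, the hard work is replaced by two routine Neumann-series perturbation arguments, with the quadratic bound $\|Q(I-P)Q\|=\eta^2$ providing extra headroom that absorbs all the slack into the clean constant $3\eta$ appearing in the statement.
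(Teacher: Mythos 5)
Your argument is correct and reaches the stated bound, but it assembles the same underlying decomposition in a genuinely different way. Both proofs split $\mathcal{H}$ according to $E:=E(\Delta)$ and use Theorem~\ref{lem1b} on the $(I-E)$-block together with the distance to $\{\lambda_j+i\}$ on the $E$-block, but the paper never inverts anything: it expands $P=EPE+(I-E)PE+EP(I-E)+(I-E)P(I-E)$, lower-bounds $\Vert(A+iP-z)u\Vert$ directly by using orthogonality of the two diagonal block contributions to get $(d(z)-\varepsilon)\Vert u\Vert$, and then subtracts $2\varepsilon\Vert u\Vert$ for the two cross terms, arriving at $\Vert(A+iP-z)u\Vert\ge(d(z)-3\varepsilon)\Vert u\Vert$. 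You instead invert the block-diagonal operator $A+i\tilde P=B_1\oplus B_2$ (getting $\Vert(A+i\tilde P-z)^{-1}\Vert\le(d(z)-\eta^2)^{-1}$) and then run a Neumann series against the off-diagonal remainder $R$. Your route has two modest advantages: the Neumann series delivers bijectivity of $A+iP-z$ for free, whereas the paper's direct lower bound establishes injectivity and closed range and implicitly leans on the Fredholm-index argument of Theorem~\ref{lem1b} for surjectivity; and you exploit the identity $E(I-P)E=[(I-P)E]^*[(I-P)E]$, so $\Vert E(I-P)E\Vert=\eta^2$, which gives the slightly sharper intermediate bound $(d(z)-\eta-\eta^2)^{-1}$ before you relax it to $(d(z)-3\eta)^{-1}$. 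One point both arguments use implicitly and that is worth flagging: Theorem~\ref{lem1b} is invoked for $(I-E)P(I-E)$ restricted to $(I-E)\mathcal{H}$, whose spectrum lies in $[0,1]$ but need not contain $0$ or $1$; this is fine because Lemma~\ref{oldestlem} and the rest of that proof require only $\sigma(B)\subset[b^-,b^+]$, not that the endpoints be attained.
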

\begin{proof}
For simplicity let us denote $E=E(\Delta)$ and $\varepsilon=\Vert (I-P)E(\Delta)\Vert$. We readily deduce that
$\Vert(I-E)PE\Vert\le\varepsilon$ and $\Vert EP(I-E)\Vert\le\varepsilon$. With these inequalities and the
identity
$P = EPE +(I-E)PE + EP(I-E) + (I-E)P(I-E)$, we obtain for any $u\in\Dom(A)$
\begin{align*}
\Vert (A+iP -z)u\Vert & =\Vert(A-z)(I-E)u + (A-z)Eu + iPu\Vert\\
&= \Vert(A-z)(I-E)u + (A-z)Eu\\
&\quad + i\big(EPE+(I-E)PE \\
&\quad + EP(I-E)+ (I-E)P(I-E)\big)u\Vert\\
&\ge \Vert(A-z)(I-E)u + i(I-E)P(I-E)u\\
&\quad +(A-z)Eu+ iEPEu\Vert\\
&\quad - \Vert(I-E)PEu + EP(I-E)u\Vert\\
&\ge\Vert(A-z)(I-E)u + i(I-E)P(I-E)u\\
&\quad+ (A-z)Eu + iEPEu\Vert - 2\varepsilon\Vert u\Vert.
\end{align*}
The term $(A-z)Eu + iEPEu$ satisfies the estimate
\begin{align*}
\Vert(A-z)Eu + iEPEu\Vert &= \Vert(A-z+i)Eu + iE(P-I)Eu\Vert\\
&\ge (d(z)-\varepsilon)\Vert Eu\Vert.
\end{align*}
Next consider the term $(A-z)(I-E)u + i(I-E)P(I-E)u$. We have
\[
(A-z)(I-E) + i(I-E)P(I-E) :\mathcal{H}\ominus\mathcal{L}(\Delta) \to\mathcal{H}\ominus\mathcal{L}(\Delta).
\]
The restriction of $A$ to $\mathcal{H}\ominus\mathcal{L}(\Delta)$ is a self-adjoint operator with no spectrum in the interval $\Delta$. The restriction of $(I-E)P$ to $\mathcal{H}\ominus\mathcal{L}(\Delta)$ is a self-adjoint operator with $0\le(I-E)P\le 1$. Therefore, by Theorem \ref{lem1b}, 
\begin{align*}
\Vert(A-z)(I-E)u + i(I-E)P(I-E)u\Vert &\ge \frac{\dist(z,\Gamma_{a,b}^{0,1})^4}{K_{a,b}}\Vert(I-E)u\Vert\\
&\ge d(z)\Vert(I-E)u\Vert.
\end{align*}
Combining these three estimates yields the result.
\end{proof}

\void{
\begin{corollary}\label{cor0}
There exist constants $c_r,\varepsilon_r>0$, independent of $P$, such that whenever $\Vert (I-P)E(\Delta)\Vert\le\varepsilon_r$ and $\vert\lambda_j+i-z\vert=r$ for some $1\le j\le d$, we have
\begin{equation}\label{resb}
z\in\rho(A+iP)\quad\textrm{with}\quad\Vert(A+iP-z)^{-1}\Vert\le \frac{1}{c_r}.
\end{equation}
\end{corollary}
\begin{proof}
We may choose any $\varepsilon_r>0$ such that, for each $1\le j\le d$, 
\[d(z)-3\varepsilon_r>0\quad\textrm{for all}\quad\vert\lambda_j+i-z\vert=r.\]
It then follows, from Proposition \ref{thm1}, that $z\in\rho(A+iP)$ and $\Vert(A+iP-z)^{-1}\Vert$ is uniformly bounded for all $z\in\mathcal{U}_\Delta$ with $\vert\lambda_j+i-z\vert=r$.
It will therefore suffice to show that $A+iP-z$ is also invertible with uniformly bounded inverse whenever $\vert\lambda_j+i-z\vert=r$ and $\Im z>1$. That $z\in\rho(A+iP)$ follows immediately from \eqref{nr}. Suppose that $(z_n^\pm)$ is  sequence converging to $\lambda_j\pm r+i$ with $\Im z_n^\pm>1$ for every $n\in\mathbb{N}$. Let $(S_n)$ be a sequence of orthogonal projections with $\Vert (I-S_n)E(\Delta)\Vert\le\varepsilon_r$ for every $n\in\mathbb{N}$. If there exists a sequence of normalised vectors $(u_n)$ with
$(A+iS_n-z_n)u_n\to 0$, then
\[
\big(A+iS_n-(\lambda_j\pm r+i)\big)u_n = \big(A+iS_n-z_n^\pm)u_n +\big(z_n^\pm-(\lambda_j\pm r+i)\big)\to 0.
\]
However, by Corollary \ref{thm1}, we have the bound
\[
\big\Vert\big(A+iS_n-(\lambda_j\pm r+i)\big)^{-1}\big\Vert\le\frac{1}{d(\lambda_j\pm r+i)-3\varepsilon_r}\quad\forall n\in\mathbb{N}.
\]
The existence of a constant $c_r>0$ satisfying \eqref{resb} now follows from the estimate
\[
\Vert(A+iP-z)^{-1}\Vert\le\frac{1}{\dist\big(z,W(A+iP)\big)}\quad\forall z\notin W(A+iP).
\]
\end{proof}
}

\begin{lemma}\label{cor1}
Let $(I-P)E(\Delta)=0$. Then
\[\mathcal{U}_{a,b}^{0,1}\Big\backslash\{\lambda_1+i,\dots,\lambda_d+i\}\subset\rho(A+iP).\]
Moreover, $\lambda_j+i$ is an eigenvalue of $A+iP$ with spectral subspace $\mathcal{L}(\{\lambda_j\})$, and $A+iP-(\lambda_j+i)$ is Fredholm with index zero. 
\end{lemma}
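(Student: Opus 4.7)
The plan is to handle the two assertions separately. For the first, the hypothesis $(I-P)E(\Delta)=0$ gives $\varepsilon:=\Vert(I-P)E(\Delta)\Vert=0$, so Proposition \ref{thm1} applies at any $z\in\mathcal{U}_{a,b}^{0,1}$ with $d(z)>0$. For such $z$, the factor $\dist(z,\Gamma_{a,b}^{0,1})$ is automatically positive, so $d(z)>0$ is equivalent to $z\notin\{\lambda_1+i,\dots,\lambda_d+i\}$. This immediately yields $\mathcal{U}_{a,b}^{0,1}\setminus\{\lambda_1+i,\dots,\lambda_d+i\}\subset\rho(A+iP)$.

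For the second assertion I would set $E:=E(\Delta)$ and rewrite the hypothesis as $PE=E$. Taking adjoints (both $E$ and $P$ are self-adjoint) gives $EP=E$. These two identities show that both $\mathcal{L}(\Delta)=E\mathcal{H}$ and $(I-E)\mathcal{H}$ are invariant under $A+iP$: for $u\in\mathcal{L}(\Delta)$ we have $Pu=PEu=Eu=u\in\mathcal{L}(\Delta)$; for $u\in(I-E)\mathcal{H}$ we have $EPu=EP(I-E)u=E(I-E)u=0$, so $Pu\in(I-E)\mathcal{H}$. Since $A$ reduces along this orthogonal decomposition as well, $A+iP$ splits as
\[
\bigl(A|_{\mathcal{L}(\Delta)}+iI\bigr)\oplus\bigl(A|_{(I-E)\mathcal{H}}+i(I-E)P(I-E)|_{(I-E)\mathcal{H}}\bigr).
\]
On the finite-dimensional summand the spectrum is exactly $\{\lambda_1+i,\dots,\lambda_d+i\}$ with eigenspaces $\mathcal{L}(\{\lambda_j\})$. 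On the second summand, $A|_{(I-E)\mathcal{H}}$ is self-adjoint with $(a,b)$ in its resolvent set and $(I-E)P(I-E)|_{(I-E)\mathcal{H}}$ is self-adjoint with spectrum in $[0,1]$, so Theorem \ref{lem1b} places the spectrum of this summand outside $\mathcal{U}_{a,b}^{0,1}$. Consequently $\lambda_j+i$ is an isolated eigenvalue of $A+iP$ whose associated spectral subspace is precisely $\mathcal{L}(\{\lambda_j\})$.

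Finally, the Fredholm claim follows directly from the same decomposition: $A+iP-(\lambda_j+i)$ is invertible on $(I-E)\mathcal{H}$, and restricts to the finite-dimensional self-adjoint operator $A|_{\mathcal{L}(\Delta)}-\lambda_j$ on $\mathcal{L}(\Delta)$, whose kernel and cokernel both equal $\mathcal{L}(\{\lambda_j\})$. Adding the Fredholm indices gives $0$.

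The only mildly delicate point is passing from $(I-P)E=0$ to the two-sided identity $EP=PE=E$ and using it to establish the reducing decomposition for the \emph{unbounded} operator $A+iP$; everything else is either a direct invocation of Proposition \ref{thm1} and Theorem \ref{lem1b} or elementary finite-dimensional linear algebra.
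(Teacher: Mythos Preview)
Your proof is correct and takes a genuinely different, cleaner route than the paper. The key observation you make --- that $(I-P)E=0$ forces $PE=EP=E$, so $\mathcal{L}(\Delta)$ and its orthogonal complement \emph{reduce} $A+iP$ --- is not exploited in the paper's proof. Once you have this orthogonal decomposition, everything follows almost immediately: on the finite-dimensional block $A+iP$ is simply $A\vert_{\mathcal{L}(\Delta)}+iI$, a normal operator, so semi-simplicity, the identification of the spectral subspace, and the Fredholm index are all automatic; on the complementary block, Theorem~\ref{lem1b} (applied exactly as in the proof of Proposition~\ref{thm1}) shows $\lambda_j+i$ lies in the resolvent.

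By contrast, the paper argues each point directly without using the decomposition: it computes the kernel of $A+iP-(\lambda+i)$ by taking real and imaginary parts of $\langle(A-\lambda)v,v\rangle=i\Vert(I-P)v\Vert^2$; it rules out Jordan chains by a separate contradiction showing $\Vert Pw\Vert>\Vert w\Vert$; and it establishes the closed-range/Fredholm property via a sequence argument and the adjoint identity $\mathrm{def}=\mathrm{nul}(A-iP-(\lambda-i))$. Your approach bypasses all three of these steps. The only minor point worth noting is that your appeal to Theorem~\ref{lem1b} on the second block uses $0\le (I-E)P(I-E)\le I$ rather than $b^-=0$, $b^+=1$ exactly; this is the same convention the paper itself adopts in the proof of Proposition~\ref{thm1}, and the degenerate cases $P_2=0$ (possible when $P=E$) are trivially handled since the block then has real spectrum disjoint from $(a,b)$.
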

\begin{proof}
The first assertion follows immediately from Proposition \ref{thm1}. Let $\lambda\in\{\lambda_1,\dots,\lambda_d\}$. Whenever $u\in\cL(\{\lambda\})$ we have $(A+iP)u = (\lambda + i)u$. Further, if $v\in\Dom(A)$ and $(A+iP)v = (\lambda+i)v$, then
$(A-\lambda)v = i(I-P)v$ and therefore
\begin{displaymath}
\langle(A-\lambda)v,v\rangle = i\langle(I-P)v,v\rangle = i\Vert(I-P)v\Vert^2.
\end{displaymath}
It follows that $(I-P)v=0$ and $(A-\lambda)v=0$, and hence $v\in\cL(\{\lambda\})$.
We deduce that $\nul(A+iP-(\lambda+i)) = \nul(A-\lambda)$ where $\mathcal{L}(\{\lambda\})$ is the null space for both operators. Suppose that $\lambda+i$ is not
semi-simple. Then there exists a non-zero vector $w\perp\cL(\{\lambda\})$ with
$(A+iP-\lambda-i)w = u\in\cL(\{\lambda\})$.
Hence,
\begin{displaymath}
(A-\lambda-i)w\perp\cL(\{\lambda\})\quad\textrm{with}\quad\Vert(A-\lambda-i)w\Vert>\Vert w\Vert,
\end{displaymath}
and
\begin{displaymath}
iPw = u - (A-\lambda-i)w\quad\textrm{where}\quad u\perp(A-\lambda-i)w.
\end{displaymath}
It follows that
\[\Vert Pw\Vert^2 = \Vert u\Vert^2 + \Vert(A-\lambda-i)w\Vert^2 > \Vert w\Vert^2,\]
which is a contradiction since $\Vert P\Vert =1$.

Next  we show that $A+iP-(\lambda+i)$ is Fredholm. The operator $A+iP-(\lambda+i)$ has closed range iff there exists a $\gamma>0$ such that
\begin{equation}\label{notclosed1}
\Vert(A+iP-(\lambda+i))v\Vert\ge\gamma\dist(v,\mathcal{L}\{\lambda\})\quad\forall v\in\Dom(A);
\end{equation}
see \cite[Theorem IV.5.2]{katopert}. We suppose that \eqref{notclosed1} is false. There exist $0\le\gamma_n\to 0$ and $v_n\in\Dom(A)$ with
\[
\Vert(A+iP-(\lambda+i))v_n\Vert<\gamma_n\dist(v_n,\mathcal{L}\{\lambda\}),\quad n\in\mathbb{N}.
\]
Set $\tilde{w}_n=(I-E(\{\lambda\})v_n$, note $\tilde{w}_n\ne 0$ for all $n\in\mathbb{N}$, and denote $w_n=\tilde{w}_n/\Vert\tilde{w}_n\Vert$. Using $(I-P)E(\{\lambda\})=0$, we have
\begin{align*}
\gamma_n&=\frac{\gamma_n\dist(v_n,\mathcal{L}\{\lambda\})}{\Vert\tilde{w}_n\Vert}>\frac{\Vert(A+iP-(\lambda+i))v_n\Vert}{\Vert\tilde{w}_n\Vert}
=\Vert(A+iP-(\lambda+i))w_n\Vert,
\end{align*}
and hence $(A-\lambda)w_n - i(I-P)w_n\to 0$. Since
\[
\langle(A-\lambda)w_n,w_n\rangle\in\mathbb{R}\quad\text{and}\quad \langle i(I-P)w_n,w_n\rangle = i\Vert(I-P)w_n\Vert^2,
\]
we deduce that $(I-P)w_n\to 0$ and therefore also that $(A-\lambda)w_n\to 0$. The latter implies that $\dist(w_n,\mathcal{L}(\{\lambda\}))\to 0$ which is a contradiction since $w_n\perp\mathcal{L}(\{\lambda\})$ and $\Vert w_n\Vert=1$ for all $n\in\mathbb{N}$. From the contradiction we deduce that $A+iP-(\lambda+i)$ has closed range. Furthermore,
\[\Big(A+iP-(\lambda+i)\Big)^* = A-iP-(\lambda-i)
\]
and arguing as above we deduce that $0$ is an eigenvalue of $A-iP-(\lambda-i)$ with null space $\mathcal{L}(\{\lambda\})$. Hence
\[
\text{def}(A+iP-(\lambda+i)) = \nul(A-iP-(\lambda-i)) = \nul(A+iP-(\lambda+i));
\]
see \cite[Theorem IV.5.13]{katopert}. Thus $A+iP-(\lambda+i)$ is Fredholm and the result follows.
\end{proof}
We fix a $\lambda\in\{\lambda_1,\dots,\lambda_d\}$ with $\dim\mathcal{L}(\{\lambda\})=\kappa$, and an $0<r<1/2$ with
\begin{equation}\label{rhyp}
\mathbb{D}(\lambda+i,2r)\cap\big(\sigma(A)+i\big)=\{\lambda+i\}\quad\text{and}\quad\mathbb{D}(\lambda+i,2r)\cap\Gamma_{a,b}^{0,1}=\varnothing.
\end{equation}
where $\mathbb{D}(x,y)$ is the closed disc with centre $x$ and radius $y$. 

\begin{lemma}\label{cor2}
If $\Vert (I-P)E(\Delta)\Vert$ is sufficiently small,
then
\[\mathbb{D}(\lambda+i,r)\cap\sigma(A+iP)\ne\varnothing
\]
and the dimension of the corresponding spectral subspace is equal to $\kappa$.
\end{lemma}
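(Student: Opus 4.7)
The plan is a Riesz projection argument on the contour $\mathcal{C}:=\partial\mathbb{D}(\lambda+i,r)$, connecting $P$ to the reference projection $E(\Delta)$ (for which Lemma~\ref{cor1} pins down the spectrum) by a continuous path along which Proposition~\ref{thm1} supplies a uniform resolvent bound.

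By \eqref{rhyp}, $\mathcal{C}$ is disjoint from $\Gamma_{a,b}^{0,1}$ and from $\{\lambda_1+i,\dots,\lambda_d+i\}$, so compactness yields a constant $c_0>0$ with $d(z)\ge c_0$ on $\mathcal{C}$. Requiring $\varepsilon:=\Vert(I-P)E(\Delta)\Vert\le c_0/6$, Proposition~\ref{thm1} gives $\mathcal{C}\subset\rho(A+iP)$ with $\Vert(A+iP-z)^{-1}\Vert\le 2/c_0$ on $\mathcal{C}$, and the Riesz projection
\[
\Pi(P):=-\frac{1}{2\pi i}\oint_{\mathcal{C}}(A+iP-z)^{-1}\,dz
\]
is well defined. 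Its rank equals the total algebraic multiplicity of $\sigma(A+iP)\cap\mathbb{D}(\lambda+i,r)$, so it suffices to show that $\rank\Pi(P)=\kappa$.

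I would then introduce the homotopy $S_t:=(1-t)E(\Delta)+tP$, $t\in[0,1]$, which is self-adjoint with $0\le S_t\le I$ and satisfies $(I-S_t)E(\Delta)=t(I-P)E(\Delta)$, so $\Vert(I-S_t)E(\Delta)\Vert\le\varepsilon$ uniformly in $t$. The proof of Proposition~\ref{thm1} uses only self-adjointness, the inequality $0\le P\le I$, the bound $\Vert(I-P)E(\Delta)\Vert\le\varepsilon$, and an application of Theorem~\ref{lem1b} to $(I-E(\Delta))S(I-E(\Delta))$ on $\mathcal{L}(\Delta)^\perp$; no step requires idempotency. Hence the same uniform bound $\Vert(A+iS_t-z)^{-1}\Vert\le 2/c_0$ holds for every $t\in[0,1]$ and $z\in\mathcal{C}$, so $\Pi(S_t)$ is defined throughout the path.

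The second resolvent identity,
\[
(A+iS_t-z)^{-1}-(A+iS_s-z)^{-1}=i(s-t)(A+iS_t-z)^{-1}(P-E(\Delta))(A+iS_s-z)^{-1},
\]
combined with the uniform resolvent bound gives $\Vert\Pi(S_t)-\Pi(S_s)\Vert\le C|t-s|$, so $t\mapsto\Pi(S_t)$ is norm-continuous. Since two idempotents whose difference has norm less than one share the same rank, $\rank\Pi(S_t)$ is constant on $[0,1]$. By Lemma~\ref{cor1} and \eqref{rhyp}, the only spectral point of $A+iE(\Delta)$ in $\mathbb{D}(\lambda+i,r)$ is $\lambda+i$, with spectral subspace $\mathcal{L}(\{\lambda\})$ of dimension $\kappa$, so $\rank\Pi(S_0)=\kappa$ and therefore $\rank\Pi(P)=\rank\Pi(S_1)=\kappa$, forcing $\mathbb{D}(\lambda+i,r)\cap\sigma(A+iP)\ne\varnothing$ and fixing the dimension of the corresponding spectral subspace. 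The main technical point to check is the extension of Proposition~\ref{thm1} from orthogonal projections to self-adjoint $S_t$ with $0\le S_t\le I$, but this amounts to re-reading that proof with idempotency removed and introduces no new ideas.
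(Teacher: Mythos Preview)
Your proof is correct and follows the same overall strategy as the paper: a homotopy of perturbations together with continuity of the rank of Riesz projections on the circle $|z-\lambda-i|=r$. The difference lies in the choice of path. The paper builds a family of \emph{orthogonal projections} $P_t$ interpolating between $P$ and a projection $P_1$ whose range contains $\mathcal{L}(\{\lambda\})$; this keeps Proposition~\ref{thm1} applicable verbatim but requires an explicit construction of $P_t$ and a separate verification that $\Vert(I-P_t)E(\Delta)\Vert$ stays small. You instead take the straight line $S_t=(1-t)E(\Delta)+tP$, which leaves the class of projections but lands on the endpoint $S_0=E(\Delta)$ that satisfies the hypothesis of Lemma~\ref{cor1} exactly, and for which the estimate $(I-S_t)E(\Delta)=t(I-P)E(\Delta)$ is a one-line identity.

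The price of your route is the observation that the proof of Proposition~\ref{thm1} never uses $P^2=P$: the off-diagonal bounds $\Vert(I-E)SE\Vert,\Vert ES(I-E)\Vert\le\varepsilon$ follow from self-adjointness alone, the bound $\Vert E(S-I)E\Vert\le\varepsilon$ is immediate, and the application of Theorem~\ref{lem1b} on $\mathcal{L}(\Delta)^\perp$ only needs $0\le(I-E)S(I-E)\le I$, which $0\le S\le I$ guarantees. You identify this correctly. One minor point worth making explicit (and which the paper also glosses over): points of $\mathcal{C}$ with $\Im z>1$ lie outside $\mathcal{U}_{a,b}^{0,1}$, so Proposition~\ref{thm1} does not cover them; but there $z$ is outside the numerical range of $A+iS_t$ and the bound $\Vert(A+iS_t-z)^{-1}\Vert\le(\Im z-1)^{-1}$ is uniform in $t$, so the contour integral is still controlled.
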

\begin{proof}
Let $u_1,\dots,u_\kappa$ be an orthonormal basis for $\cL(\{\lambda\})$. Set $v_j=Pu_j$ and let $v_{\kappa+1},v_{\kappa+2},\dots$ be such that
\[
\range(P) = \Span\{v_1,\dots,v_\kappa,v_{\kappa+1},v_{\kappa+2},\dots\}.
\]
For $t\in[0,1]$ set $w_j(t)=tu_j + (1-t)v_j$ and let $P_t$ be the orthogonal projection onto $\Span\{w_1(t),\dots,w_\kappa(t),v_{\kappa+1},v_{\kappa+2},\dots\}$. For any normalised $u\in\cL(\{\lambda\})$ we have $u=c_1u_1+\dots+c_ku_\kappa$ and
\begin{align*}
\Vert(I-P_t)u\Vert &\le \Vert c_1u_1+\dots+c_\kappa u_\kappa - c_1w_1(t)-\dots-c_\kappa w_\kappa(t)\Vert\\
&=(1-t)\Vert (1-P)u\Vert.
\end{align*}
Thus
\begin{equation}\label{projfam}
\Vert(I-P_t)E(\Delta)\Vert\le(1-t)\Vert(I-P)E(\Delta)\Vert\quad\forall t\in[0,1].
\end{equation}
By Lemma \ref{cor1}, we have $\lambda+i\in\sigma(A+iP_1)$ with spectral subspace $\cL(\{\lambda\})$. By Proposition \ref{thm1} and \eqref{projfam} we deduce that whenever $\Vert (I-P)E(\Delta)\Vert$ is sufficiently small, the operator $A + iP_t-zI$ is invertible with uniformly bounded inverse for all $\vert z-\lambda-i\vert= r$ and $t\in[0,1]$. Hence, we may define the family of spectral projections
\[
Q_t := \int_{\vert\lambda+i-z\vert=r}(A+iP_t - \zeta)^{-1}~d\zeta.
\]
Evidently, $Q(t)$ is a continuous family and therefore 
\[
\kappa=\dim\big(\cL(\{\lambda\})\big)= \rank(Q_1) = \rank(Q_t) \quad\forall t\in[0,1].
\]
\end{proof}

\void{
\subsection{Galerkin Subspaces and Projections}\label{galsub}

Associated to the form $\frak{a}$ is the Hilbert space $\mathcal{H}_\frak{a}$ which has inner-product
\[
\langle u,v\rangle_{\frak{a}}:=\frak{a}(u,v) - (m-1)\langle u,v\rangle\quad\forall u,v\in\Dom(\frak{a})\quad\textrm{where}\quad m=\min\sigma(A)
\]
and norm
\begin{align*}
\Vert u\Vert_{\frak{a}} &=\big(\frak{a}(u,u) - (m-1)\langle u,u\rangle\big)^{\frac{1}{2}}=\left(\int_{\mathbb{R}}\lambda-m+1~d\langle E_\lambda u,u\rangle\right)^{\frac{1}{2}}\\
&=\Vert(A-m+1)^{\frac{1}{2}}u\Vert.
\end{align*}
The gap or distance between two subspaces $\mathcal{M}$ and $\mathcal{N}$ of $\mathcal{H}$, is defined as
\[
\hat{\delta}(\mathcal{M},\mathcal{N}) = \max\big[\delta(\mathcal{M},\mathcal{N}),\delta(\mathcal{N},\mathcal{M})\big]
\quad\textrm{where}\quad
\delta(\mathcal{M},\mathcal{N}) = \sup_{u\in\mathcal{M},\Vert u\Vert=1}\dist(u,\mathcal{N});
\]
see \cite[Section IV.2.1]{katopert} for further details. We shall write $\delta_{\frak{a}}$ and $\hat{\delta}_{\frak{a}}$ to indicate the gap between subspaces of $\mathcal{H}_\frak{a}$. We denote by $P_n$ the orthogonal projection from $\mathcal{H}$ onto the trial space $\cL_n$, and we set
\[
\varepsilon_n=\delta_{\frak{a}}(\cL(\Delta),\cL_n),\quad \mathcal{L}_n(\Delta) := \range\big(E_n(\Delta)\big)\quad\textrm{and}\quad B_n:=E_n(\Delta)P_n.
\] 

\void{
\begin{lemma}\label{l1}
If $A$ is bounded, then $B_nu\longrightarrow E(\Delta)u$ for all $u\in\mathcal{H}$.
\end{lemma}
\begin{proof}
$A_n$ converges strongly to $A$, therefore $E_n((-\infty,a))$ and $E_n((b,\infty))$ converge strongly to $E((-\infty,a))$ and $E_n((b,\infty))$, respectively; see for example \cite[corollaryollary VIII.1.6]{katopert}. Let $u\in\mathcal{H}$, then
\begin{align*}
B_nu &=  P_nu - E_n((-\infty,a))P_nu - E_n((b,\infty))u\longrightarrow E(\Delta)u.
\end{align*}
\end{proof}
}

\begin{lemma}\label{l1b}
If $A$ is bounded, then
\[\Vert(I-B_n)E(\Delta)\Vert=\mathcal{O}\big(\delta(\cL(\Delta),\cL_n)\big)\quad\textrm{and}\quad\delta\big(\cL(\Delta),\cL_n(\Delta)\big)=\mathcal{O}\big(\delta(\cL(\Delta),\cL_n)\big).\]
\end{lemma}
\begin{proof}
Let $(A-\lambda_j)u = 0$ with $\Vert u\Vert=1$ and $\lambda_j\in\Delta$. Set $u_n=P_nu$, then
\begin{align*}
\Vert(A_n-\lambda_j)u_n\Vert 
=\Vert(P_nA-\lambda_j)u_n - P_n(A-\lambda_j) u\Vert\le\Vert A\Vert\delta(\cL(\Delta),\cL_n),
\end{align*}
and
\begin{align*}
\Vert(I-B_n)u_n\Vert^2 &\le\int_{\mathbb{R}\backslash\Delta}\frac{\vert\mu-\lambda_j\vert^2}{\dist[\lambda_j,\{a,b\}]^2}~d\langle (E_n)_\mu u_n,u_n\rangle\\
&\le\frac{1}{\dist[\lambda_j,\{a,b\}]^2}\int_{\mathbb{R}}\vert\mu-\lambda_j\vert^2~d\langle (E_n)_\mu u_n,u_n\rangle\\
&\le\frac{\Vert A\Vert^2\delta(\cL(\Delta),\cL_n)^2}{\dist(\lambda_j,\{a,b\})^2}.
\end{align*}
Therefore
\begin{align*}
\Vert(I- B_n)u\Vert &\le \Vert(I-B_n)u_n\Vert + \Vert(I-B_n)(u-u_n)\Vert\\
&\le\left(\frac{\Vert A\Vert}{\dist(\lambda_j,\{a,b\})} + 1\right)\delta(\cL(\Delta),\cL_n),
\end{align*}
from which both assertions follow.
\end{proof}

\void{
\begin{lemma}\label{l2}
If $T$ is unbounded, then $Q_nu \stackrel{\frak{t}}{\longrightarrow}E(\Delta)u$ for all $u\in\Dom(\frak{t})$.
\end{lemma}
\begin{proof}
Let $\psi_{n,1},\dots,\psi_{n,d_n}$ be orthonormal eigenvectors associated to $\sigma(T,\cL_n)\cap\Delta$, so that
\[\frak{t}(\psi_{n,j},v)=\mu_{n,j}\langle\psi_{n,j},v\rangle\quad\textrm{for all}\quad v\in\cL_n\quad\textrm{and where}\quad\mu_{n,j}\in\Delta.\]
For each $v\in\cL_n$ we set
\[(T-m+1)^{\frac{1}{2}}v=:\tilde{v}\in\tilde{\cL}_n:=(T-m+1)^{\frac{1}{2}}\cL_n,\]
hence
\begin{equation}\label{es}
\langle(T-m+1)^{-1}\tilde{\psi}_{n,j},\tilde{v}\rangle=\frac{1}{\mu_{n,j}-m+1}\langle\tilde{\psi}_{n,j},\tilde{v}\rangle\quad\textrm{for all}\quad\tilde{v}\in\tilde{\cL}_n
\end{equation}
where
\[
\frac{1}{\mu_{n,j}-m+1}\in\tilde{\Delta}:=\left[\frac{1}{b-m+1},\frac{1}{a-m+1}\right].
\]
Note that $E(\Delta)$ is the spectral projection associated to the self-adjoint operator $(T-m+1)^{-1}$ and the interval $\tilde{\Delta}$. Evidently, the set
\[\left\{\frac{\tilde{\psi}_{n,1}}{\sqrt{\mu_{n,1}-m+1}},\dots,\frac{\tilde{\psi}_{n,d_n}}{\sqrt{\mu_{n,d_n}-m+1}}\right\}\]
are orthonormal eigenvectors associated to $\sigma((T-m+1)^{-1},\tilde{\cL}_n)\cap\tilde{\Delta}$.

Let $u\in\Dom(\frak{t})$ and $(T-m+1)^{\frac{1}{2}}u=v$. Denote by $\tilde{P}_n$ the orthogonal projection from $\mathcal{H}$ onto $\tilde{\mathcal{L}}_n$. Using Lemma \ref{l1} and \eqref{es}, we have
\begin{align*}
\Big\Vert\sum_{j=1}^{d_n}\Big\langle(T-m+1)^{-\frac{1}{2}}&\tilde{P}_nv,
\psi_{n,j}\Big\rangle\psi_{n,j} - E(\Delta)u\Big\Vert_{\frak{t}}\\
&=\Big\Vert\sum\Big\langle\tilde{P}_nv,
(T-m+1)^{-1}\tilde{\psi}_{n,j}\Big\rangle\psi_{n,j} - E(\Delta)u\Big\Vert_{\frak{t}}\\
&=\Big\Vert\sum\frac{\langle\tilde{P}_nv,
\tilde{\psi}_{n,j}\rangle\psi_{n,j}}{\mu_{n,j}-m+1} - E(\Delta)u\Big\Vert_{\frak{t}}\\
&=\Big\Vert\sum\frac{\langle v,\tilde{\psi}_{n,j}\rangle\psi_{n,j}}{\mu_{n,j}-m+1} - E(\Delta)(T-m+1)^{-\frac{1}{2}}v\Big\Vert_{\frak{t}}\\
&=\Big\Vert\sum\frac{\langle v,\tilde{\psi}_{n,j}\rangle\tilde{\psi}_{n,j}}{\Vert\tilde{\psi}_{n,j}\Vert^2} - E(\Delta)v\Big\Vert\longrightarrow 0,
\end{align*}
and
\begin{align*}
\Vert Q_nu - E(\Delta)u\Vert_{\frak{t}}
&\le\left\Vert\sum\Big\langle(T-m+1)^{-\frac{1}{2}}\tilde{P}_nv,
\psi_{n,j}\Big\rangle\psi_{n,j} - E(\Delta)u\right\Vert_{\frak{t}}\\
&+\left\Vert\sum\Big\langle(T-m+1)^{-\frac{1}{2}}(I-\tilde{P}_n)v,
\psi_{n,j}\Big\rangle\psi_{n,j}\right\Vert_{\frak{t}}.
\end{align*}
The first term on the right hand side converges to zero, for the second term we have
\begin{align*}
&\Big\Vert\sum\Big\langle(T-m+1)^{-\frac{1}{2}}(I-\tilde{P}_n)v,
\psi_{n,j}\Big\rangle\psi_{n,j}\Big\Vert_{\frak{t}}\\
&\qquad\qquad=\Big\Vert\sum\Big\langle(T-m+1)^{-1}(I-\tilde{P}_n)v,
\tilde{\psi}_{n,j}\Big\rangle\tilde{\psi}_{n,j}\Big\Vert\\
&\qquad\qquad=\Big\Vert\sum(\mu_{n,j}-m+1)\bigg\langle(T-m+1)^{-1}(I-\tilde{P}_n)v,
\frac{\tilde{\psi}_{n,j}}{\Vert\tilde{\psi}_{n,j}\Vert}\bigg\rangle\frac{\tilde{\psi}_{n,j}}{\Vert\tilde{\psi}_{n,j}\Vert}\Big\Vert\\
&\qquad\qquad\le(b-m+1)\Vert(T-m+1)^{-1}(I-\tilde{P}_n)v\Vert\longrightarrow 0.
\end{align*}
\end{proof}
}

\begin{lemma}\label{l2b}
$\delta_{\frak{a}}\big(\cL(\Delta),\cL_n(\Delta)\big)=\mathcal{O}(\varepsilon_n)$.
\end{lemma}
\begin{proof}
Let  $\cL_n(\Delta) = \Span\{u_{n,1},\dots,u_{n,d_n}\}$
where the $u_{n,j}$ are orthonormal eigenvectors of $A_n$, so that
\[\frak{a}(u_{n,j},v)=\mu_{n,j}\langle u_{n,j},v\rangle\quad\forall v\in\cL_n\quad\textrm{where}\quad\mu_{n,j}\in\Delta.\]
For each $v\in\cL_n$ we set $(T-m+1)^{\frac{1}{2}}v=:\tilde{v}\in\tilde{\cL}_n:=(T-m+1)^{\frac{1}{2}}\cL_n$,
and hence
\begin{equation}\label{es}
\langle(T-m+1)^{-1}\tilde{u}_{n,j},\tilde{v}\rangle=\frac{1}{\mu_{n,j}-m+1}\langle\tilde{u}_{n,j},\tilde{v}\rangle\quad\forall\tilde{v}\in\tilde{\cL}_n
\end{equation}
where
\[
\frac{1}{\mu_{n,j}-m+1}\in\tilde{\Delta}:=\left[\frac{1}{b-m+1},\frac{1}{a-m+1}\right].
\]
Evidently, the set
\[\left\{\frac{\tilde{u}_{n,1}}{\sqrt{\mu_{n,1}-m+1}},\dots,\frac{\tilde{u}_{n,d_n}}{\sqrt{\mu_{n,d_n}-m+1}}\right\}\]
consists of orthonormal eigenvectors associated to $\sigma((T-m+1)^{-1},\tilde{\cL}_n)\cap\tilde{\Delta}$. It is straightforward to show that $\delta(\cL(\Delta),\tilde{\cL}_n)=\mathcal{O}(\varepsilon_n)$. Using Lemma \ref{l1b} we have for any normalised $u$ with $(T-\lambda)u = 0$ and $\lambda\in\Delta$,
\begin{align*}
\mathcal{O}(\varepsilon_n)&=\delta(\cL(\Delta),\tilde{\cL}_n)\\
&\ge\left\Vert\sum_{j=1}^{d_n}\left\langle u,\frac{\tilde{u}_{n,j}}{\Vert\tilde{u}_{n,j}\Vert}\right\rangle
\frac{\tilde{u}_{n,j}}{\Vert\tilde{u}
_{n,j}\Vert} - u\right\Vert\\
&=\left\Vert\sum\frac{\langle u,(A-m+1)^{\frac{1}{2}}u_{n,j}\rangle}{\mu_{n,j}-m+1}
(A-m+1)^{\frac{1}{2}}u_{n,j} - u\right\Vert\\
&=\left\Vert(A-m+1)^{\frac{1}{2}}\left(\sum\frac{\sqrt{\lambda-m+1}}{\mu_{n,j}-m+1}\langle u,u_{n,j}\rangle
u_{n,j} - \frac{u}{\sqrt{\lambda-m+1}}\right)\right\Vert\\
&=\left\Vert\sum\frac{\sqrt{\lambda-m+1}}{\mu_{n,j}-m+1}\langle u,u_{n,j}\rangle
u_{n,j} -\frac{u}{\Vert u\Vert_{\frak{a}}}\right\Vert_{\frak{a}}\\
&\ge \dist_{\frak{a}}\left(\frac{u}{\Vert u\Vert_{\frak{a}}},\cL_n(\Delta)\right).
\end{align*}
\end{proof}

\void{
\begin{corollary}\label{cor3}
There exists a constant $C_1\ge0$ such that
\[\sigma(A+iB_n)\cap\mathbb{D}(\lambda_j+i,C_1\varepsilon_n)\ne\varnothing\quad1\le j\le d\quad\textrm{for all sufficiently large}\quad n\in\mathbb{N}
\]
with the total multiplicity of the intersection equal to the multiplicity of $\lambda_j\in\sigma(T)$.
\end{corollary}
\begin{proof}
Using Lemma \ref{l2b}, there exists a constant $C_0\ge 0$ such that for each $u\in\cL(\Delta)$ with $\Vert u\Vert_\frak{a} =1$ there exists a $u_n\in\cL_n(\Delta)$ such that $\Vert u - u_n\Vert_{\frak{a}}\le C_0\varepsilon_n$. Since $\Delta = (\alpha,\beta)$ we deduce that $\Vert u\Vert>\alpha-m+1$, and hence that
\[
\left\Vert \frac{u}{\Vert u\Vert} -\frac{u_n}{\Vert u\Vert}\right\Vert \le \frac{C_0\varepsilon_n}{\alpha-m+1}.
\]
Therefore $\Vert(I-B_n)E(\Delta)\Vert = \mathcal{O}(\varepsilon_n)$, and the result follows from Corollary \ref{cor2}.
\end{proof}

\begin{remark}
An immediate consequence of Lemma \ref{l2b} is $\Vert(I-B_n)E(\Delta)\Vert=\mathcal{O}(\varepsilon_n)$. Hence with this Corollary \ref{cor2} holds with $B=B_n$ for all sufficiently large $n$. However, it would also seem possible that $\Vert(I-B_n)E(\Delta)\Vert$ could converge to zero faster than this estimate, since
\end{remark}
}
}

In view of Lemma \ref{cor2}, it is natural to consider operators of the form $A+iP_n$ where $(P_n)$ is as sequence of orthogonal projections which converge strongly to the identity operator. The range of $P_n$ is denoted $\mathcal{L}_n$. It follows from Proposition \ref{thm1} and Lemma \ref{cor2} that
\begin{equation}\label{alimit}
\lim_{n\to\infty}\sigma(A+iP_n)\cap\mathcal{U}_{a,b}^{0,1} = \big\{\lambda_1+i,\dots,\lambda_d+i\big\}.
\end{equation}
We prove below, in Theorem \ref{QQ}, that elements from $\sigma(A+iP_n)$ converge to $\lambda+i$ extremely rapidly. To this end, we denote by $\mathcal{M}_n(\{\lambda+i\})$ the spectral subspace  which corresponds  to $\sigma(A+iP_n)\cap\mathbb{D}(\lambda+i,r)$. We also set $\varepsilon_n=\delta(\cL(\Delta),\cL_n)$.
 
\begin{theorem}\label{est}
There exists a constant $c_0>0$ such that 
\[\hat{\delta}_{\frak{a}}\big(\cL(\{\lambda\}),\mathcal{M}_n(\{\lambda+i\})\big)\le c_0\varepsilon_n\quad\text{for all sufficiently large n}.\]
\end{theorem}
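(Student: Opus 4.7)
The plan is to realise both subspaces as ranges of Riesz projections and to compare them via the second resolvent identity. Set
\[
Q_n=\frac{1}{2\pi i}\oint_{|\zeta-\lambda-i|=r}R_{P_n}(\zeta)\,d\zeta,\qquad Q=\frac{1}{2\pi i}\oint_{|\zeta-\lambda-i|=r}R_{E(\Delta)}(\zeta)\,d\zeta,
\]
where $R_S(\zeta):=(A+iS-\zeta)^{-1}$. By Lemma~\ref{cor1}, $Q$ is the spectral projection of $A+iE(\Delta)$ onto $\mathcal{L}(\{\lambda\})$; Proposition~\ref{thm1} together with \eqref{rhyp} guarantees that for all sufficiently large $n$ the resolvent $R_{P_n}(\zeta)$ exists and is uniformly bounded on the contour (because $d(\zeta)\ge r$ there and $\varepsilon_n\to 0$), so $Q_n$ is well defined and, by Lemma~\ref{cor2}, its range $\mathcal{M}_n(\{\lambda+i\})$ has dimension $\kappa$. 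Since both subspaces sit in $\mathcal{H}_\frak{a}$ with the same finite dimension, a standard principal-angles identity for subspaces of a Hilbert space gives $\delta_\frak{a}(\mathcal{L}(\{\lambda\}),\mathcal{M}_n(\{\lambda+i\}))=\delta_\frak{a}(\mathcal{M}_n(\{\lambda+i\}),\mathcal{L}(\{\lambda\}))$, so it suffices to bound $\Vert Q_n u-u\Vert_\frak{a}$ uniformly for $u\in\mathcal{L}(\{\lambda\})$.

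For $u\in\mathcal{L}(\{\lambda\})$ one has $(A+iE(\Delta))u=(\lambda+i)u$, hence $R_{E(\Delta)}(\zeta)u=(\lambda+i-\zeta)^{-1}u$ and $Qu=u$. The identity $R_{P_n}-R_{E(\Delta)}=R_{P_n}\,i(E(\Delta)-P_n)\,R_{E(\Delta)}$, combined with $(E(\Delta)-P_n)u=(I-P_n)u$ and integration over the contour, yields
\[
Q_n u-u=\frac{1}{2\pi i}\oint_{|\zeta-\lambda-i|=r}\frac{i\,R_{P_n}(\zeta)(I-P_n)u}{\lambda+i-\zeta}\,d\zeta.
\]
Since $u\in\mathcal{L}(\Delta)$, $\Vert(I-P_n)u\Vert\le\varepsilon_n\Vert u\Vert$; combined with the uniform resolvent bound and $|\lambda+i-\zeta|=r$ along the contour, this gives $\Vert Q_n u-u\Vert\le C_1\varepsilon_n\Vert u\Vert$.

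To promote this to a form-norm estimate I would use $AR_{P_n}(\zeta)=I+(\zeta-iP_n)R_{P_n}(\zeta)$, which is likewise uniformly bounded on the contour. Inserting this inside the integral representation yields $\Vert A(Q_n u-u)\Vert\le C_2\varepsilon_n\Vert u\Vert$, and then a Cauchy--Schwarz estimate
\[
\Vert Q_n u-u\Vert_\frak{a}^2=\langle(A-m+1)(Q_n u-u),Q_n u-u\rangle\le\Vert(A-m+1)(Q_n u-u)\Vert\,\Vert Q_n u-u\Vert\le C_3\varepsilon_n^2\Vert u\Vert^2.
\]
Since $\Vert u\Vert_\frak{a}^2=(\lambda-m+1)\Vert u\Vert^2$ on $\mathcal{L}(\{\lambda\})$, this rearranges to $\Vert Q_n u-u\Vert_\frak{a}\le c_0\varepsilon_n\Vert u\Vert_\frak{a}$, which is the desired one-sided (and, by the symmetry noted above, two-sided) gap bound.

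The main technical point is the uniform bound on $R_{P_n}(\zeta)$ along the contour, which reduces to the condition $d(\zeta)>3\varepsilon_n$ holding uniformly for $|\zeta-\lambda-i|=r$; by \eqref{rhyp} we have $d(\zeta)\ge r$ there, and since $E(\Delta)$ is of finite rank the strong convergence $P_n\to I$ forces $\varepsilon_n\to 0$, so the condition holds for all large $n$ and \eqref{resolvent_bound2} then delivers a constant bound independent of $n$. Everything else---the resolvent calculus, the $AR_{P_n}$ identity, and the gap symmetry---is essentially bookkeeping.
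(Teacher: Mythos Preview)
Your argument is correct and follows essentially the same route as the paper: compare the Riesz projections of $A+iE(\Delta)$ and $A+iP_n$ along $|\zeta-\lambda-i|=r$ via the second resolvent identity, with the uniform resolvent bound coming from Proposition~\ref{thm1}. The only differences are cosmetic---the paper upgrades to the $\frak{a}$-norm by bounding $\|(A-m+1)^{1/2}R_{P_n}(\zeta)\|$ directly rather than through $AR_{P_n}=I+(\zeta-iP_n)R_{P_n}$, and it obtains the reverse inequality from Kato's estimate $\delta_\frak{a}(\mathcal{M}_n,\mathcal{L})\le\delta_\frak{a}(\mathcal{L},\mathcal{M}_n)/(1-\delta_\frak{a}(\mathcal{L},\mathcal{M}_n))$ instead of your principal-angles symmetry; also, your claim ``$d(\zeta)\ge r$'' should be $d(\zeta)\ge\min\{r,\,\dist(\zeta,\Gamma_{a,b}^{0,1})^4/K_{a,b}\}>0$, which is all that is needed.
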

\begin{proof}
For simplicity, let us denote $E=E(\Delta)$. Note that
\[\sigma(A+iE)=\big\{\sigma(A)\backslash\{\lambda_1,\dots,\lambda_d\}\big\}\cup\{\lambda_1+i,\dots,\lambda_d+i\}\]
and the spectral subspace associated to $\lambda+i$ is $\cL(\{\lambda\})$. With $r$ as in \eqref{rhyp}, we have for any $\vert\lambda+i-z\vert=r$,
\[
z\in\rho(A+iE)\quad\textrm{with}\quad\Vert(A+iE-z)^{-1}\Vert=\frac{1}{r}.
\]
It follows easily from Proposition \ref{thm1} that there exists a $c_1>0$ and $N\in\mathbb{N}$ such that, for all $n\ge N$ and any $\vert\lambda+i-z\vert=r$, we have
\[z\in\rho(A+iP_n)\quad\textrm{with}\quad\Vert(A + iP_n-z)^{-1}\Vert\le\frac{1}{c_1}.
\]
Let $u\in\mathcal{H}$ with $\Vert u\Vert=1$ then, using the
identity
\begin{align*}
(A+iP_n - z)^{-1} &= (A+iE - z)^{-1}\\
&\quad+(A+iE - z)^{-1}(iE-iP_n)(A+iP_n - z)^{-1}
\end{align*}
and recalling that $m=\min\sigma(A)$, we obtain 
\begin{align*}
&\Vert(A-m+1)^{\frac{1}{2}}(A+iP_n - z)^{-1}u\Vert\\
&\qquad\le\Vert(A-m+1)^{\frac{1}{2}}(A+iE - z)^{-1}u\Vert\\
&\qquad\quad+\Vert(A-m+1)^{\frac{1}{2}}(A+iE - z)^{-1}(iE-iP_n)(A+iP_n - z)^{-1}u\Vert\\
&\qquad\le\Vert(A-m+1)^{\frac{1}{2}}(A+iE - z)^{-1}\Vert\\
&\qquad\quad+ 2\Vert(A-m+1)^{\frac{1}{2}}(A+iE - z)^{-1}\Vert\Vert(A+iP_n - z)^{-1}u\Vert\\
&\qquad\le \max_{\vert\lambda+i - z\vert=r}\left\{\frac{(2+c_1)\Vert(A-m+1)^{\frac{1}{2}}(A+iE - z)^{-1}\Vert}{c_1}\right\} =:M.
\end{align*}
Now let $u\in\cL(\{\lambda\})$ with $\Vert u\Vert=1$. The above estimate gives
\begin{align}
&\Vert(A+iE - z)^{-1}u -(A+iP_n - z)^{-1}u\Vert_{\frak{a}}\nonumber\\
&\qquad\qquad\qquad\qquad= \Vert(A+iP_n - z)^{-1}(P_n-E)(A+iE - z)^{-1}u\Vert_{\frak{a}}\nonumber\\
&\qquad\qquad\qquad\qquad= \frac{\Vert(A-m+1)^{\frac{1}{2}}(A+iP_n - z)^{-1}(P_n-I)u\Vert}{r}\nonumber\\
&\qquad\qquad\qquad\qquad\le\frac{M\Vert(I-P_n)E\Vert}{r}\nonumber\\
&\qquad\qquad\qquad\qquad=\frac{M\delta(\cL(\Delta),\cL_n)}{r}\label{Mr}.
\end{align}
Set
\begin{align*}
u_n&:=-\frac{1}{2i\pi}\int_{\vert\lambda+i-z\vert=r}(A + iP_n-\zeta)^{-1}u~d\zeta,
\end{align*}
then $u_n\in\mathcal{M}_n(\{\lambda+i\})$. Using estimate \eqref{Mr},
\begin{align*}
\Bigg\Vert&\frac{u}{\Vert u\Vert_{\frak{a}}}-\frac{u_n}{\Vert u\Vert_{\frak{a}}}\Bigg\Vert_{\frak{a}}\\
&\qquad=\frac{1}{2\pi\Vert u\Vert_{\frak{a}}}\Bigg\Vert\int_{\vert\lambda+i-z\vert=r}(A + iE-\zeta)^{-1}u - (A + iP_n-\zeta)^{-1}u~d\zeta\Bigg\Vert_{\frak{a}}\\
&\qquad\le\frac{1}{2\pi}\int_{\vert\lambda+i-z\vert=r}\Big\Vert(A + iE-\zeta)^{-1}u - (A + iP_n-\zeta)^{-1}u\Big\Vert_{\frak{a}}~\vert d\zeta\vert\\
&\qquad=\mathcal{O}\big(\delta(\cL(\Delta),\cL_n)\big)
\end{align*}
hence
\[\delta_{\frak{a}}\big(\cL(\{\lambda\}),\mathcal{M}_n(\{\lambda+i\})\big)=\mathcal{O}(\varepsilon_n).
\]
Furthermore, using Lemma \ref{cor2},
$\dim \mathcal{M}_n(\{\lambda+i\})=\dim \cL(\{\lambda\})=\kappa<\infty$ for all sufficiently large $n$, therefore the following formula holds
\begin{align*}
\delta_{\frak{a}}\big(\mathcal{M}_n(\{\lambda+i\}),\cL(\{\lambda\})\big)\le
\frac{\delta_{\frak{a}}\big(\cL(\{\lambda\}),\mathcal{M}_n(\{\lambda+i\})\big)}{1-\delta_{\frak{a}}\big(\cL(\{\lambda\}),\mathcal{M}_n(\{\lambda+i\})\big)};
\end{align*}
see \cite[Lemma 213]{kato}.
\end{proof}

It follows, from Theorem \ref{est}, that for all sufficiently large $n\in\mathbb{N}$, the operator $A+iP_n$ will have $\kappa$ (repeated) eigenvalues enclosed by the circle $\vert\lambda+i-z\vert=r$. We denote these eigenvalues by $\mu_{n,1},\dots,\mu_{n,\kappa}$. 

\begin{theorem}\label{QQ}
$\max_{1\le j\le\kappa}\vert\lambda+i-\mu_{n,j}\vert = \mathcal{O}(\varepsilon_n^2)$.
\end{theorem}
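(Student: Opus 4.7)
The plan is to derive a Rayleigh-quotient style identity for $\mu_{n,j}-(\lambda+i)$ and then estimate the real and imaginary parts separately; the quadratic rate comes from the standard self-adjoint superconvergence mechanism in the real part together with the trivial observation that the imaginary part is a squared norm of a small quantity.

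First, for each distinct value among the $\mu_{n,j}$, pick a corresponding (normalised) eigenvector $v_{n,j}\in\mathcal{M}_n(\{\lambda+i\})$ of $A+iP_n$. Since $A$ and $P_n$ are both self-adjoint, taking the inner product of $(A+iP_n)v_{n,j}=\mu_{n,j}v_{n,j}$ against $v_{n,j}$ separates real and imaginary parts cleanly, and using $\langle P_nv_{n,j},v_{n,j}\rangle=\Vert v_{n,j}\Vert^{2}-\Vert(I-P_n)v_{n,j}\Vert^{2}$ one obtains
\[
\mu_{n,j}-(\lambda+i)=\frac{\langle(A-\lambda)v_{n,j},v_{n,j}\rangle}{\Vert v_{n,j}\Vert^{2}}-i\,\frac{\Vert(I-P_n)v_{n,j}\Vert^{2}}{\Vert v_{n,j}\Vert^{2}}.
\]

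For the real part, invoke Theorem \ref{est} to choose $u_{n,j}\in\cL(\{\lambda\})$ with $\Vert v_{n,j}-u_{n,j}\Vert_{\frak{a}}=\mathcal{O}(\varepsilon_n)$. Since $(A-\lambda)u_{n,j}=0$ and $A-\lambda$ is self-adjoint, the cross-terms in the expansion about $u_{n,j}$ vanish, giving
\[
\langle(A-\lambda)v_{n,j},v_{n,j}\rangle=\langle(A-\lambda)(v_{n,j}-u_{n,j}),v_{n,j}-u_{n,j}\rangle.
\]
The elementary inequality $|\frak{a}(w,w)-\lambda\Vert w\Vert^{2}|\le(1+|m-1-\lambda|)\Vert w\Vert_{\frak{a}}^{2}$ (which follows directly from the definition of $\Vert\cdot\Vert_{\frak{a}}$) bounds the right-hand side by $\mathcal{O}(\varepsilon_n^{2})$. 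For the imaginary part, the triangle inequality yields
\[
\Vert(I-P_n)v_{n,j}\Vert\le\Vert v_{n,j}-u_{n,j}\Vert+\Vert(I-P_n)u_{n,j}\Vert.
\]
The first term is $\mathcal{O}(\varepsilon_n)$ because $\Vert\cdot\Vert\le\Vert\cdot\Vert_{\frak{a}}$ on $\Dom(\frak{a})$; the second is also $\mathcal{O}(\varepsilon_n)$ because $u_{n,j}\in\cL(\{\lambda\})\subset\cL(\Delta)$ and $\varepsilon_n=\delta(\cL(\Delta),\cL_n)$ gives $\Vert(I-P_n)u_{n,j}\Vert\le\varepsilon_n\Vert u_{n,j}\Vert$. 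Squaring then yields $\mathcal{O}(\varepsilon_n^{2})$.

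Finally $\Vert v_{n,j}\Vert$ is bounded away from zero for large $n$: on the finite-dimensional space $\cL(\{\lambda\})$ the norms $\Vert\cdot\Vert$ and $\Vert\cdot\Vert_{\frak{a}}$ are equivalent, and $v_{n,j}$ is $\frak{a}$-close to a unit vector of that space (after a harmless renormalisation). The main subtlety I anticipate is that $A+iP_n$ need not be normal, so the eigenvalues $\mu_{n,j}$ listed with algebraic multiplicity need not admit a full basis of genuine eigenvectors; however, $\max_{1\le j\le\kappa}|\lambda+i-\mu_{n,j}|$ depends only on the \emph{distinct} eigenvalues, and each distinct eigenvalue carries at least one true eigenvector which may be fed into the argument above, yielding the claimed $\mathcal{O}(\varepsilon_n^{2})$ bound.
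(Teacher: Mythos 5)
Your proof is correct, and it is a genuinely different argument from the one in the paper. The paper forms the $\kappa\times\kappa$ matrices $[L_n]_{p,q}=\langle(A+iP_n)Q_nu_q,Q_nu_p\rangle$ and $[M_n]_{p,q}=\langle Q_nu_q,Q_nu_p\rangle$ (with $Q_n$ the $\frak{a}$-orthogonal projection onto $\mathcal{M}_n(\{\lambda+i\})$), shows every entry of $L_nM_n^{-1}$ equals $(\lambda+i)\delta_{pq}+\mathcal{O}(\varepsilon_n^2)$ by exploiting the $\frak{a}$-orthogonality of $Q_n$, and finishes with Gershgorin; this packages all $\kappa$ eigenvalues, including any Jordan structure, in one stroke. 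Your approach instead works with a single genuine eigenvector per distinct $\mu_{n,j}$ and the exact Rayleigh-quotient decomposition
\[
\mu_{n,j}-(\lambda+i)=\langle(A-\lambda)v_{n,j},v_{n,j}\rangle-i\Vert(I-P_n)v_{n,j}\Vert^{2},
\]
handling the two terms separately: the vanishing cross-terms (via $(A-\lambda)u_{n,j}=0$ and symmetry of $A-\lambda$) give the quadratic gain in the real part, and the imaginary part is manifestly a squared small quantity. Your route is more elementary and makes the source of the $\varepsilon_n^2$ transparent in each coordinate, at the cost of having to explicitly dismiss the non-normality/Jordan-block issue --- which you do correctly, since the maximum over repeated $\mu_{n,j}$ equals the maximum over distinct ones, and every distinct eigenvalue carries a true eigenvector. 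A small stylistic remark: since you already take $v_{n,j}$ normalised in $\mathcal{H}$, the closing paragraph about $\Vert v_{n,j}\Vert$ being bounded away from zero is a red herring; what you actually need (and what your renormalisation argument does supply) is that $\Vert v_{n,j}\Vert_{\frak{a}}$ stays bounded so that the $\frak{a}$-distance from Theorem~\ref{est} transfers, with a uniform constant, to your chosen representative $v_{n,j}$.
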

\begin{proof}
Let $u_1,\dots,u_\kappa$ be an orthonormal basis for $\cL(\{\lambda\})$. Let $Q_n$ be the orthogonal projection from $\mathcal{H}_{\frak{a}}$ onto $\mathcal{M}_n(\{\lambda+i\})$ and set $u_{n,j}=Q_nu_j$ for each $1\le j\le \kappa$. By Theorem \ref{est},
\[
\Vert u_j-u_{n,j}\Vert_{\frak{a}}=\Vert(I-Q_n)u_j\Vert_{\frak{a}} = \dist_{\frak{a}}\big(u_j,\mathcal{M}_n(\{\lambda+i\})\big) = \mathcal{O}(\varepsilon_n),
\]
and we may assume that $Q_n$ maps $\cL(\{\lambda\})$ one-to-one onto $\mathcal{M}_n(\{\lambda+i\})$. 

Consider the $\kappa\times\kappa$ matrices
\[
[L_n]_{p,q}=\langle(A+iP_n)u_{n,q},u_{n,p} \rangle\quad\textrm{and}\quad
[M_n]_{p,q}=\langle u_{n,q},u_{n,p} \rangle.
\]
Evidently, $M_n$ converges to the $\kappa\times\kappa$ identity matrix and $\sigma(L_nM_n^{-1})$ is precisely the set $\{\mu_{n,1},\dots,\mu_{n,\kappa}\}$.
We have
\[
[L_n]_{p,q} = \frak{a}(u_{n,q},u_{n,p}) + i\langle P_n u_{n,q},u_{n,p}\rangle.
\]
Consider the first term on the right hand side,
\begin{align*}
\frak{a}(u_{n,q},u_{n,p}) &= \frak{a}((Q_n-I)u_q,u_p) + \frak{a}((Q_n-I)u_q,(Q_n-I)u_p)\\
&\quad+\frak{a}(u_q,(Q_n-I)u_p)+\frak{a}(u_q,u_p)\\
&=\lambda\langle(Q_n-I)u_q,u_p\rangle + \frak{a}((Q_n-I)u_q,(Q_n-I)u_p)\\
&\quad+\lambda\langle u_q,(Q_n-I)u_p\rangle+\lambda\delta_{pq}
\end{align*}
where
\begin{align*}
\vert(\lambda-m+1)\langle u_q,(Q_n-I)u_p\rangle\vert&=\vert\frak{a}(u_q,(Q_n-I)u_p)\\
&\quad+(1-m)\langle u_q,(Q_n-I)u_p\rangle\vert\\
&=\vert\langle u_q,(Q_n-I)u_p\rangle_{\frak{a}}\vert\\
&=\vert\langle (Q_n-I)u_q,(Q_n-I)u_p\rangle_{\frak{a}}\vert\\
&\le\Vert(Q_n-I)u_q\Vert_{\frak{a}}\Vert(Q_n-I)u_p\Vert_{\frak{a}},
\end{align*}
hence $\frak{a}(u_{n,q},u_{n,p}) = \lambda\delta_{pq} + \mathcal{O}(\varepsilon_n^2)$.
Similarly,
\begin{align*}
\langle P_nu_{n,q},u_{n,p}\rangle&=\langle P_{n}(Q_n-I)u_q,(Q_n-I)u_p\rangle + \langle (Q_n-I)u_q,(P_{n}-I)u_p\rangle\\
&\quad+\langle (Q_n-I)u_q,u_p\rangle+\langle (P_{n}-I)u_q,(Q_n-I)u_p\rangle\\
&\quad+\langle u_q,(Q_n-I)u_p\rangle
+\langle(P_{n}-I)u_q,u_p\rangle+\delta_{pq}
\end{align*}
and
\begin{align*}
[M_n]_{p,q}&= \langle(Q_n-I)u_{q},(Q_n-I)u_{p} \rangle + \langle(Q_n-I)u_{q},u_{p} \rangle+ \langle u_{q},(Q_n-I)u_{p} \rangle\\
&\quad + \delta_{pq}.
\end{align*}
Hence
\[
i\langle P_nu_{n,q},u_{n,p}\rangle = i\delta_{pq} + \mathcal{O}(\varepsilon_n^2)\quad\textrm{and}\quad[M_n]_{p,q} = \delta_{pq}+\mathcal{O}(\varepsilon_n^2).
\]
Then
\[
[L_n]_{p,q}=(\lambda+i)\delta_{p,q}+\mathcal{O}(\varepsilon_n^2)\quad\textrm{and}\quad [M_n]^{-1}_{p,q} = \delta_{pq}+\mathcal{O}(\varepsilon_n^2),
\]
and we deduce that 
$[L_nM_n^{-1}]_{p,q}=(\lambda+i)\delta_{p,q}+\mathcal{O}(\varepsilon_n^2)$.
The result follows from the Gershgorin circle theorem.
\end{proof}

\section{The Perturbation Method}
 
The perturbation method, for locating $\sigma_{\dis}(A)$, was introduced in \cite{mar1} where it was formulated for Schr\"odinger operators. A more general version was presented in \cite{me} which required \'a priori knowledge about the location of gaps in the essential spectrum. In this section we present a new perturbation method which requires no \'a priori information and converges rapidly to $\sigma_{\dis}(A)$. In fact, our examples suggest that the method will actually capture the whole of $\sigma(A)$.

The idea is to perturb eigenvalues off the real line by adding a perturbation $iP$ where $P$ is a finite-rank orthogonal projection. The results from the previous sections allow us to perturb eigenvalues very precisely. The perturbed eigenvalues and their multiplicities may then be approximated with the Galerkin method without incurring spectral pollution; see \cite[Theorem 2.5 \& Theorem 2.9]{me}. As above, $(P_n)$ denotes a sequence of finite-rank orthogonal projections each with range $\mathcal{L}_n$. We shall assume that
\begin{equation}\label{subspaces}
\forall u\in\Dom(\frak{a})\quad\exists u_n\in\cL_n:\quad\Vert u-u_n\Vert_{\frak{a}}\to0.
\end{equation}
This is the usual hypothesis for a sequence of trial spaces
when using the Galerkin method. For sufficiently large $n$ we have, by Proposition \ref{thm1}, that
\[
\mathcal{U}_{a,b}^{0,1}\cap\sigma(A+iP_n)
\]
will consist of eigenvalues in a small neighbourhood of $\Gamma_{a,b}^{0,1}$, and, by Theorem \ref{QQ}, eigenvalues within $\varepsilon_n^2$ neighbourhoods of the $\lambda_j+i$; recall that
\begin{equation}\label{epg}
\varepsilon_{n}=\delta(\mathcal{L}(\Delta),\cL_{n}).
\end{equation}
We stress that $\varepsilon_n^2$ is extremely small; indeed, if pollution does not occur and we use the Galerkin method to approximate the eigenvalue $\lambda$, then our approximation will be of the order $\epsilon_n^2$ where
\begin{equation}\label{epb}
\epsilon_n:=\delta_{\frak{a}}(\mathcal{L}(\Delta),\cL_{n}).
\end{equation}

In this section we are concerned with the approximation of the eigenvalues of $A+iP_n$ using the Galerkin method. To this end, for our fixed $\lambda\in\{\lambda_1,\dots,\lambda_d\}$, let us fix an $N\in\mathbb{N}$ such that
\[
\dim\mathcal{M}_{n}(\{\lambda+i\})=\dim\cL(\{\lambda\})=\kappa\quad\forall n\ge N;
\]
such an $N$ is assured by Theorem \ref{est}.

Associated to the restriction of the form $\frak{a}$ to the trial space $\cL_k$ is a self-adjoint operator acting in the Hilbert space $\cL_k$; denote this operator and  corresponding spectral measure by $A_k$ and $E_k$, respectively. The Galerkin eigenvalues of $A+iP_n$ with respect to the trial space $\mathcal{L}_k$ are denoted $\sigma(A+iP_n,\mathcal{L}_k)$ and are precisely the eigenvalues of 
\[
A_k+iP_kP_n:\mathcal{L}_k\to\mathcal{L}_k.
\]
For our $\lambda\in\Delta$, we denote by $\mathcal{M}_{n,k}(\{\lambda+i\})$ the spectral subspace associated to the operator $A_k + iP_kP_n:\cL_k\to\cL_k$ and those eigenvalues in a neighbourhood of $\lambda+i$. Then, for a fixed $n\ge N$, we have for all sufficiently large $k$
\begin{equation}\label{eqdims}
\dim\mathcal{M}_{n,k}(\{\lambda+i\})=\dim\mathcal{M}_{n}(\{\lambda+i\})=\dim\cL(\{\lambda\})=\kappa.
\end{equation}
We now study the convergence properties of $\mathcal{M}_{n,k}(\{\lambda+i\})$ and associated eigenvalues, where our main convergence results are expressed in terms of $\varepsilon_{k}$ and $\epsilon_{n}$ from \eqref{epg} and \eqref{epb}, respectively.
We note that, using Theorem \ref{est},
\begin{equation}\label{dbounds}
\delta_{\frak{a}}(\mathcal{M}_n(\{\lambda+i\}),\cL_{k})\le\delta_{\frak{a}}(\mathcal{M}_n(\{\lambda+i\}),\cL(\{\lambda\})) + \delta_{\frak{a}}(\mathcal{L}(\{\lambda\}),\cL_{k})\le c_0\varepsilon_n + \epsilon_k
\end{equation}
where $c_0>0$ is independent of $n$. 

\begin{lemma}\label{l2d}
There exists a constant $c_2>0$, independent of $n\ge N$, such that
\[
\max_{\vert\lambda+i-z\vert=r}\Vert(A_k + iP_kP_n - z)^{-1}\Vert\le c_2\quad\textrm{for all sufficiently large } k.
\]
\end{lemma}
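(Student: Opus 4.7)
The plan is to combine the uniform-in-$n$ resolvent bound from Proposition \ref{thm1} with a Galerkin consistency argument. Proposition \ref{thm1} already supplies a constant $c_1>0$, independent of $n\ge N$, with $\Vert(A+iP_n-z)^{-1}\Vert\le 1/c_1$ for every $z$ on the circle $\vert\lambda+i-z\vert=r$. The natural candidate is therefore $c_2:=2/c_1$, and the task reduces to showing that, for each $n\ge N$, the Galerkin truncation $A_k+iP_kP_n$ attains this bound once $k$ is sufficiently large (with the threshold allowed to depend on $n$, but $c_2$ not).

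Fix $n\ge N$ and argue by contradiction: if no such threshold exists, extract sequences $k_j\to\infty$, $z_j$ with $\vert\lambda+i-z_j\vert=r$, and unit vectors $u_j\in\mathcal{L}_{k_j}$ such that $(A_{k_j}+iP_{k_j}P_n-z_j)u_j\to 0$ in $\mathcal{H}$. Testing against $u_j$, noting that $\langle P_{k_j}P_nu_j,u_j\rangle=\Vert P_nu_j\Vert^2$, and splitting into real and imaginary parts gives $\frak{a}(u_j,u_j)-\Re z_j\to 0$ and $\Vert P_nu_j\Vert^2-\Im z_j\to 0$. The first of these shows $(\Vert u_j\Vert_{\frak{a}})_j$ is bounded, so after passing to subsequences $z_j\to z_0$ with $\vert\lambda+i-z_0\vert=r$ and $u_j\rightharpoonup u$ weakly in $\mathcal{H}_{\frak{a}}$ (and therefore weakly in $\mathcal{H}$).

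Next I would pass to the limit in the Galerkin equation. Given any $v\in\Dom(\frak{a})$, the density hypothesis \eqref{subspaces} supplies $v_{k_j}\in\mathcal{L}_{k_j}$ with $\Vert v-v_{k_j}\Vert_{\frak{a}}\to 0$. Testing against $v_{k_j}$ and exploiting weak-strong pairing in $\mathcal{H}_{\frak{a}}$ for the form term and in $\mathcal{H}$ for the projection and scalar terms (using that $P_nv_{k_j}\to P_nv$ strongly since $P_n$ is bounded) yields
\[
\frak{a}(u,v)+i\langle P_nu,v\rangle-z_0\langle u,v\rangle=0\quad\forall v\in\Dom(\frak{a}),
\]
which is the form identity for $(A+iP_n-z_0)u=0$. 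Since Proposition \ref{thm1} places $z_0$ in $\rho(A+iP_n)$, this forces $u=0$.

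The step I expect to be the main obstacle, and on which the argument hinges, is deriving a contradiction from $u=0$. Here the finite-rank, and hence compact, nature of $P_n$ is decisive: weak convergence $u_j\rightharpoonup 0$ in $\mathcal{H}$ together with compactness of $P_n$ forces $\Vert P_nu_j\Vert\to 0$. On the other hand, the imaginary-part computation above gave $\Vert P_nu_j\Vert^2\to\Im z_0$, and the geometric constraint $\vert\Im z_0-1\vert\le r<1/2$ from \eqref{rhyp} forces $\Im z_0>1/2>0$. This contradiction completes the argument, and since $c_2=2/c_1$ depends only on the constant $c_1$ from Proposition \ref{thm1}, it is indeed independent of $n\ge N$.
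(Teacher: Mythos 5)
The overall scheme is sound in spirit and parallels the paper's: argue by contradiction, pass to a weak limit of near-eigenvectors of the Galerkin truncation, use Proposition \ref{thm1} to kill the limit, and then exploit compactness of the finite-rank $P_n$ against the identity $\Vert P_nu_j\Vert^2\approx\Im z_j>1/2$ to force a contradiction. But your opening extraction has a gap that propagates through everything that follows.

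With $c_2=2/c_1$ fixed, the negation of the lemma for a given $n$ only yields unit vectors $u_j\in\mathcal{L}_{k_j}$ satisfying $\Vert(A_{k_j}+iP_{k_j}P_n-z_j)u_j\Vert<c_1/2$; it does not produce residuals tending to zero. Every subsequent step in your argument uses $\to 0$: without it, at the weak limit you obtain only $\Vert(A+iP_n-z_0)u\Vert\le c_1/2$, hence by Proposition \ref{thm1} $\Vert u\Vert\le 1/2$ rather than $u=0$; compactness then gives $\Vert P_nu_j\Vert^2\to\Vert P_nu\Vert^2$, which can be as large as $1/4$, and the imaginary-part identity is itself only approximate to within $c_1/2$. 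Since necessarily $c_1\le r<1/2$ (the resolvent norm of $A+iP_n$ is at least $\dist(z,\sigma(A+iP_n))^{-1}$, and for large $n$ this spectrum comes arbitrarily close to $\lambda+i$), the intended contradiction is not forced. Alternatively, if you read the negation as a blow-up $\max_z\Vert(A_k+iP_kP_n-z)^{-1}\Vert\to\infty$ along a subsequence, then residuals do tend to zero and $u=0$ follows, but this only establishes that some $n$-dependent constant eventually works and says nothing about $2/c_1$; uniformity in $n$, which is the entire content of the lemma, is then lost.

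Your approach is nevertheless repairable without sacrificing its economy: take $c_2=\kappa/c_1$ with $\kappa$ large enough that $(1-r)>\kappa^{-2}+c_1\kappa^{-1}$. The negation gives residuals $<c_1/\kappa$, the weak limit satisfies $\Vert u\Vert\le\kappa^{-1}$, and then $\Im z_0\le\Vert P_nu\Vert^2+c_1/\kappa\le\kappa^{-2}+c_1\kappa^{-1}<1-r\le\Im z_0$ is the contradiction; $\kappa$ depends only on $c_1$ and $r$, so independence of $n$ is preserved. With this fix your route is actually more streamlined than the paper's, which introduces a diagonal sequence $(n_p,\gamma_p)$ with $\gamma_p\to\infty$, constructs an explicit candidate limit $x=-(A-z)^{-1}w$ rather than invoking weak compactness, and then tracks $\gamma_p$-dependent errors through a long chain of estimates. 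As written, however, with $c_2=2/c_1$ and the unjustified $\to 0$, the proof has a genuine gap.
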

\begin{proof}
We assume that the assertion is false. Then there exist sequences $(n_p)$ and $(\gamma_p)$ with $\gamma_p\to\infty$, such that, for each fixed $p$ there is a subsequence $k_q$ with
\[
\max_{\vert\lambda+i-z\vert=r}\Vert(A_{k_q} + iP_{k_q}P_{n_p} - z)^{-1}\Vert> 2\gamma_p\quad\textrm{for all sufficiently large }q.
\]
Let us fix a $p$. We may assume, without loss of generality, that
\[
\max_{\vert\lambda+i-z\vert=r}\Vert(A_{k} + iP_{k}P_{n_p} - z)^{-1}\Vert> 2\gamma_p\quad\textrm{for all sufficiently large }k.
\]
Let $(z_k)$ be a sequence with $\vert\lambda+i-z_k\vert=r$ and
\[
\Vert(A_{k} + iP_{k}P_{n_p} - z_k)^{-1}\Vert> 2\gamma_p\quad\textrm{for all sufficiently large }k.
\]
The sequence $z_k$ has a convergent subsequence, without loss of generality, we assume that $z_k\to z$, where $\vert\lambda+i-z\vert=r$. For all sufficiently large $k$, there exists a normalised vector $u_k$ with
\[
\Vert(A_{k} + iP_{k}P_{n_p} - z_k)u_k\Vert<\frac{1}{2\gamma_p}.
\]
Then
\begin{align*}
\Vert(A_{k} + iP_{k}P_{n_p} - z)u_k\Vert &\le\Vert(A_{k} + iP_{k}P_{n_p} - z_k)u_k\Vert + \vert z_k-z\vert\\
&<\frac{1}{2\gamma_p} + \vert z_k-z\vert,
\end{align*}
and therefore
\[
\max_{\efrac{v\in\cL_{k}}{\Vert v\Vert=1}}\vert\frak{a}(u_k,v) + i\langle P_{n_p}u_k,v\rangle - z\langle u_k,v\rangle\vert <\frac{1}{\gamma_p}\quad\textrm{for all sufficiently large }k.
\]
The sequence $P_{n_p}u_k$ has a convergent subsequence. We assume, without loss of generality, that $iP_{n_p}u_k\to w$. Therefore 
\begin{align*}
\max_{\efrac{v\in\cL_{k}}{\Vert v\Vert=1}}\vert\frak{a}(u_k,v) + \langle w,v\rangle - z\langle u_k,v\rangle\vert <\frac{1}{\gamma_p} + \alpha_k\quad\textrm{for some} \quad 0\le\alpha_k\to 0.
\end{align*}
Denote by $\hat{P}_k$ the orthogonal projection from $\mathcal{H}_{\frak{a}}$ onto $\mathcal{L}_k$. Let $x=-(A-z)^{-1}w$ and set $x_{k}=\hat{P}_{k}x$, then for any $v\in\cL_k$
\begin{align*}
\frak{a}(x_{k},v) - z\langle x_{k},v\rangle &=
\frak{a}(x,v) - z\langle x,v\rangle - \frak{a}((I-\hat{P}_{k})x,v) + z\langle(I-\hat{P}_{k})x,v\rangle\\
&=\frak{a}(x,v) - z\langle x,v\rangle + (z-m+1)\langle(I-\hat{P}_{k})x,v\rangle\\
&=-\langle w,v\rangle + (z-m+1)\langle(I-\hat{P}_{k})x,v\rangle.
\end{align*}
We deduce that
\begin{align*}
\max_{\efrac{v\in\cL_{k}}{\Vert v\Vert=1}}\vert\frak{a}(u_k-x_{k},v) - z\langle u_k-x_{k},v\rangle\vert <\frac{1}{\gamma_p} + \beta_k\quad\textrm{for some} \quad 0\le\beta_k\to 0,
\end{align*}
hence
\[
\Vert u_k-x_{k}\Vert <\left(\frac{1}{\gamma_p} + \beta_k\right)\Big/\Im z\le \frac{1}{\gamma_p(1-r)} + \frac{\beta_k}{(1-r)}
\]
and therefore
\begin{equation}\label{psib}
\Vert x\Vert\leftarrow\Vert x_{k}\Vert > 1 -\frac{1}{\gamma_p(1-r)} - \frac{\beta_k}{(1-r)}\to 1-\frac{1}{\gamma_p(1-r)}.
\end{equation}
Let $y=(A+iP_{n_p}-z)x= -w - iP_{n_p}(A-z)^{-1}w$. Since $iP_{n_p}u_k\to w$ implies that $w\in\cL_{n_p}\subset\mathcal{H}_{\frak{a}}$, we deduce that $y\in\mathcal{H}_{\frak{a}}$ and we set $y_{k}=\hat{P}_{k}y$. Using \eqref{psib} and with $c_1>0$ as in the proof of Theorem \ref{est},
\begin{align*}
\vert\frak{a}(x_{k},y_{k}) + i\langle P_{n_p}x_{k},y_{k}\rangle - z\langle x_{k},y_{k}\rangle\vert &\to \Vert(A+iP_{n_p}-z) x\Vert^2\\&\ge c_1^2\left(1-\frac{1}{\gamma_p(1-r)}\right)^2.
\end{align*}
Furthermore, using the estimates above we have
\begin{align*}
\vert\frak{a}(x_{k},y_{k}) + i\langle &P_{n_p}x_{k},y_{k}\rangle - z\langle x_{k},y_{k}\rangle\vert\\
&=\vert\frak{a}(x_{k}-u_k,y_k) + i\langle P_{n_p}(x_{k}-u_k),y_{k}\rangle - z\langle x_{k}-u_k,y_{k}\rangle\\
&\quad+\frak{a}(u_k,y_{k}) + i\langle P_{n_p}u_k,y_{k}\rangle - z\langle u_k,y_{k}\rangle\vert\\
&\le \vert\frak{a}(x_{k}-u_k,y_k) - z\langle x_{k}-u_k,y_{k}\rangle\vert + \vert\langle P_{n_p}(x_{k}-u_k),y_{k}\rangle\vert\\
&\quad+\vert\frak{a}(u_k,y_{k}) + i\langle P_{n_p}u_k,y_{k}\rangle - z\langle u_k,y_{k}\rangle\vert\\
&<\left(\frac{1}{\gamma_p} + \beta_k\right)\Vert y_{k}\Vert +\left(\frac{1}{\gamma_p(1-r)} + \frac{\beta_k}{(1-r)}\right)\Vert y_{k}\Vert + \frac{1}{\gamma_p}\Vert y_{k}\Vert.
\end{align*}
Since $y=(A+iP_{n_p}-z)x = -w - iP_{n_p}(A-z)^{-1}w$ where $\Vert w\Vert\le 1$,
\[\Vert y_k\Vert\to\Vert y\Vert=\Vert-w - iP_{n_p}(A-z)^{-1}w\Vert\le\Vert w\Vert+\Vert iP_{n_p}(A-z)^{-1}w\Vert\le 1+\frac{1}{1-r},
\]
hence
\begin{align*}
\left(\frac{1}{\gamma_p} + \beta_k\right)\Vert y_{k}\Vert +\left(\frac{1}{\gamma_p(1-r)} + \frac{\beta_k}{1-r}\right)&\Vert y_{k}\Vert + \frac{1}{\gamma_p}\Vert y_{k}\Vert\\
&\to\left(\frac{2}{\gamma_p}+\frac{1}{\gamma_p(1-r)}\right)\Vert y\Vert\\
&\le \left(\frac{2}{\gamma_p}+\frac{1}{\gamma_p(1-r)}\right)\left(1+\frac{1}{1-r}\right).
\end{align*}
Therefore, we have
\begin{align*}
c_1^2\left(1-\frac{1}{\gamma_p(1-r)}\right)^2&\le\Vert(A+iP_{n_p}-z)x\Vert^2\\
&\leftarrow \vert\frak{a}(x_{k},y_{k}) + i\langle P_{n_p}x_{k},y_{k}\rangle - z\langle x_{k},y_{k}\rangle\vert\\
&\le\left(\frac{1}{\gamma_p} + \beta_k\right)\Vert y_{k}\Vert +\left(\frac{1}{\gamma_p(1-r)} + \frac{\beta_k}{1-r}\right)\Vert y_{k}\Vert\\
&\quad + \frac{1}{\gamma_p}\Vert y_{k}\Vert\\
&\to\left(\frac{2}{\gamma_p}+\frac{1}{\gamma_p(1-r)}\right)\Vert y\Vert\\
&\le \left(\frac{2}{\gamma_p}+\frac{1}{\gamma_p(1-r)}\right)\left(1+\frac{1}{1-r}\right).
\end{align*}
Evidently, the left hand side is larger than the right hand side for all sufficiently large $p$. The result follows from the contradiction.
\end{proof}

\begin{theorem}\label{limlem1}
There exist constants $c_3,c_4>0$, both independent of $n\ge N$, such that
\begin{equation}\label{sscon1}
\hat{\delta}_{\frak{a}}\big(\mathcal{M}_{n}(\{\lambda+i\}),\mathcal{M}_{n,k}(\{\lambda+i\})\big)\le c_3\delta_{\frak{a}}(\mathcal{M}_n(\{\lambda+i\}),\cL_{k})
\end{equation}
and
\begin{equation}\label{sscon2}
\hat{\delta}_{\frak{a}}\big(\mathcal{M}_{n,k}(\{\lambda+i\}),\mathcal{L}(\{\lambda\})\big)\le c_4(\varepsilon_n+\epsilon_k)
\end{equation}
for all sufficiently large $k$.
\end{theorem}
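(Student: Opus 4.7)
The plan is to express both spectral subspaces as ranges of Riesz--Dunford projections on the circle $\Gamma_r:=\{\zeta:|\lambda+i-\zeta|=r\}$ and compare them via a C\'ea-type Galerkin quasi-optimality estimate in the $\frak{a}$-norm. Define
\[
Q_n:=-\frac{1}{2\pi i}\oint_{\Gamma_r}(A+iP_n-\zeta)^{-1}\,d\zeta\quad\text{and}\quad
Q_{n,k}:=-\frac{1}{2\pi i}\oint_{\Gamma_r}(A_k+iP_kP_n-\zeta)^{-1}\,d\zeta.
\]
The proof of Theorem \ref{est} supplies $\|(A+iP_n-\zeta)^{-1}\|\le 1/c_1$ on $\Gamma_r$ with $c_1$ independent of $n\ge N$, and Lemma \ref{l2d} gives the analogous uniform bound $c_2$ for the Galerkin resolvent for all sufficiently large $k$. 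Hence $Q_n$ and $Q_{n,k}$ are well-defined, with ranges $\mathcal{M}_n(\{\lambda+i\})$ and $\mathcal{M}_{n,k}(\{\lambda+i\})$, both of dimension $\kappa$ by \eqref{eqdims}.

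The heart of the proof of \eqref{sscon1} is a resolvent comparison. Fix $\zeta\in\Gamma_r$ and $u\in\mathcal{M}_n(\{\lambda+i\})$ with $\|u\|_\frak{a}=1$, and set $u_k:=\hat{P}_ku$, $w:=(A+iP_n-\zeta)^{-1}u$ and $w_k:=(A_k+iP_kP_n-\zeta)^{-1}u_k$, where $\hat{P}_k$ is the orthogonal projection in $\mathcal{H}_\frak{a}$ onto $\cL_k$. The sesquilinear form
\[
b_\zeta(x,y):=\frak{a}(x,y)+i\langle P_nx,y\rangle-\zeta\langle x,y\rangle
\]
is bounded on $\mathcal{H}_\frak{a}$ uniformly in $\zeta\in\Gamma_r$, and the uniform operator-norm bound from Lemma \ref{l2d} translates, via the inequality $\|\cdot\|\le\|\cdot\|_\frak{a}$, into a uniform discrete inf-sup condition for $b_\zeta$ on $\cL_k$ in the $\frak{a}$-norm. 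Since $b_\zeta(w,v)=\langle u,v\rangle$ for all $v\in\Dom(\frak{a})$ and $b_\zeta(w_k,v)=\langle u_k,v\rangle$ for all $v\in\cL_k$, a modified C\'ea estimate yields
\[
\|w-w_k\|_\frak{a}\le C\bigl(\dist_\frak{a}(w,\cL_k)+\|u-u_k\|\bigr).
\]
Because $\mathcal{M}_n(\{\lambda+i\})$ is an invariant subspace for the resolvent, $w\in\mathcal{M}_n(\{\lambda+i\})$; and the bound on $\|(A-m+1)^{1/2}(A+iP_n-\zeta)^{-1}\|$ from the proof of Theorem \ref{est} forces $\|w\|_\frak{a}$ to be uniformly bounded on $\Gamma_r$, so $\dist_\frak{a}(w,\cL_k)\le M\delta_\frak{a}(\mathcal{M}_n(\{\lambda+i\}),\cL_k)$. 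Combined with $\|u-u_k\|\le\|u-u_k\|_\frak{a}=\dist_\frak{a}(u,\cL_k)$, this gives $\|w-w_k\|_\frak{a}\le C'\delta_\frak{a}(\mathcal{M}_n(\{\lambda+i\}),\cL_k)$ uniformly on $\Gamma_r$.

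Integrating around $\Gamma_r$ and using $Q_nu=u$ then yields
\[
\|u-Q_{n,k}u_k\|_\frak{a}\le c_3\,\delta_\frak{a}(\mathcal{M}_n(\{\lambda+i\}),\cL_k),
\]
and since $Q_{n,k}u_k\in\mathcal{M}_{n,k}(\{\lambda+i\})$ this establishes the one-sided gap estimate. Because $\dim\mathcal{M}_n(\{\lambda+i\})=\dim\mathcal{M}_{n,k}(\{\lambda+i\})=\kappa<\infty$ by \eqref{eqdims}, \cite[Lemma 213]{kato} upgrades the bound to the two-sided gap \eqref{sscon1}. Finally, \eqref{sscon2} follows by the triangle inequality
\[
\hat{\delta}_\frak{a}\bigl(\mathcal{M}_{n,k}(\{\lambda+i\}),\cL(\{\lambda\})\bigr)\le\hat{\delta}_\frak{a}\bigl(\mathcal{M}_{n,k}(\{\lambda+i\}),\mathcal{M}_n(\{\lambda+i\})\bigr)+\hat{\delta}_\frak{a}\bigl(\mathcal{M}_n(\{\lambda+i\}),\cL(\{\lambda\})\bigr),
\]
combining \eqref{sscon1} with \eqref{dbounds} for the first term and Theorem \ref{est} for the second. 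The main obstacle is the C\'ea step: promoting the $\mathcal{H}$-norm resolvent bound of Lemma \ref{l2d} to a uniform discrete inf-sup condition for the non-coercive form $b_\zeta$ in the $\frak{a}$-norm, valid uniformly in $\zeta\in\Gamma_r$, in $n\ge N$, and in $k$ large.
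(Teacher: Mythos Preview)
Your overall architecture---Riesz projections over $\Gamma_r$, a resolvent comparison, contour integration, then the equal-dimension upgrade to the symmetric gap, and the triangle inequality for \eqref{sscon2}---matches the paper exactly. The difficulty you flag at the end is real, however, and it is not a detail: the implication ``$\mathcal{H}$-norm resolvent bound $\Rightarrow$ $\frak{a}$-norm inf-sup for $b_\zeta$'' does not follow from $\|\cdot\|\le\|\cdot\|_{\frak a}$. That inequality goes the wrong way for what you need. Concretely, from Lemma~\ref{l2d} you get $\sup_{\|y\|=1}|b_\zeta(x,y)|=\|(A_k+iP_kP_n-\zeta)x\|\ge c_2^{-1}\|x\|$, but this neither bounds $\|x\|_{\frak a}$ from below nor controls $\|y\|_{\frak a}$ for the maximising $y$; a G{\aa}rding-plus-compactness argument would recover an inf-sup but not with constants uniform in $n$ and $k$. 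So as written the C\'ea step is a genuine gap.

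The paper sidesteps the inf-sup issue entirely by a two-stage bootstrap that you should adopt. With $x(\zeta)=(A+iP_n-\zeta)^{-1}u$ one first computes, for $v\in\cL_k$,
\[
\langle A_k(\zeta)\hat P_kx(\zeta)-u,v\rangle
= i\langle P_n(\hat P_k-I)x(\zeta),v\rangle-(\zeta-m+1)\langle(\hat P_k-I)x(\zeta),v\rangle,
\]
so the \emph{residual} $A_k(\zeta)\hat P_kx(\zeta)-P_ku$ is controlled in the $\mathcal{H}$-norm by $\|(\hat P_k-I)x(\zeta)\|$. Lemma~\ref{l2d} then gives the $\mathcal{H}$-norm error bound $\|A_k(\zeta)^{-1}P_ku-\hat P_kx(\zeta)\|\le c_2(1+|\zeta-m+1|)\|(\hat P_k-I)x(\zeta)\|$. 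The lift to the $\frak a$-norm does not use any inf-sup: writing $e=A_k(\zeta)^{-1}P_ku-\hat P_kx(\zeta)$ and using the equation $A_k(\zeta)e=P_ku-A_k(\zeta)\hat P_kx(\zeta)$, one expands
\[
\|e\|_{\frak a}^2=(\frak a-m)[e]+\|e\|^2=\langle P_ku-A_k(\zeta)\hat P_kx(\zeta),e\rangle-\langle(iP_kP_n-\zeta)e,e\rangle+(1-m)\|e\|^2,
\]
every term of which is already controlled by $\mathcal{H}$-norms. This yields $\|A_k(\zeta)^{-1}P_ku-(A+iP_n-\zeta)^{-1}u\|_{\frak a}\le K\,\delta_{\frak a}(\mathcal{M}_n(\{\lambda+i\}),\cL_k)$ with $K$ independent of $n\ge N$ and $\zeta\in\Gamma_r$, and the rest of your argument goes through unchanged. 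Note also that the paper applies the Riesz projection to $P_ku$ rather than $\hat P_ku$; this is harmless but slightly simplifies the residual identity above.
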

\begin{proof}
First we prove \eqref{sscon1}. Let $u\in\mathcal{M}_{n}(\{\lambda+i\})$ with $\Vert u\Vert=1$. For $\vert\lambda+i-z\vert=r$, we denote
\[A_k(z)=A_k+iP_kP_n-z\quad\textrm{and}\quad x(z)=(A + iP_n-z)^{-1}u\in\mathcal{M}_{n}(\{\lambda+i\}).
\]
Then, with $c_1>0$ as in the proof of Theorem \ref{est}, we have $\Vert x(z)\Vert\le c_1^{-1}$ and therefore
\begin{align*}
\Vert x(z)\Vert_{\frak{a}}^2 &= \frak{a}[x(z)] -(m-1)\Vert x(z)\Vert^2\\
&= \langle Ax(z),x(z)\rangle -(m-1)\Vert x(z)\Vert^2\\
&= \langle A(A + iP_n-z)^{-1}u,x(z)\rangle -(m-1)\Vert x(z)\Vert^2\\
&= \langle u,x(z)\rangle -\langle(iP_n-z)x(z),x(z)\rangle-(m-1)\Vert x(z)\Vert^2\\
&\le \Vert x(z)\Vert + (2+m+\vert z\vert)\Vert x(z)\Vert^2\\
&\le\frac{1}{c_1} + \frac{2+m+\vert z\vert}{c_1^2}.
\end{align*}
Hence
\begin{equation}\label{sbd}
\Vert(A + iP_n-z)^{-1}u\Vert_{\frak{a}} = \Vert x(z)\Vert_{\frak{a}}\le K_1
\end{equation}
for constant $K_1>0$ which is independent of $n\ge N$ and $\vert\lambda+i-z\vert=r$. Let $v\in\cL_k$ with $\Vert v\Vert=1$, then
\begin{align*}
\langle A_k(z)\hat{P}_kx(z)-u,v\rangle
&= \frak{a}(\hat{P}_kx(z),v) + i\langle P_{n}\hat{P}_kx(z),v\rangle - z\langle \hat{P}_kx(z),v\rangle - \langle u,v\rangle\\
&=i\langle P_{n}(\hat{P}_k-I)x(z),v\rangle - (z-m+1)\langle (\hat{P}_k-I)x(z),v\rangle.
\end{align*}
Hence
\begin{displaymath}
\Vert A_k(z)\hat{P}_kx(z) - P_ku\Vert
\le\big(1+\vert(z-m+1)\vert\big)\Vert(\hat{P}_k-I)x(z)\Vert
\end{displaymath}
then, using Lemma \ref{l2d},
\begin{align*}
\Vert A_k(z)^{-1}P_ku - \hat{P}_kx(z)\Vert&\le c_2\Vert A_k(z)\hat{P}_kx(z) - P_ku\Vert\\
&\le c_2\big(1+\vert(z-m+1)\vert\big)\Vert(\hat{P}_k-I)x(z)\Vert
\end{align*}
where $c_2$ is independent of $n\ge N$ and $\vert\lambda+i-z\vert=r$. Furthermore,
\begin{align*}
\Vert A_k(z)^{-1}P_ku - x(z)\Vert_{\frak{a}}&\le\Vert A_k(z)^{-1}P_ku - \hat{P}_kx(z)\Vert_{\frak{a}} + \Vert(\hat{P}_k-I)x(z)\Vert_{\frak{a}}
\end{align*}
where
\begin{align*}
\Vert A_k(z)^{-1}&P_ku - \hat{P}_kx(z)\Vert_{\frak{a}}^2\\
&=(\frak{a}-m)[A_k(z)^{-1}P_ku - \hat{P}_kx(z)]
+ \Vert A_k(z)^{-1}P_ku - \hat{P}_kx(z)\Vert^2\\
&=\langle P_ku -A_k(z)\hat{P}_kx(z),A_k(z)^{-1}P_ku - \hat{P}_kx(z)\rangle\\
&\quad-\langle(iP_kP_{n}-z)(A_k(z)^{-1}P_ku - \hat{P}_kx(z)),A_k(z)^{-1}P_ku - \hat{P}_kx(z)\rangle\\
&\quad+ (1-m)\Vert A_k(z)^{-1}P_ku - \hat{P}_kx(z)\Vert^2\\
&\le\Vert P_ku -A_k(z)\hat{P}_kx(z)\Vert\Vert A_k(z)^{-1}P_ku - \hat{P}_kx(z)\Vert\\
&\quad+\Vert iP_kP_{n}-z\Vert\Vert A_k(z)^{-1}P_ku - \hat{P}_kx(z)\Vert^2\\
&\quad+ \vert 1-m\vert\Vert A_k(z)^{-1}P_ku - \hat{P}_kx(z)\Vert^2\\
&\le c_2\big(1+ \vert(z-m+1)\vert\big)^2\Vert(\hat{P}_k-I)x(z)\Vert^2\\
&\quad+ \big(1+\vert z\vert\big)c_2^2\big(1+\vert(z-m+1)\vert\big)^2\Vert(\hat{P}_k-I)x(z)\Vert^2\\
&\quad+\vert 1-m\vert c_2^2\big(1+\vert(z-m+1)\vert\big)^2\Vert(\hat{P}_k-I)x(z)\Vert^2.
\end{align*}
Therefore, 
\begin{align}
\Vert A_k(z)^{-1}P_ku - (A + iP_{n}-z)^{-1}u\Vert_{\frak{a}}&\le K_2\Vert(\hat{P}_k-I)x(z)\Vert_{\frak{a}}\nonumber\\\
&\le K_2\Vert x(z)\Vert_{\frak{a}}\delta_{\frak{a}}(\mathcal{M}_{n}(\{\lambda+i\}),\cL_k)\nonumber\\
&\le K_1K_2\delta_{\frak{a}}(\mathcal{M}_{n}(\{\lambda+i\}),\cL_k)\label{ifff}
\end{align}
for constant $K_2>0$ which is independent of $n\ge N$ and $\vert\lambda+i-z\vert=r$. Set
\begin{align*}
u_k&:=-\frac{1}{2i\pi}\int_{\vert\lambda+i-z\vert=r}A_k(\zeta)^{-1}P_ku~d\zeta,
\end{align*}
then $u_k\in\mathcal{M}_{n,k}(\{\lambda+i\})$ and
\begin{align*}
\Bigg\Vert\frac{u}{\Vert u\Vert_{\frak{a}}}&-\frac{u_k}{\Vert u\Vert_{\frak{a}}}\Bigg\Vert_{\frak{a}}=\\
&\frac{1}{2\pi\Vert u\Vert_{\frak{a}}}\Bigg\Vert\int_{\vert\lambda+i-z\vert=r}A_k(\zeta)^{-1}P_ku - (A + iP_n-\zeta)^{-1}u~d\zeta\Bigg\Vert_{\frak{a}}\\
&\le\frac{1}{2\pi\Vert u\Vert_{\frak{a}}}\int_{\vert\lambda+i-z\vert=r}\big\Vert A_k(\zeta)^{-1}P_ku - (A + iP_n-\zeta)^{-1}u\big\Vert_{\frak{a}}~\vert d\zeta\vert.
\end{align*}
Combining this estimate with \eqref{ifff}, we deduce that for some constant $K_3> 0$ which is independent of  $n\ge N$ and $\vert\lambda+i-z\vert=r$, we have
\[\delta_{\frak{a}}(\mathcal{M}_n(\{\lambda+i\}),\mathcal{M}_{n,k}(\{\lambda_j+i\}))\le K_3\delta_{\frak{a}}(\mathcal{M}_{n}(\{\lambda+i\}),\cL_k).
\]
Then, by virtue of \eqref{eqdims}, the following formula holds for all sufficiently large $k$,
\[
\delta_{\frak{a}}\big(\mathcal{M}_{n,k}(\{\lambda+i\}),\mathcal{M}_n(\{\lambda+i\})\big)\le\frac{\delta_{\frak{a}}\big(\mathcal{M}_n(\{\lambda+i\}),\mathcal{M}_{n,k}(\{\lambda+i\})\big)}{1-\delta_{\frak{a}}\big(\mathcal{M}_n(\{\lambda+i\}),\mathcal{M}_{n,k}(\{\lambda+i\})\big)}.
\]
The assertion \eqref{sscon1} is proved. Now using \eqref{sscon1}, \eqref{dbounds} and Theorem \ref{est}, we have
\begin{align*}
\delta_{\frak{a}}\big(\mathcal{M}_{n,k}(\{\lambda+i\}),\mathcal{L}(\{\lambda\})\big)&\le\delta_{\frak{a}}\big(\mathcal{M}_{n,k}(\{\lambda+i\}),\mathcal{M}_n(\{\lambda+i\})\big)\\
&\quad +\delta_{\frak{a}}\big(\mathcal{M}_{n}(\{\lambda+i\}),\mathcal{L}(\{\lambda\})\big)\\
&\le c_3\delta_{\frak{a}}\big(\mathcal{M}_{n}(\{\lambda+i\}),\mathcal{L}_k\big)+c_0\varepsilon_n\\
&\le c_0(c_3+1)\varepsilon_n + c_3\epsilon_k.
\end{align*}

\end{proof}

Let $\mu_{n,k,1},\dots,\mu_{n,k,\kappa}$ be the repeated eigenvalues of $A_k + iP_kP_n$ which are associated to the subspace $\mathcal{M}_{n,k}(\{\lambda+i\})$.

\begin{theorem}\label{eigconv}
There exists a constant $c_5>0$, independent of $n\ge N$, such that
\[\max_{1\le j\le \kappa}\vert\mu_{n,k,j}-\lambda-i\vert \le c_5(\varepsilon_n+\epsilon_k)^2
\]
for all sufficiently large $k$.
\end{theorem}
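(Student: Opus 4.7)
The plan is to imitate the proof of Theorem \ref{QQ}, replacing the $\mathcal{H}_\frak{a}$-orthogonal projection $Q_n$ onto $\mathcal{M}_n(\{\lambda+i\})$ by the analogous projection $Q_{n,k}$ from $\mathcal{H}_\frak{a}$ onto $\mathcal{M}_{n,k}(\{\lambda+i\})$. Let $u_1,\dots,u_\kappa$ be an orthonormal basis of $\cL(\{\lambda\})$ and set $u_{n,k,j}:=Q_{n,k}u_j\in\mathcal{M}_{n,k}(\{\lambda+i\})\subset\cL_k$. Theorem \ref{limlem1} gives
\[
\|u_j-u_{n,k,j}\|_\frak{a}=\mathrm{dist}_\frak{a}\big(u_j,\mathcal{M}_{n,k}(\{\lambda+i\})\big)\le c_4(\varepsilon_n+\epsilon_k),
\]
with $c_4$ independent of $n\ge N$, and for all sufficiently large $k$ the map $Q_{n,k}:\cL(\{\lambda\})\to\mathcal{M}_{n,k}(\{\lambda+i\})$ is a bijection (by \eqref{eqdims} and closeness to orthonormality).

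Next I introduce the $\kappa\times\kappa$ matrices
\[
[L_{n,k}]_{p,q}=\langle(A_k+iP_kP_n)u_{n,k,q},u_{n,k,p}\rangle=\frak{a}(u_{n,k,q},u_{n,k,p})+i\langle P_nu_{n,k,q},u_{n,k,p}\rangle
\]
(using $P_ku_{n,k,p}=u_{n,k,p}$ and self-adjointness of $P_k$) and $[M_{n,k}]_{p,q}=\langle u_{n,k,q},u_{n,k,p}\rangle$. Since the $u_{n,k,j}$ span $\mathcal{M}_{n,k}(\{\lambda+i\})$, which is invariant under $A_k+iP_kP_n$ with eigenvalues $\mu_{n,k,1},\dots,\mu_{n,k,\kappa}$, we have $\sigma(M_{n,k}^{-1}L_{n,k})=\{\mu_{n,k,1},\dots,\mu_{n,k,\kappa}\}$.

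I then expand each entry via $u_{n,k,j}=u_j+(Q_{n,k}-I)u_j$. For the form part, exactly as in Theorem \ref{QQ}, the cross-terms satisfy
\[
|(\lambda-m+1)\langle u_q,(Q_{n,k}-I)u_p\rangle|=|\langle(Q_{n,k}-I)u_q,(Q_{n,k}-I)u_p\rangle_\frak{a}|\le\|(Q_{n,k}-I)u_q\|_\frak{a}\|(Q_{n,k}-I)u_p\|_\frak{a},
\]
using $\mathcal{H}_\frak{a}$-orthogonality of $(Q_{n,k}-I)u_p$ to $Q_{n,k}u_q$, which yields $\frak{a}(u_{n,k,q},u_{n,k,p})=\lambda\delta_{pq}+\mathcal{O}((\varepsilon_n+\epsilon_k)^2)$. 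The Gram matrix similarly gives $[M_{n,k}]_{p,q}=\delta_{pq}+\mathcal{O}((\varepsilon_n+\epsilon_k)^2)$. The $P_n$ piece requires a little more care because $P_n$ does not have a good relation to the $\mathcal{H}_\frak{a}$ inner product; expanding as in Theorem \ref{QQ}, one finds seven terms, and each is $\mathcal{O}((\varepsilon_n+\epsilon_k)^2)$: the pure $\frak{a}$-orthogonality terms are controlled as above; cross-terms of the form $\langle(P_n-I)u_q,(Q_{n,k}-I)u_p\rangle$ are bounded by $\|(I-P_n)u_q\|\|(Q_{n,k}-I)u_p\|_\frak{a}\le\varepsilon_n(\varepsilon_n+\epsilon_k)$, using $u_q\in\mathcal{L}(\Delta)$; and $\langle(I-P_n)u_q,u_p\rangle=\langle(I-P_n)u_q,(I-P_n)u_p\rangle\le\varepsilon_n^2$ by self-adjointness and idempotence of $I-P_n$.

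Combining gives $[L_{n,k}]_{p,q}=(\lambda+i)\delta_{pq}+\mathcal{O}((\varepsilon_n+\epsilon_k)^2)$ and $[M_{n,k}]^{-1}_{p,q}=\delta_{pq}+\mathcal{O}((\varepsilon_n+\epsilon_k)^2)$, hence $[L_{n,k}M_{n,k}^{-1}]_{p,q}=(\lambda+i)\delta_{pq}+\mathcal{O}((\varepsilon_n+\epsilon_k)^2)$, and Gershgorin's theorem yields the claim. The main obstacle to watch is that every constant appearing in the $\mathcal{O}$ terms must be independent of $n\ge N$: this comes from the fact that $c_0$ in Theorem \ref{est} and $c_4$ in Theorem \ref{limlem1} are already uniform in $n\ge N$, together with the uniform bound $\|P_n\|=1$ and the fixed norms $\|u_j\|_\frak{a}$, so the resulting constant $c_5$ is uniform as required.
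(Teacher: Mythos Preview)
Your proof is correct and follows essentially the same approach as the paper's: the paper uses the $\mathcal{H}_\frak{a}$-orthogonal projection $R_k$ onto $\mathcal{M}_{n,k}(\{\lambda+i\})$ (your $Q_{n,k}$), forms the identical matrices $L_{n,k}$ and $M_{n,k}$, performs the same seven-term expansion of $\langle P_n u_{q,k},u_{p,k}\rangle$ with the same $\frak{a}$-orthogonality and $(I-P_n)^2=I-P_n$ tricks, and concludes via Gershgorin. The only cosmetic slip is that $\mathrm{dist}_\frak{a}(u_j,\mathcal{M}_{n,k})\le \|u_j\|_\frak{a}\, c_4(\varepsilon_n+\epsilon_k)$ rather than $c_4(\varepsilon_n+\epsilon_k)$, but since $\|u_j\|_\frak{a}=\sqrt{\lambda-m+1}$ is a fixed constant this does not affect the argument.
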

\begin{proof}
Let $u_1,\dots,u_\kappa$ be an orthonormal basis for $\cL(\{\lambda\})$. Let $R_k$ be the orthogonal projection from $\mathcal{H}_{\frak{a}}$ onto $\mathcal{M}_{n,k}(\{\lambda+i\})$ and set $u_{j,k}=R_ku_j$. Using Theorem \ref{limlem1},
\begin{align*}
\Vert u_j-u_{j,k}\Vert_{\frak{a}}&=\Vert(I-R_k)u_j\Vert_{\frak{a}}\\
&=\dist_{\frak{a}}\big(u_j,\mathcal{M}_{n,k}(\{\lambda+i\})\big)\\
&\le \Vert u_j\Vert_{\frak{a}}\hat{\delta}_{\frak{a}}\big(\cL(\{\lambda\}),\mathcal{M}_{n,k}(\{\lambda+i\})\big)\\
&\le\Vert u_j\Vert_{\frak{a}}c_4(\varepsilon_n+\epsilon_k)\\
&\le K_4(\varepsilon_n+\epsilon_k)
\end{align*}
for constant $K_4>0$ which is independent of $n\ge N$. Consider the matrices
\[
[L_{n,k}]_{p,q}=\frak{a}(u_{q,k},u_{p,k}) + i\langle P_{n}u_{q,k},u_{p,k}\rangle\quad\textrm{and}\quad
[M_{n,k}]_{p,q}=\langle u_{q,k},u_{p,k} \rangle.
\]
Evidently, $\sigma(L_{n,k}M_{n,k}^{-1})$ is precisely the set $\{\mu_{n,k,1},\dots,\mu_{n,k,\kappa}\}$. 
\void{Let $Q_n$ be the orthogonal projection from $\mathcal{H}_{\frak{a}}$ onto $\mathcal{M}_{n}(\{\lambda_j+i\})$ and consider the matrix
\[
[M_n]_{p,q}=\langle Q_nu_{q},Q_nu_{p} \rangle
\]
From the proof of Theorem \ref{QQ}, we have $[M_n]_{p.q} = \delta_{p,q} + \mathcal{O}(\tau_n^2)$. Hence, there exists a sequence $(\delta_n)\subset\mathbb{R}$ with $\delta_n\to0$ and 
\[
\Vert N_{n,k}^{-1} - I_\kappa\Vert \le \delta_n\quad\textrm{for all sufficiently large }k.
\]
}
We have
\begin{align*}
\frak{a}(u_{q,k},u_{p,k}) &= \frak{a}((R_k-I)u_q,u_p) + \frak{a}((R_k-I)u_q,(R_k-I)u_p)\\
&\quad+\frak{a}(u_q,(R_k-I)u_p)+\frak{a}(u_q,u_p)\\
&=\lambda\langle(R_k-I)u_q,u_p\rangle + \frak{a}((R_k-I)u_q,(R_k-I)u_p)\\
&\quad+\lambda\langle u_q,(R_k-I)u_p\rangle+\lambda\delta_{qp}
\end{align*}
and
\begin{align*}
\vert(\lambda-m+1)\langle u_q,(R_k-I)u_p\rangle\vert&=\vert\langle u_q,(R_k-I)u_p\rangle_{\frak{a}}\vert\\
&=\vert\langle(R_k-I)u_q,(R_k-I)u_p\rangle_{\frak{a}}\vert\\
&\le\Vert(R_k-I)u_q\Vert_{\frak{a}}\Vert(R_k-I)u_p\Vert_{\frak{a}},
\end{align*}
hence
\[
\vert\frak{a}(u_{q,k},u_{p,k}) - \lambda\delta_{qp}\vert \le
K_5(\varepsilon_n+\epsilon_k)^2
\]
for constant $K_5> 0$ which is independent of $n\ge N$.
Similarly,
\begin{align*}
\langle P_{n}u_{q,k},u_{p,k}\rangle&=\langle P_{n}(R_k-I)u_q,(R_k-I)u_p\rangle + \langle (R_k-I)u_q,(P_{n}-I)u_p\rangle\\
&\quad+\langle (R_k-I)u_q,u_p\rangle+\langle (P_{n}-I)u_q,(R_k-I)u_p\rangle\\
&\quad+\langle u_q,(R_k-I)u_p\rangle
+\langle(P_{n}-I)u_q,u_p\rangle+\langle u_q,u_p\rangle,
\end{align*}
hence
\[
\vert i\langle P_{n}u_{q,k},u_{p,k}\rangle - i\delta_{qp}\vert \le K_6(\varepsilon_n+\epsilon_k)^2,
\]
for constant $K_6> 0$ which is independent of $n\ge N$. Furthermore,
\begin{align*}
[M_{n,k}]_{p,q}&=\langle(R_k-I)u_{q},(R_k-I)u_{p} \rangle + \langle(R_k-I)u_{q},u_{p} \rangle + \langle u_{q},(R_k-I)u_{p} \rangle\\
&\quad + \delta_{pq},
\end{align*}
hence for constants $K_7,K_8> 0$ both independent of $n\ge N$, we have
\[
\vert [M_{n,k}]_{pq} - \delta_{pq}\vert \le K_7(\varepsilon_n + \epsilon_k)^2\quad\Rightarrow\quad\vert [M_{n,k}]^{-1}_{pq} - \delta_{pq}\vert \le K_8(\varepsilon_n + \epsilon_k)^2.
\]
Then 
\[
\vert[L_{n,k}M_{n,k}^{-1}]_{p,q}-(\lambda+i)\delta_{p,q}\vert \le K_{9}(\varepsilon_n+\epsilon_k)^2
\]
for constant $K_{9}> 0$ which is independent of $n\ge N$. The result follows from the Gershgorin circle theorem.
\end{proof}

\begin{remark}
We note that, for any orthogonal projection $P$, all non-real eigenvalues of  $A+iP$ can provide information about $\sigma(A)$. Indeed, whenever $(A+iP - z)u=0$ with $u\ne0$, we have
\begin{equation}\label{enclosure1}
(A- \Re z)u = i\Im zu - iPu,\quad\langle Au,u\rangle = \Re z\Vert u\Vert^2,\quad\Vert Pu\Vert^2=\Im z\Vert u\Vert^2
\end{equation}
and using the first and third terms from \eqref{enclosure1} yields
\begin{equation}\label{enclosure2}
\Vert(A-\Re z)u\Vert^2 = (\Im z)^2\Vert u\Vert^2 + (1-2\Im z)\Vert Pu\Vert^2= \Im z(1-\Im z)\Vert u\Vert^2,
\end{equation}
then
\[
\Big[\Re z-\sqrt{\Im z(1-\Im z)},\Re z+\sqrt{\Im z(1-\Im z)}\Big]\cap\sigma(A)\ne\varnothing.
\]
Further, suppose that $(a',b')\cap\sigma(A)=\lambda$ and $a'<\Re z<b'$. 
Then, using \cite[Lemma 1 \& 2]{kat} with the second term in \eqref{enclosure1} and the equality \eqref{enclosure2}, we obtain the enclosure
\[
\lambda\in\left(\Re z - \frac{\Im z(1-\Im z)}{b'-\Re z},\Re z + \frac{\Im z(1-\Im z)}{\Re z - a'}\right).
\]

\end{remark}

Let us now verify our main results with a illustrative example.

\begin{example}\label{uber2}With $\mathcal{H}=\big[L^2\big((0,1)\big)\big]^2$ we consider the following block-operator matrix
\begin{displaymath}
A_0=\small{\left(
\begin{array}{cc}
-\frac{d^2}{dx^2} & -\frac{d}{dx}\\
~&~\\
\frac{d}{dx} & 2I
\end{array} \right),\textrm{~~}\Dom(A_0)=H^2\big((0,1)\big)\cap H^1_0\big((0,1)\big)\times H^1\big((0,1)\big).}
\end{displaymath}
$A_0$ is essentially self-adjoint with closure $A$. We have $\sigma_{\ess}(A)=\{1\}$ (see for example \cite[Example 2.4.11]{Tretter}) while
$\sigma_{\dis}(A)$ consists of the simple eigenvalue $2$ with eigenvector $(0,1)^T$, and the two sequences of simple eigenvalues
\[
\lambda_k^\pm := \frac{2+k^2\pi^2 \pm\sqrt{(k^2\pi^2 + 2)^2  - 4k^2\pi^2}}{2}.
\]
The sequence $\lambda_k^-$ lies below, and accumulates at, the essential spectrum. The sequence $\lambda_k^+$ lies above the eigenvalue $2$ and accumulates at $\infty$.

Let $\cL_h^0$ be the FEM space of piecewise linear functions on $[0,1]$ with a uniform mesh of size $h$ and which satisfy homogeneous Dirichlet boundary conditions. Let $\cL_h$ be the space without boundary conditions.  First we apply the Galerkin method directly to $A$ with trial spaces $L_{h}=\cL_h^0\oplus\cL_h$. We find that spectral pollution occurs in the interval $(1,2)\subset\rho(A)$ which obscures the approximation of the genuine eigenvalue $2$; see the left-hand side of Figure 6. 
\begin{figure}[h]
\centering
\includegraphics[scale=.3]{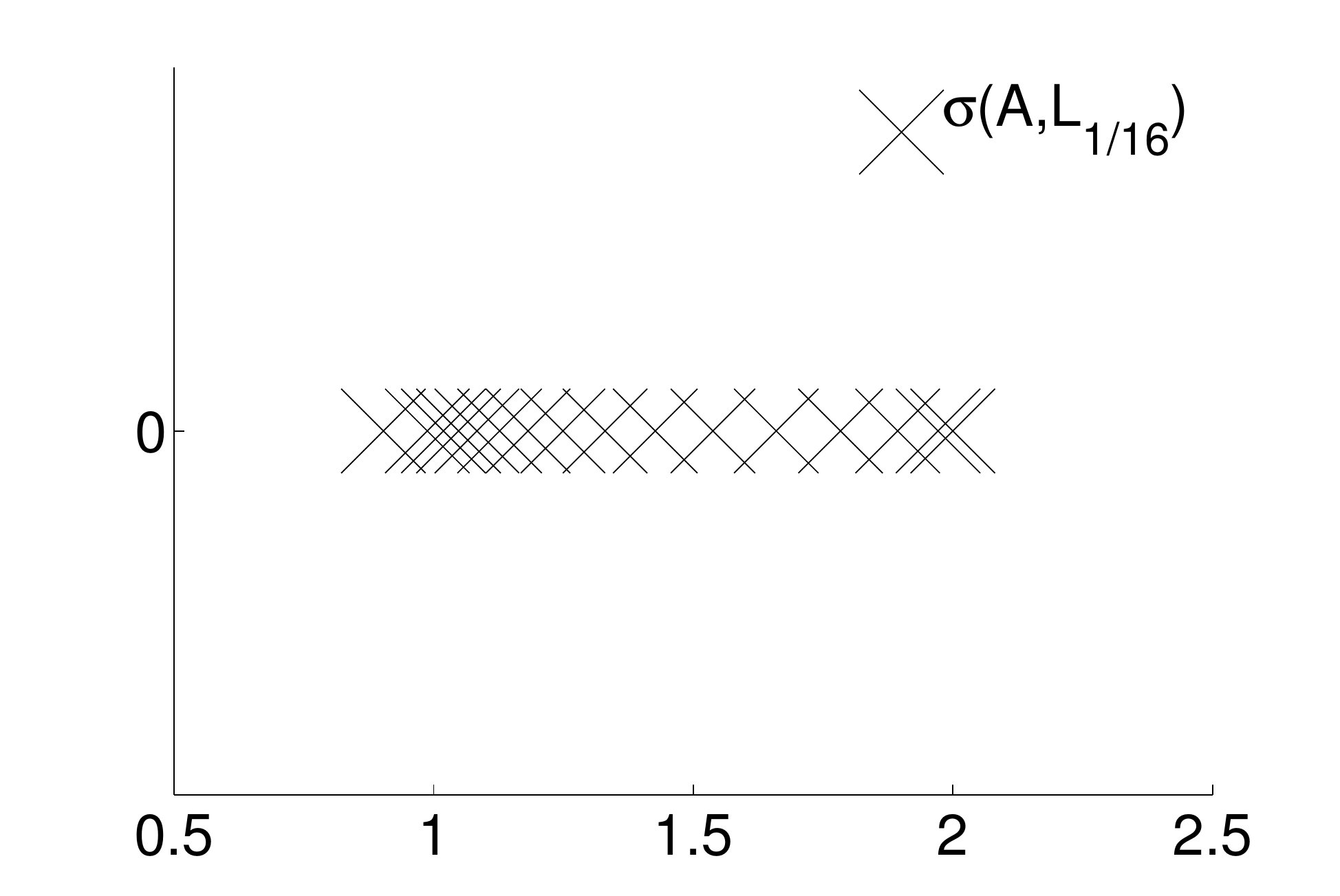}\includegraphics[scale=.3]{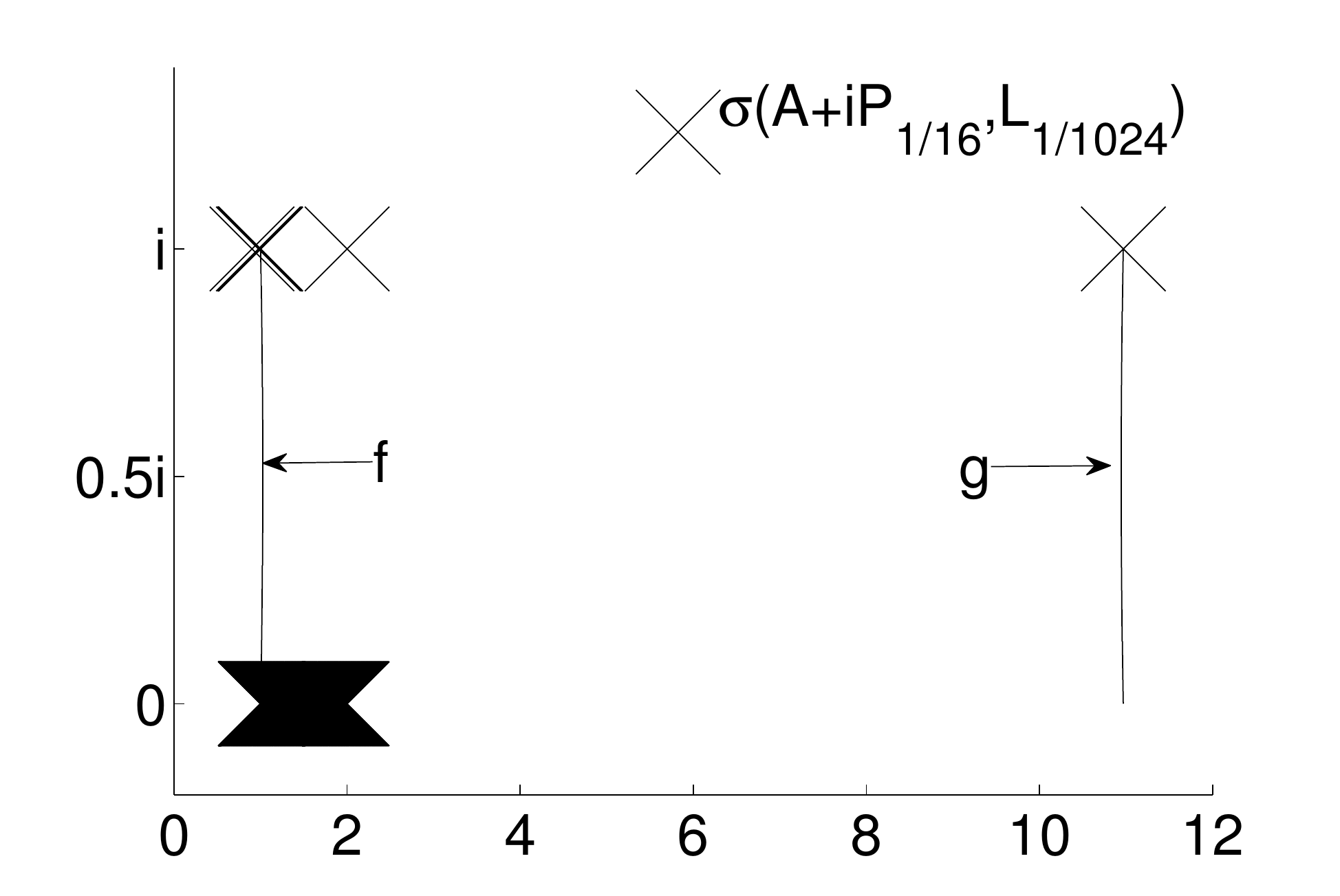}
\caption{On the left-hand side, Galerkin method approximation for $\sigma(A)$ from Example \ref{uber2}, spectral pollution in the interval $(1,2)$ obscures the approximation of the genuine eigenvalue $2$. On the right-hand side, the Galerkin method approximation for $\sigma(A+iP_{1/16})$ from Example \ref{uber2}, the curves $f$ and $g$, which together form $\Gamma_{1,\lambda_1^+}^{0,1}$. The region $\mathcal{U}_{1,\lambda_1^+}^{0,1}$ consists of complex numbers which lie to the right of $f$ and to the left of $g$.}
\end{figure}
Let $P_{1/16}$ be the orthogonal projection onto the trial space $L_{1/16}$. Since
\[(1,\lambda_1^+)\cap\sigma(A)=\{2\}\in\sigma_{\dis}(A)
\]
and $(0,1)^T\in L_h$ for all $h\in(0,1]$, the hypothesis of Lemma \ref{cor1} is satisfied, hence
\[
\sigma(A+iP_{1/16})\cap\mathcal{U}_{1,\lambda_1^+}^{0,1}= \{2+i\}\in\sigma(A+iP_{1/16}).
\]
Furthermore, by \cite[Theorem 2.5]{me}, we can approximate the eigenvalue $\{2+i\}$, with the Galerkin method, without incurring any spectral pollution, i.e.,
\[
\bigg(\lim_{h\to 0}\sigma(A+iP_{1/16},L_h)\bigg)\cap\bigg(\mathcal{U}_{1,\lambda_1^+}^{0,1}\big\backslash\mathbb{R}\bigg)=\{2+i\}.
\]
The right-hand side of Figure 6 shows the Galerkin method approximation of $\sigma(A+iP_{1/16})$ with the trial space $L_{1/1024}$. We see that $2+i\in\sigma(A+iP_{1/16},L_{1/1024})$ and the only elements from
\[
\bigg(\sigma(A+iP_{1/16},L_{1/2048})\cap\mathcal{U}_{1,\lambda_1^+}^{0,1}\bigg)\big\backslash\{2+i\}
\]
are very close to the real line which is where spectral pollution is still permitted. The perturbation method has demonstrated that the Galerkin eigenvalues in the interval $(1,2)$ are all spurious. Furthermore, the genuine eigenvalue 2 is approximated by the perturbation method without being obscured by pollution.
\void{
\begin{figure}[h]
\centering
\includegraphics[scale=.5]{nov2014pic12-eps-converted-to.pdf}
\caption{The Galerkin eigenvalues for the operator $A+iP_{1/16}$ and trial space $L_{1/2048}$. Also shown are the curves $f$ and $g$, which together form $\Gamma_{1,\lambda_1^+}^{0,1}$. The region $\mathcal{U}_{1,\lambda_1^+}^{0,1}$ consists of those complex numbers which lie to the right of $f$ and to the left of $g$.}
\end{figure}
\begin{figure}[h]
\centering
\includegraphics[scale=.5]{nov2014pic14.eps}
\caption{The Galerkin eigenvalues near $1+i$ all lie to the left of the curve $f$ and are, therefore, not contained in $\mathcal{U}_{1,\lambda_1^+}^{0,1}$.}
\end{figure}
}

Next we approximate the eigenvalue $\lambda_1^+$. Applying the Galerkin method directly to $A$ we do not incur spectral pollution near $\lambda_1^+$ and consequently we have the standard superconvergence result:
\begin{equation}\label{spcon}
\dist\big(\lambda_1^+,\sigma(A,L_h)\big)=\mathcal{O}(\delta_{\frak{a}}(\mathcal{L}(\{\lambda_1^+\}),\mathcal{L}_h)^2)=\mathcal{O}(h^2).
\end{equation}
By Theorem \ref{QQ} we have
\begin{equation}\label{spspcon}
\dist\big(\lambda_1^++i,\sigma(A+iP_h)\big)=\mathcal{O}(\delta(\mathcal{L}(\{\lambda_1^+\}),\mathcal{L}_h)^2)=\mathcal{O}(h^4).
\end{equation}
The second column in Table 1 shows the distance of $\lambda_1^+$ to $\sigma(A,L_h)$, the third column shows the distance of $\lambda^+_1+i$ to a Galerkin approximation (with very refined mesh) of the eigenvalue of $A+iP_h$ which is close to $\lambda_1^++i$. The left-hand side of Figure 7 displays a loglog plot of the data in Table 1, and verifies both \eqref{spcon} and \eqref{spspcon}.
\begin{table}[h!]
\begin{tabular}{c|c|c}
h & $\dist\big(\lambda_1^+,\sigma(A,L_h)\big)$ & $\dist\big(\lambda_1^++i,\sigma(A+iP_h,L_{h\times 2^{-7}})\big)$\\
\hline
1/2 &   \hspace{30pt}1.861045647858232\hspace{30pt}  &  0.014440864705963\\
1/4 &  0.458746253205135  &  0.000609676693732\\
1/8 &  0.113442149493080  &  0.000034835584324\\
1/16 & 0.028273751580725 &  \hspace{3pt}0.000002688221958
\vspace{10pt}
\end{tabular}\caption{Approximation of $\lambda_1^+$ from $\sigma(A,L_h)$ and from an approximation of $\sigma(A+ iP_h)$.}
\end{table}
\begin{figure}\label{puper}
\centering
\includegraphics[scale=.3]{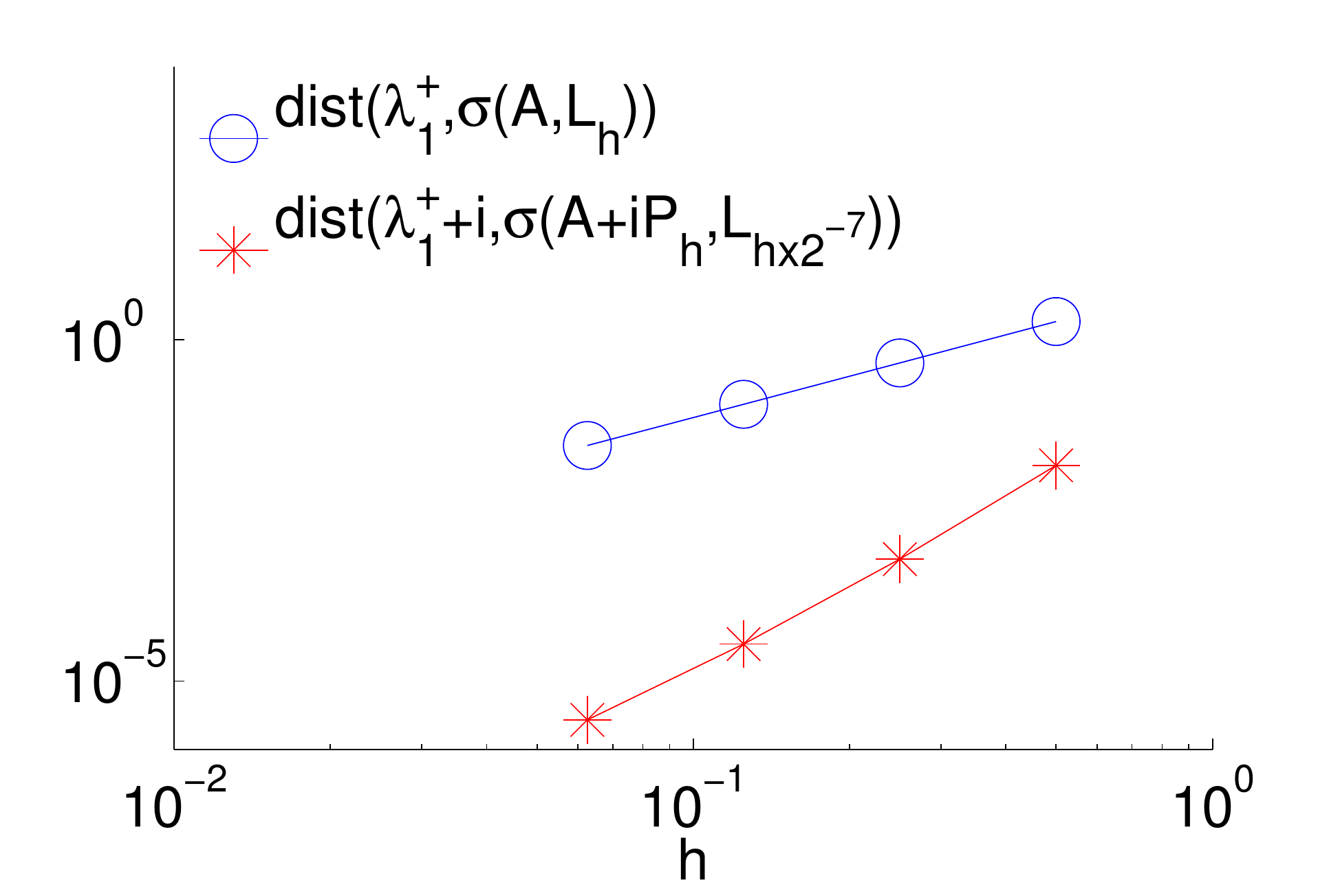}\includegraphics[scale=.3]{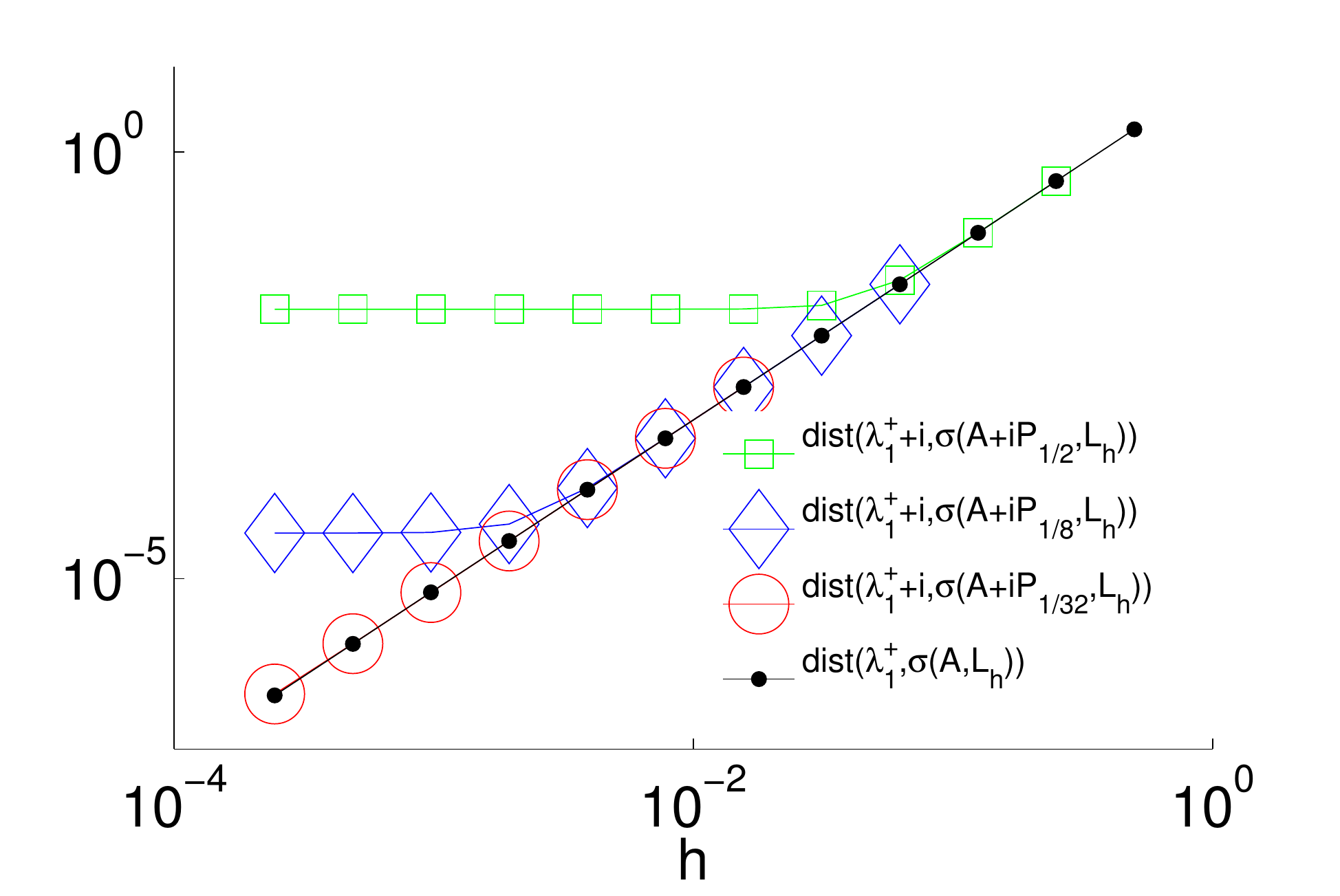}
\caption{On the left-hand side, approximation of $\lambda_1^+$ with $\sigma(A,L_h)$ and with an approximation of $\sigma(A+ iP_h)$. The gradients blue and red lines are approximately $2$ and $4$, respectively. On the right-hand side, approximation of $\lambda_1^++i$ and $\lambda_1^+$ using the perturbation and Galerkin methods, respectively}
\end{figure}

We now compare the approximation of $\lambda_1^+$ by applying the Galerkin method directly to $A$ and to $A+iP_h$. The results are displayed on the right-hand side of  Figure 7; we see that the approximation and convergence achieved by the perturbation method are essentially the same as those achieved by the Galerkin method. It is clear and consistent with Theorem \ref{eigconv} that we need not be concerned with locking-in poor accuracy with a relatively low dimensional projection $P_{h}$. In fact, it is quite remarkable that the approximation with $\sigma(A+iP_{1/32},L_{1/32\times 2^7})$ is essentially the same as $\sigma(A,L_{1/32\times 2^7})$.
\void{
\begin{figure}[h]
\centering
\includegraphics[scale=.5]{nov2014pic16.eps}\caption{Approximation of $\lambda_1^++i$ and $\lambda_1^+$ using the perturbation and Galerkin methods, respectively.}
\end{figure}
}

\void{With $\mathcal{H}=[L^2((0,1),dx]^2$ we consider the following block-operator matrix
\begin{displaymath}
T_0=\small{\left(
\begin{array}{cc}
-d^2/dx^2 & -d/dx\\
d/dx & 2I
\end{array} \right)}
\end{displaymath}
with homogeneous Dirichlet boundary conditions in the first component. We have $\sigma_{\ess}(T)=\{1\}$ (see for example \cite[Example 2.4.11]{Tretter}) while
$\sigma_{\dis}(T)$ consists of the simple eigenvalue $\{2\}$ with eigenvector $(0,1)^T$, and the two sequences of simple eigenvalues
\[
\lambda_k^\pm := \frac{2+k^2\pi^2 \pm\sqrt{(k^2\pi^2 + 2)^2  - 4k^2\pi^2}}{2}.
\]
The sequence $\lambda_k^-$ lies below, and accumulates at, the essential spectrum. While the sequence $\lambda_k^+$ lies above the eigenvalue $2$ and accumulates at $\infty$.

Denote by $L_h^0$ the FEM space of piecewise linear functions on a uniform mesh of size $h$ and satisfying homogeneous Dirichlet boundary conditions, and by $L_h$ the space without boundary conditions. We consider the subspaces $\cL_{h}=L_h^0\oplus L_h$ which belong to the domain of the quadratic form associated to $T$.

\begin{figure}[h!]
\centering
\includegraphics[scale=.4]{twenty.eps}\includegraphics[scale=.4]{twenty2.eps}
\caption{displaying something.}
\end{figure}

\begin{figure}[h!]
\centering
\includegraphics[scale=.4]{twenty3.eps}\includegraphics[scale=.4]{twenty4.eps}
\caption{displaying something.}
\end{figure}

\begin{table}[hhh]
\begin{tabular}{c|c|c}
h & $\lambda_{1,h}^+$ & $\vert\lambda_{1,h}^+-\lambda_1^+-i\vert$\\
\hline
1/5 &   11.0201854242 + 0.99919794297i & 0.05028557407 \\
1/10 &  10.9798272969 + 0.99997146825i & 0.00992109091 \\
1/20 &  10.9721318334 + 0.99999858898i & 0.00222558684 \\
1/40 &  10.9704344517 + 0.99999992082i & 0.00052820471 \\
1/80 &  10.9700349748 + 0.99999999531i & 0.00012872783 \\
1/160 & 10.9699380255 + 0.99999999988i & 0.00003177855 \\
1/320 & 10.9699141428 + 0.99999999838i & 0.00000789582
\end{tabular}
\end{table}
}
\end{example}

\section{Further examples}

\void{\subsection{Perturbed multiplication Operator}
With $\mathcal{H}=L^2(-\pi,\pi)$ consider the bounded operator $T\phi = a(x)\phi$ where
\[
a(x)=\begin{cases}
  -\frac{3}{2} + \frac{1}{2}\cos\sqrt{5}x & \text{for}\quad \pi\le x < 0, \\
  2 + \cos\sqrt{2}x & \text{for}\quad 0\le x<\pi.
  \end{cases}
\]
We also consider the operator $T+K$ where $K$ is the rank $2$ operator
\[
K\psi(x)=
  -\frac{3}{2\pi}e^{-ix}\int_{-\pi}^\pi\psi(y)e^{iy}~dy + \frac{1}{2\pi}e^{2ix}\int_{-\pi}^\pi\psi(y)e^{-2iy}~dy.
\]
We have
\[\sigma_{\ess}(T+K)=\sigma_{\ess}(T)=\sigma(T)=[-2,-1]\cup[1,3].\]
Note that $(-1,1)$ is a gap in the spectrum of $T$, and a gap in the essential spectrum of $T+K$. The latter can have at most $2$ eigenvalues in this gap. With the orthonormal basis $\psi_k(x) = e^{-ikx}$ $k\in\mathbb{Z}$, we apply the Galerkin method to $T$ and $T+K$ with the subspaces $\cL_{2n+1}=\spn(\psi_{-n}(x),\dots,-\psi_0(x),\dots,\psi_n(x))$ $n\in\mathbb{N}$. The operators $T$ and $T+K$ are considered in \cite[Section 3]{lesh}, using the method of \emph{second order relative spectra} two eigenvalues of $T+K$ were approximated $\lambda_1\approx -0.8947$ and $\lambda_2\approx 0.6082$ both in the gap $(-1,1)$. We shall approximate these eigenvalues with our perturbation method.

The left hand side of Figure \ref{f1} show the Galerkin eigenvalues for $T$ and $T+K$. In both cases the essential spectrum is well approximated, but in both cases there are several \emph{spurious} Galerkin eigenvalues in the interval $(-1,1)$. On the right we have the perturbation method applied to $T$ where $Q_n=E_n((-1,1))P_n$. Since $(-1,1)\subset\rho(T)$ we should not have any elements from $\sigma(T,\mathcal{L}_{2n+1})$ with imaginary part close to 1 unless those points have real part very close the circles

\begin{figure}[h!]\label{f1}
\centering
\includegraphics[scale=.4]{five-eps-converted-to.pdf}\includegraphics[scale=.4]{six-eps-converted-to.pdf}
\caption{On the left we have the Galerkin eigenvalues for $T$ and $T+K$. On the right we have the perturbation method applied to $T$ where $Q_n=E_n((-1,1))P_n$.}
\end{figure}

\begin{figure}[h!]
\centering
\includegraphics[scale=.4]{seven.eps}\includegraphics[scale=.4]{two.eps}
\caption{On the left we have the perturbation method applied to $T$ where $Q_n=E_n((-1,1))P_n$ also shown are the circles $\Gamma_{-1}$ and $\Gamma_1$. On the right we have the perturbation method applied to $T+K$ where $Q_n=E_n((-1,1))P_n$.}
\end{figure}

\begin{figure}[h!]
\centering
\includegraphics[scale=.4]{three.eps}\includegraphics[scale=.4]{eight.eps}
\caption{displaying something.}
\end{figure}
}

\begin{example}\label{magneto}
With $\mathcal{H}=\big[L^2((0,1),\rho_0dx\big)\big]^3$ we consider the magnetohydrodynamics operator
\begin{displaymath}
A=\small{\left(\begin{array}{ccc}
        -\frac{d}{dx}(\upsilon_a^2 + \upsilon_s^2)\frac{d}{dx} + k^2\upsilon_a^2 & -i(\frac{d}{dx}(\upsilon_a^2 + \upsilon_s^2) -1)k_\perp & -i(\frac{d}{dx}\upsilon_s^2 -1)k_\parallel\\
        ~&~&~\\
	-ik_\perp((\upsilon_a^2 + \upsilon_s^2)\frac{d}{dx} +1) & k^2\upsilon_a^2 + k_\perp^2\upsilon_s^2 & k_\perp k_\parallel\upsilon_s^2\\
	~&~&~\\
	-ik_\parallel(\upsilon_s^2\frac{d}{dx} +1) & k_\perp k_\parallel\upsilon_s^2 & k_\parallel^2\upsilon_s^2
      \end{array}\right)}.
\end{displaymath}
With $\rho_0=k_\perp=k_\parallel=g=1$, $\upsilon_{a}(x)=\sqrt{7/8 - x/2}$ and $\upsilon_s(x)=\sqrt{1/8+x/2}$, we have 
\[\sigma_{\ess}(A)=[7/64,1/4]\cup[3/8,7/8].
\]
The discrete spectrum contains a sequence of simple eigenvalues which accumilate only at $\infty$. These eigenvalues are above, and not close to,  the essential spectrum. They are approximated by the Galerkin method, with trial spaces $L_h=\cL_{h}^0\oplus\cL_h\oplus\cL_h$, without incurring spectral pollution. It was shown, using the second order relative spectrum, that there is also an eigenvalue $\lambda_1\approx 0.279$ in the gap in the essential spectrum; see \cite[Example 2.7]{me2}. The top row of Figure 8 shows many Galerkin eigenvalues in the gap in the essential spectrum and many more just above the essential spectrum; we should be suspicious of spectral pollution in these regions. We define
\[
\tau(A+iP_{h_0},L_h):=\big\{\Re z + (1-\Im z)i:z\in\sigma(A+iP_{h_0},L_h)\big\}
\]
and we are therefore interested in those elements from $\tau(A+iP_{h_0},L_h)$ which are close to the real line, i.e., we would prefer our approximate eigenvalues to converge to $\sigma(A)$ rather than $\sigma(A)+i$. The second row of Figure 8 shows $\tau(A+iP_{1/64},L_{1/1024})$, the two bands of essential spectrum are clearly approximated along with an approximation of $\lambda_1$ in the gap, and a second eigenvalue above the essential spectrum. The perturbation method has approximated the essential spectrum, identified the spectral pollution, and approximated two eigenvalues which were obscured by the spectral pollution. 
\begin{figure}[h!]
\centering
\includegraphics[scale=.3]{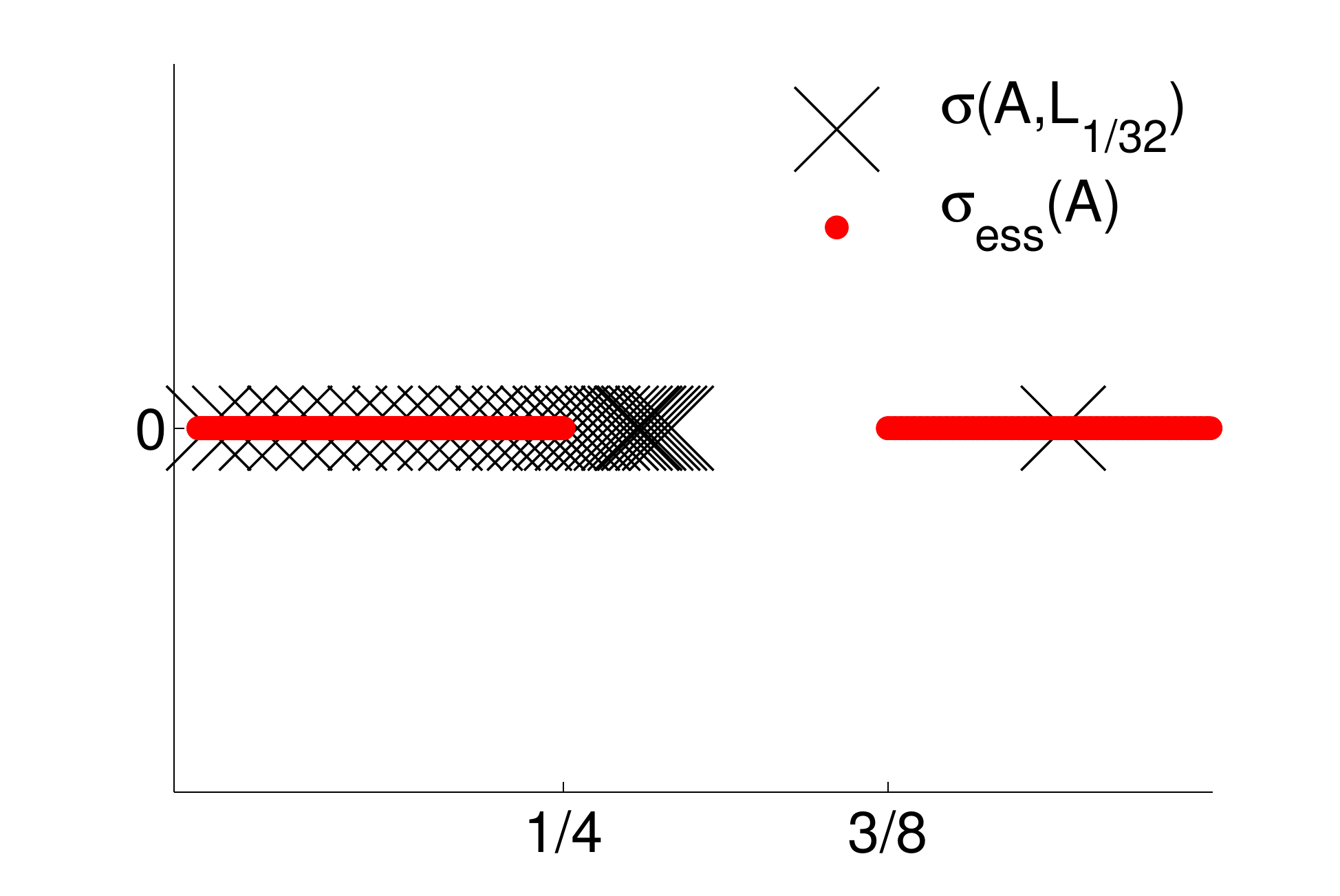}\includegraphics[scale=.3]{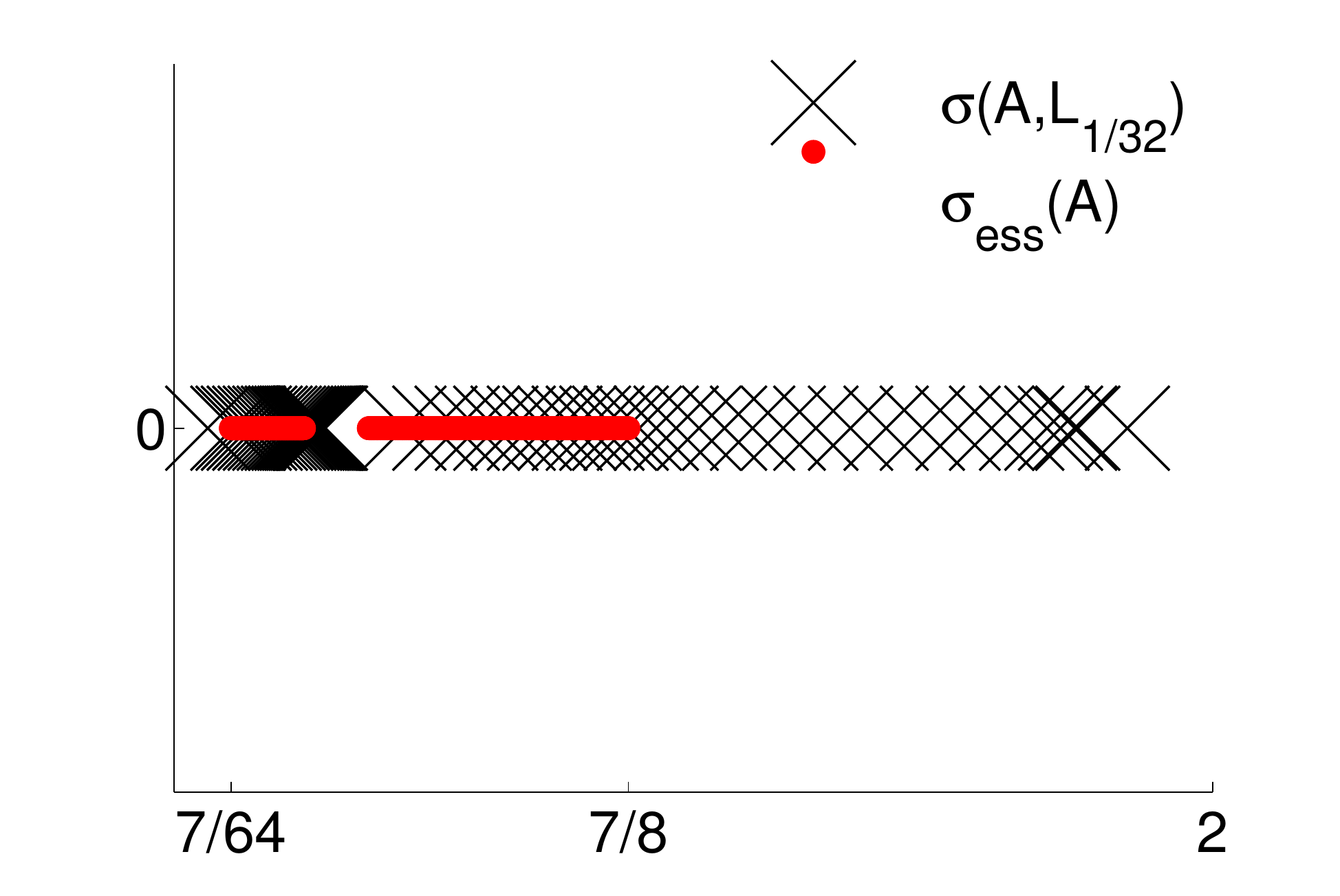}
\includegraphics[scale=.3]{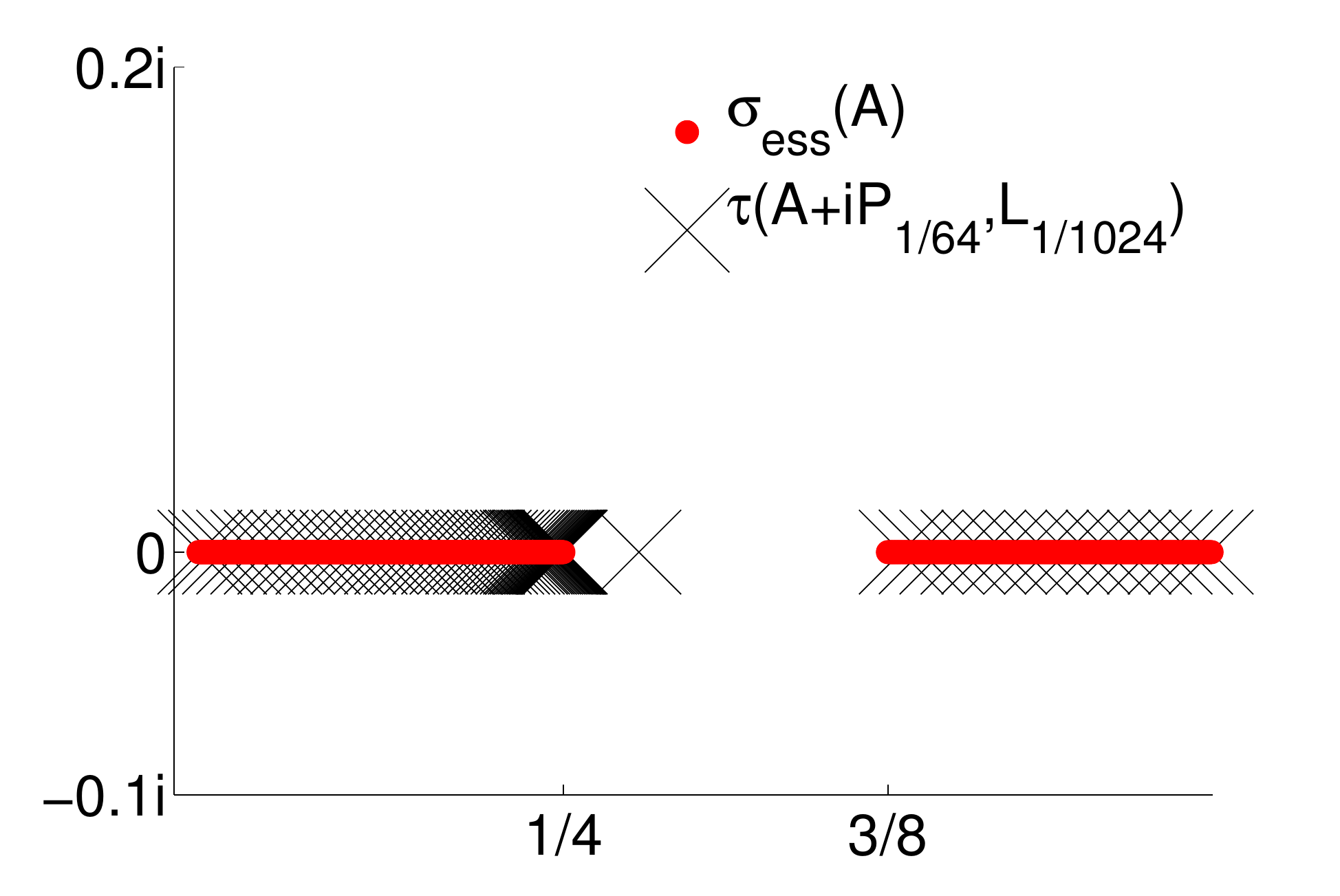}\includegraphics[scale=.3]{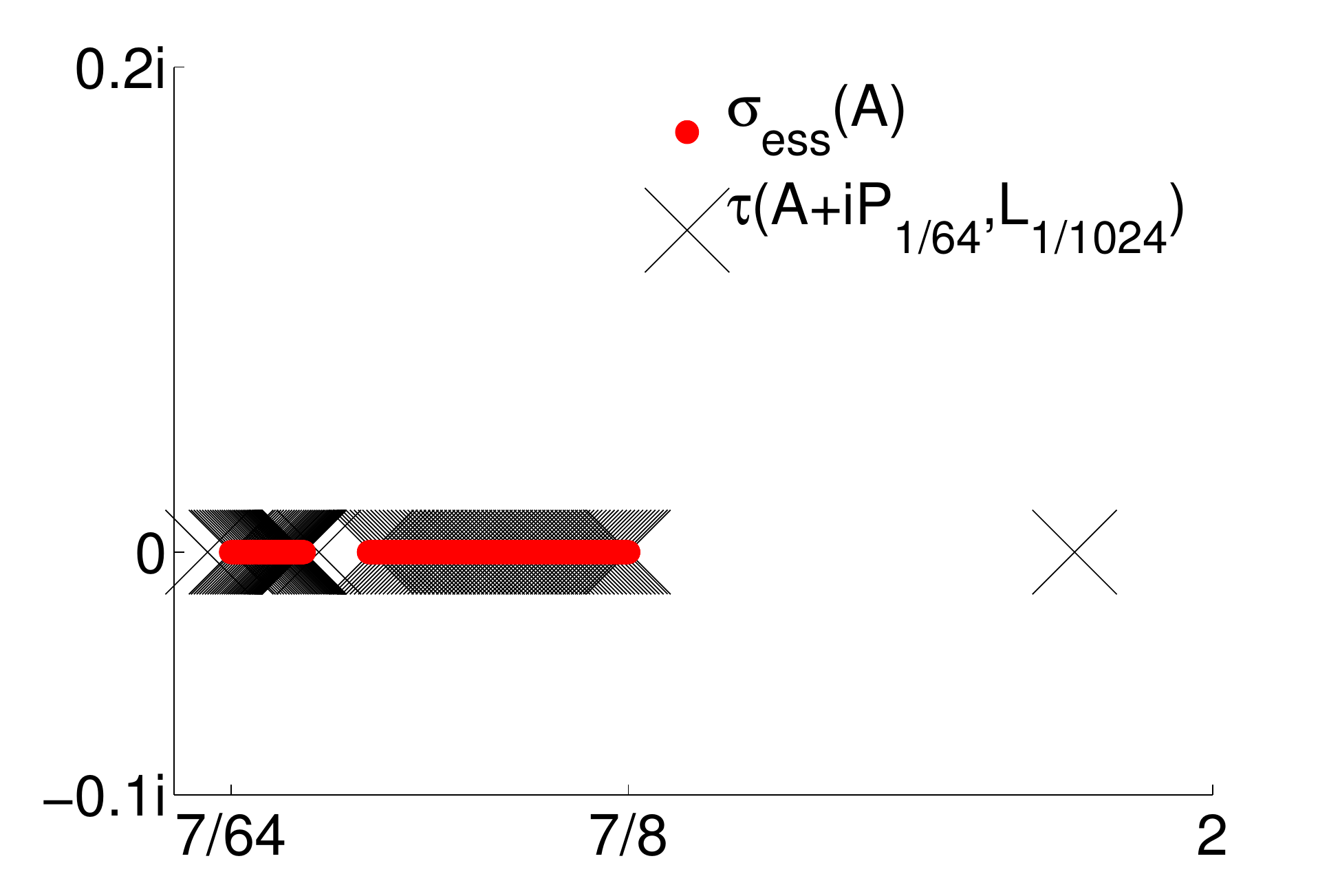}
\caption{On the top row, we see the Galerkin method approximation for $\sigma(A)$ from Example \ref{magneto}. There are many Galerkin eigenvalues in the gap in the essential spectrum and many more just above the essential spectrum; in these regions we should be suspicious of spectral pollution. The second row shows the perturbation method approximation for $\sigma(A)$ from Example \ref{magneto}; the essential spectrum is approximated, as are two eigenvalues, one in the gap and one just above the essential spectrum. The perturbation method has identified the spectral pollution in the gap and above the essential spectrum.}
\end{figure}

\end{example}

\begin{example}\label{schro1}
With $\mathcal{H}=L^2(\mathbb{R})$ we consider the Schr\"odinger operator
\[
Au = -u'' + \Big(\cos x - e^{-x^2}\Big)u.
\]
The essential spectrum of $A$ has a band structure. The first three intervals of essential spectrum are approximately 
\[
[-0.37849,-0.34767],\quad [0.5948,0.918058]\quad\textrm{and}\quad [1.29317,2.28516].
\]
The second order relative spectrum has been applied to this operator, see \cite{bole}, where  the following approximate eigenvalues were identified 
\[\lambda_1\approx -0.40961,\quad\lambda_2\approx 0.37763,\quad\textrm{and}\quad\lambda_3\approx 1.18216.\]
We note that $\lambda_1$ is below the essential spectrum,
$\lambda_2$ is in the first gap in the essential spectrum, and $\lambda_3$
is in the second gap. We apply the perturbation method with the trial spaces $\mathcal{L}_{(X,Y)}$ which is a $Y$-dimensional space of piecewise linear trial functions on the the interval $[-X,X]$ which vanish at the boundary, and $P_{(X,Y)}$ is the orthogonal projection onto $\mathcal{L}_{(X,Y)}$. The left-hand side of Figure 9 shows the perturbation method has clearly identified the first two bands of essential spectrum and the eigenvalues $\lambda_1$ below the essential spectrum, $\lambda_2$ in the first gap, and $\lambda_3$ in the second gap. 
\begin{figure}[h!]
\centering
\includegraphics[scale=.3]{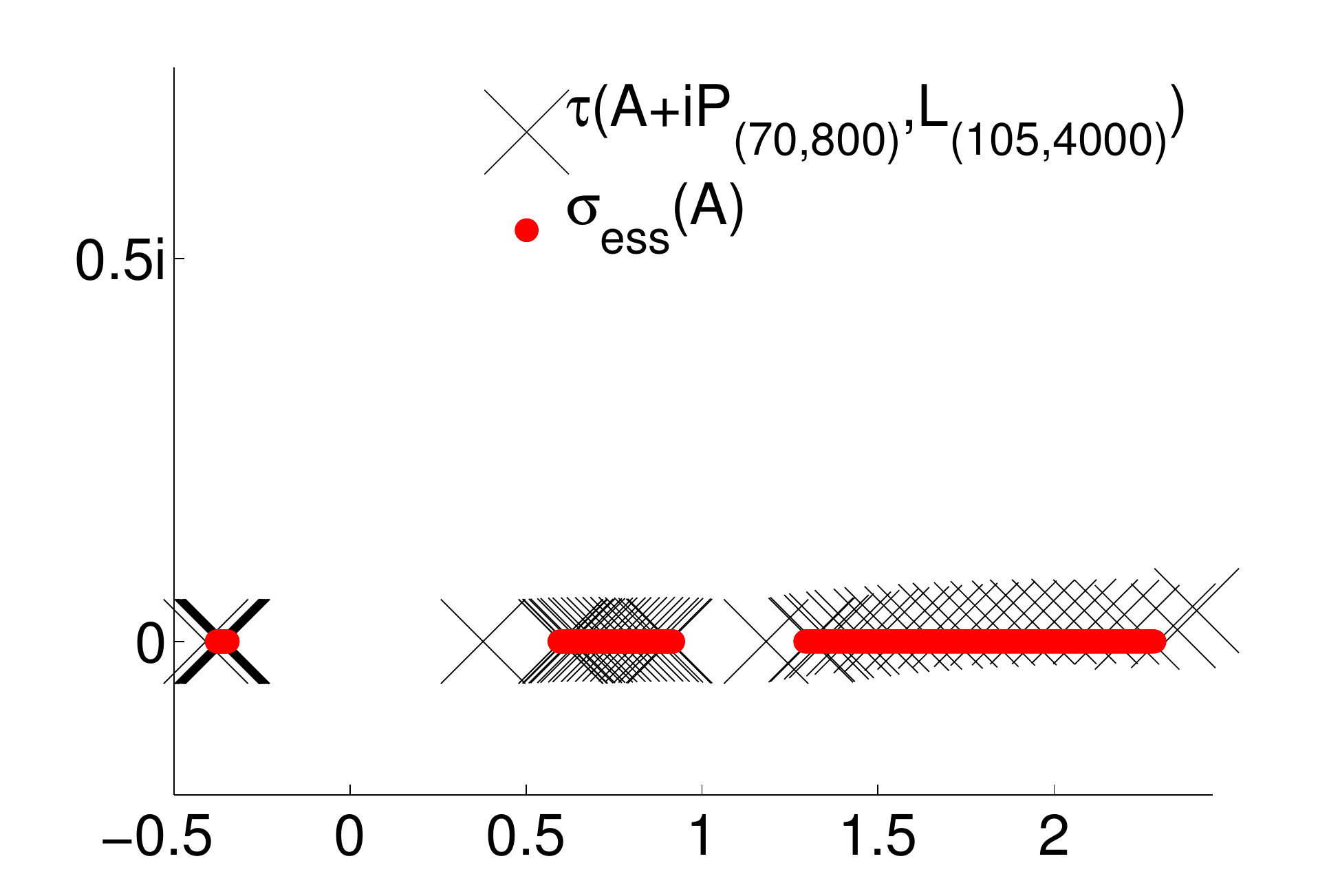}\includegraphics[scale=.3]{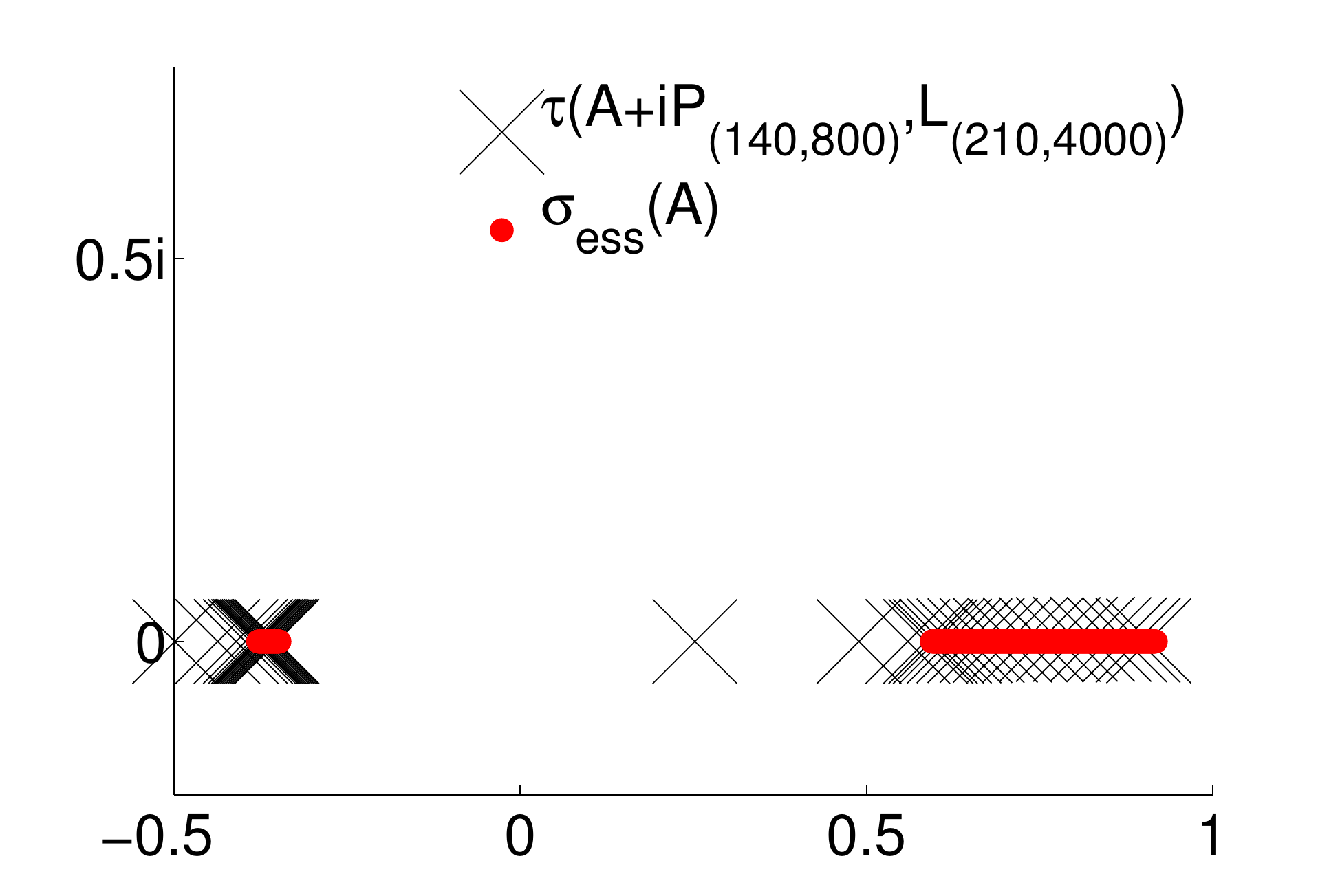}
\caption{The left-hand side shows the perturbation method approximation approximation for $\sigma(A)$ from Example \ref{schro1}. The first two bands of essential spectrum are approximated, as are the eigenvalues $\lambda_1$, $\lambda_2$ and $\lambda_3$. The right-hand side shows the perturbation method approximation approximation for $\sigma(A)$ from Example \ref{schro2}. The first two bands of essential spectrum are approximated, as are the first three eigenvalues in the first gap in the essential spectrum.}
\end{figure}

\end{example}


\begin{example}\label{schro2}
With $\mathcal{H}=L^2\big((0,\infty)\big)$ we consider the Schr\"odinger operator
\[
Au = -u'' + \left(\sin x - \frac{40}{1+x^2}\right)u,\quad u(0)=0.
\]
This example has been also been considered in \cite{mar2}. The first three bands of essential spectrum are the same as in the previous example. However, this time there are infinitely many eigenvalues in the gaps which accumulate at the lower end point of the bands with their spacing becoming exponentially small; see \cite{sch}. We apply the perturbation method with the trial spaces $\mathcal{L}_{(X,Y)}$ which is a $Y$-dimensional space of piecewise linear trial functions on the interval $[0,X]$ which vanish at the boundary. The operator $P_{(X,Y)}$ is the orthogonal projection onto trial space $\mathcal{L}_{(X,Y)}$. The right-hand side of Figure 9 shows that the perturbation method has approximated three eigenvalues in the first gap of the essential spectrum.
\end{example}

We should stress the ease with which the above calculations are conducted. The perturbation method does not require trial spaces from the operator domain, thus we have been able to use the FEM spaces of piecewise linear trial functions. The quadratic methods cannot be applied with these trial spaces. Our final example is outside much of the theory so far developed for the perturbation method, this is because the operator concerned is indefinite. However, the numerical results suggest that the perturbation method can be extended to the indefinite case. The second order relative spectra has been applied to this example and the code made available online; see \cite{bb} and \cite{nlevp}, respectively. We use this code to apply the Galerkin method, the perturbation method, the second order relative spectrum, the Davies \& Plum method and Zimmermann \& Mertins method.
\begin{example}\label{diracex}
With $\mathcal{H}=\big[L^2\big((0,\infty)\big)\big]^2$ we consider the Dirac operator
\begin{displaymath}
A=\left(\begin{array}{cc}
        I - \frac{1}{2x}~&~ -\frac{d}{dx}-\frac{1}{x}\\\\
	\frac{d}{dx} -\frac{1}{x}~&~-I - \frac{1}{2x}
      \end{array}\right).
\end{displaymath}
We have $\sigma_{\ess}(A)= (-\infty,-1]\cup[1,\infty)$ and the interval $(-1,1)$ contains the eigenvalues
\[
\sigma_{\dis}(A)= \left(1+\frac{1}{4(j-1+\sqrt{3/4})^2}\right)^{-1/2}\quad j=1,2,\dots.
\]
We use trial spaces generated by Hermite functions of odd order; see \cite{bb} for further details. There is no spectral pollution incurred by the Galerkin method in this example, therefore we can also compare the perturbation method with the Galerkin method. The second order relative spectrum is known to converge to the discrete spectrum; see \cite{bo}. From this method we obtain a sequence of complex numbers with $z_n\to\lambda_1$, where $n$ is the dimension of the trial space. The sequence of real parts $(\Re z_n)$ we take as our approximation for $\lambda_1$. The Davies \& Plum and Zimmermann \& Mertins methods, which are equivalent, provide a sequence of intervals containing $\lambda_1$ we take the mid-point of these intervals, which we denote by $w_n$, to be our approximation of $\lambda_1$. Our numerical results suggest the following convergence rates
\begin{align*}
&\dist\big(\lambda_1 +i,\sigma(A+iP_{n/2},P_n)\big)=\mathcal{O}(n^{-0.9}),~
\dist\big(\lambda_1,\sigma(A,P_n)\big)=\mathcal{O}(n^{-0.9}),\\
&\vert\lambda_1 - z_n\vert=\mathcal{O}(n^{-0.2}),~\vert\lambda_1 - \Re z_n\vert=\mathcal{O}(n^{-0.7})\textrm{~and~}\vert\lambda_1 - w_n\vert=\mathcal{O}(n^{-0.2}).
\end{align*}
Again we see the performance of the perturbation method is essentially the same as the Galerkin method. We also note the relatively poor performance of the quadratic methods. The latter is not entirely surprising as the known convergence rates for quadratic methods are measured in terms of $\delta_{A}(\mathcal{L}(\{\lambda\}),\mathcal{L}_n)$, i.e., the distance of the eigenspace to the trial space with respect to the graph norm; see \cite[Lemma 2]{bost1} and \cite[Section 6]{me3}.
  
\end{example}
\section{Conclusions and further research}
Our theoretical results are, for the most part, focused on the perturbation and approximation of the discrete spectrum. However, the examples indicate that our new perturbation method also captures the essential spectrum. This should be further investigated. For the approximation of eigenvalues, the rapid convergence assured by theorems \ref{QQ} \& \ref{eigconv} mean that, in terms of accuracy and convergence, we can expect the perturbation method to significantly outperform the quadratic methods. The fact that the former may be applied with trial spaces from the form domain is another significant advantage. Recently a second pollution-free and non-quadratic technique has emerged; see \cite{me4}. Currently the latter has the disadvantage of requiring \'a priori information about gaps in the essential spectrum, however, it does have the advantage of a self-adjoint algorithm. In terms of accuracy and convergence, there appears to be little separating these two non-quadratic techniques; see \cite[examples 5.2 \& 5.3]{me4}. Which technique is preferable will likely depend on the particular situation and availability of \'a priori information; this should be the subject of further study.

\section{Acknowledgements}
The authors are grateful to Marco Marletta and Eugene Shargorodsky for many useful discussions. Michael Strauss is also grateful for the support of the Wales Institute of Mathematical and Computational Sciences and the Leverhulme Trust grant: RPG-167.

\void{
\appendix
\section{Second order relative spectra and convergence}
The method of second order relative spectra has been extensively studied over the past 15 years. Interest in this method has been stimulated by the fact that it provides intervals which intersect the spectrum. With some \'a priori information the method can also provide enclosures for eigenvalues. The technique is known to converge to the discrete spectrum; see \cite{bo}. It was also thought, by many, to converge to the essential spectrum, however, this has recently been shown to be false, in general; see \cite{shar2}. For the discrete spectrum, we briefly examine the quality of the approximation and of the enclosures provided by this method. We also provide a new proof, which is based on classical spectral approximation theory, of the convergence rate to elements from $\sigma_{\dis}(A)$. 
\begin{definition}
Let $A$ be a self-adjoint operator acting on a Hilbert space $\mathcal{H}$. The second order spectrum of $A$ relative to a subspace $\mathcal{L}\subset\Dom(A)$, denoted $\Spec_2(A,\mathcal{L})$, consists of those $z\in\mathbb{C}$ for which there exists a $0\ne u\in\mathcal{L}$ such that
\[
\langle(A-z)u,(A-\overline{z})v\rangle=0\quad\forall v\in\mathcal{L}.
\]
\end{definition}
To apply the second order relative spectrum we need trial spaces which belong to the operator domain, rather than the preferred form domain. We must also assemble a matrix with entries of the form $\langle Au_i,Au_j\rangle$, which is also awkward. However, the method does have some nice properties: if $z\in\Spec_2(A,L)$ then
\begin{equation}\label{spec21}
\sigma(A)\cap\big[\Re z - \vert\Im z\vert,\Re z+\vert\Im z\vert\big]\ne\varnothing,
\end{equation}
if $(a,b)\cap\sigma(A)=\{\lambda\}$ and $a<\Re z<b$, then
\begin{equation}\label{spec22}
\lambda\in\left[\Re z - \frac{\vert\Im z\vert^2}{b-\Re z},\Re z+\frac{\vert\Im z\vert^2}{\Re z-a}\right];
\end{equation}
see \cite[corollaries 3.4 \& 4.2]{shar} and \cite[Remark 2.3]{me2}, respectively. We saw in Example \ref{diracex} that we obtain a sequence $z_n\in\Spec_2(A,\mathcal{L}_n)$ with $z_n\to\lambda_1$, let us compare the approximation of $\lambda_1$ by $\Re z_n$ to the size of the enclosures \eqref{spec21} and \eqref{spec22}; for the latter we may choose $a=-1$ and $b=\lambda_2$. We find that
\begin{align*}
\vert\lambda_1-\Re &z_n\vert=\mathcal{O}(n^{-0.7}),\quad2\vert\Im z_n\vert=\mathcal{O}(n^{-0.2}),\\
&\frac{\vert\Im z_n\vert^2}{b-\Re z_n}+\frac{\vert\Im z_n\vert^2}{\Re z_n-a}=\mathcal{O}(n^{-0.4}).
\end{align*}
which suggests that the enclosures obtained from the second order relative spectrum are very poor when compared to the approximation provided by $\Re z_n$. The latter, in turn, is poor when compared to the approximation provided by the perturbation method; see Example \ref{diracex}. In applications, by using the second order relative spectrum (or any other quadratic method) we can obtain intervals which intersect the spectrum, however, the actual approximate eigenvalues obtained are significantly compromised by using quadratic methods.
\subsection{The convergence of $\Spec_2(A,\mathcal{L}_n)$}
Using the well established convergence theory for the Galerkin method we will prove convergence properties for the second order relative spectra; see also \cite[Theorem 4.9, Theorem 6.1 \& Corollary 6.2]{me3}.
Unless stated otherwise we assume that $A$ is a bounded self-adjoint operator.
Consider the block matrix
\[
T:=\begin{pmatrix} 2A & -A^2 \\ I & 0 \end{pmatrix}:\mathcal{H}\oplus\mathcal{H}\to\mathcal{H}\oplus\mathcal{H}.
\]
\begin{lemma}{\cite[Lemma 3.1]{me3}}
$\sigma(T)=\sigma(A)$. If $\lambda\in\sigma_{\dis}(A)$ has multiplicity $m$, then  $\lambda$ is an eigenvalue of $T$ with algebraic multiplicity $2m$, geometric multiplicity $m$, ascent $2$.
\end{lemma}
\begin{lemma}{\cite[Lemma 3.2]{me3}}
$\Spec_2(A,\mathcal{L})=\sigma(T,\mathcal{L}\oplus\mathcal{L})$.
\end{lemma}
Let $(P_n)$ be a sequence of finite-rank orthogonal projections which converge strongly to the identity operator. The range of $P_n$ is denoted $\mathcal{L}_n$.  
\begin{lemma}{\cite[Theorem 4.4]{me3} see also \cite[proof of Theorem 1]{bo}}\label{mebo}
Let $\lambda\in\sigma_{\dis}(A)$ with $\dist(\lambda,\sigma(A)\backslash\{\lambda\})>r$. There exists a constant $c_r>0$ and an $N\in\mathbb{N}$, such that
\begin{equation}\label{nproof}
\Vert P_n(A-z)(A-z)P_nu\Vert\ge c_r\Vert P_nu\Vert
\end{equation}
for all $u\in\mathcal{H}$, $\vert\lambda-z\vert=r$, and $n\ge N$.
\end{lemma}
\begin{lemma}\label{nb}
Let $\lambda\in\sigma_{\dis}(A)$ with $\dist(\lambda,\sigma(A)\backslash\{\lambda\})>r$. There exists a constant $d_r>0$ and an $N\in\mathbb{N}$, such that
\begin{equation}\label{nproof2}
\left\Vert\begin{pmatrix} 2P_nAP_n - zP_n & -P_nA^2P_n \\ P_n & -zP_n \end{pmatrix}\begin{pmatrix} u \\ v \end{pmatrix}\right\Vert\ge d_r\left\Vert\begin{pmatrix} P_nu \\ P_nv \end{pmatrix}\right\Vert
\end{equation}
for all $u,v\in\mathcal{H}$, $\vert\lambda-z\vert=r$, and $n\ge N$.
\end{lemma}
\begin{proof}
Suppose the assertion is false. Then there exists a subsequence $n_j$, a sequence $(z_{n_j})$ with $\vert\lambda - z_{n_j}\vert=r$, and vectors $u_{n_j},v_{n_j}\in\mathcal{L}_{n_j}$ with $\Vert u_{n_j}\Vert^2+\Vert v_{n_j}\Vert^2=1$, such that
\[\left\Vert\begin{pmatrix} 2P_{n_j}A -z_{n_j} & -P_{n_j}A^2\\ I & -z_{n_j} \end{pmatrix}\begin{pmatrix} u_{n_j} \\ v_{n_j} \end{pmatrix}\right\Vert\to 0
\]
Without loss of generality we suppress the second subscript. We have
\begin{align}
2P_{n}Au_n -z_nu_n -P_{n}A^2v_n &\to 0\\
u_n - z_nv_n &\to 0.\label{two}
\end{align}
Then for some sequence of reals $0\le s_n\to 0$ and a sequence of normalised vectors $(w_n)$, we have
\[
u_n - z_nv_n = s_nw_n.
\]
Then
\[
-P_n(A-z)(A-z)v_n + s_n(2P_nA-z_n)w_n = 2P_{n}Au_n-zu_n -P_{n}A^2v_n\to0.
\]
Lemma \ref{mebo} implies that $v_n\to 0$. Then \eqref{two} implies that $u_n\to 0$. The result follows from the contradiction.
\end{proof}
Let us fix a $0\ne \lambda\in\sigma_{\dis}(A)$ (the case where $\lambda=0$ may be treated similarly by introducing a shift) and an $r<\dist(\lambda,\sigma(A)\backslash\{\lambda\})$ such that the circle $\vert\lambda -z\vert=r$ does not enclose zero. Denote by $\mathcal{M}$ the spectral subspace associated to $\lambda\in\sigma(T)$ and by $\mathcal{M}_n$ the spectral subspace associated to the operator
\begin{equation}\label{top}
\begin{pmatrix} 2P_{n}A & -P_{n}A^2 \\ I & 0 \end{pmatrix}:\mathcal{H}\oplus\mathcal{H}\to\mathcal{L}_n\oplus\mathcal{L}_n
\end{equation}
and the those eigenvalues enclosed by the circle $\vert\lambda - z\vert=r$.
\begin{lemma}{\cite[Theorem 4.6]{me3}}\label{sdim}
$\dim(\mathcal{M})=\dim(\mathcal{M}_n)$ for all sufficiently large $n$.
\end{lemma}
In view of Lemma \ref{nb} and Lemma \ref{sdim}, the operator \eqref{top} satisfies the definition of  \emph{strongly stable convergence} to $T$ in a neighbourhood of $\lambda$; see \cite[Chapter 5]{chat}. The following theorem is now a straightforward consequence of \cite[Theorem 6.11]{chat}.
\begin{theorem}\label{conn}
Let $z_n\in\Spec_2(A,\mathcal{L}_n)$ with $z_n\to\lambda$, then 
\[
\vert z_n-\lambda\vert=\mathcal{O}(\delta(\mathcal{L}(\{\lambda\}),\mathcal{L}_n))\quad\text{and}\quad
\vert\Re z_n-\lambda\vert=\mathcal{O}(\delta(\mathcal{L}(\{\lambda\}),\mathcal{L}_n)^2).\]
\end{theorem}
We now assume that $A$ is an unbounded self-adjoint operator. As above, $(P_n)$ denotes a sequence of finite-rank orthogonal projections each with range $\mathcal{L}_n$. We shall assume that
\[
\forall u\in\Dom(A)\quad\exists u_n\in\cL_n:\quad\Vert u-u_n\Vert_{A}\to0.
\]
The following theorem is now an immediate consequence of Theorem \ref{conn} and \cite[Lemma 2.6]{bost}.
\begin{theorem}\label{thmcon}
Let $z_n\in\Spec_2(A,\mathcal{L}_n)$ with $z_n\to\lambda$, then 
\[\vert z_n-\lambda\vert=\mathcal{O}(\delta_{A}(\mathcal{L}(\{\lambda\}),\mathcal{L}_n))\quad\text{and}\quad
\vert\Re z_n-\lambda\vert=\mathcal{O}(\delta_{A}(\mathcal{L}(\{\lambda\}),\mathcal{L}_n)^2).\]
\end{theorem}
The convergence rates in Theorem \ref{thmcon} are measured in terms of the graph norm which is why the method converges poorly; the convergence achieved by the Galerkin and perturbation methods is measured in terms of the norm associated to the quadratic form.
}


\begin{thebibliography}{99}
\bibitem{nlevp}{\scshape T. Betcke, N. J. Higham, V. Mehrmann, C. Schr\"oder, and F. Tisseur}, NLEVP: A Collection of Nonlinear Eigenvalue Problems, MIMS EPrint 2011.116, December 2011.
\bibitem{boff}{\scshape D. Boffi, F. Brezzi, L. Gastaldi}, On the problem of spurious eigenvalues in the approximation
of linear elliptic problems in mixed form. Math. Comp., 69 (229) (2000) 121--140.
\bibitem{boff2}{\scshape D. Boffi, R. G. Duran, L. Gastaldi}, A remark on spurious eigenvalues in a square. Appl. Math.
Lett., 12 (3) (1999) 107--114.
\bibitem{bo}{\scshape L. Boulton}, Limiting set of second order spectrum. Math. Comp., 75 (2006) 1367--1382.
\bibitem{bb}{\scshape L. Boulton, N. Boussaid}, Non-variational computation of the eigenstates of Dirac operators with radially symmetric potentials. LMS J. Comput. Math. 13 (2010) 10--32.
\bibitem{bole}{\scshape L. Boulton, M. Levitin}, On Approximation of the Eigenvalues of Perturbed Periodic Schr\"odinger Operators. J. Phys. A: Math. Theor. 40 (2007) 9319--9329. 
\bibitem{bost}{\scshape L. Boulton, M. Strauss}, On the convergence of second-order spectra and multiplicity. Proc. R. Soc. A 467 (2011) 264--275.
\bibitem{bost1}{\scshape L. Boulton, M. Strauss}, Eigenvalues enclosures and convergence for the linearized MHD operator. Bit Numer. Math. 52 (2012) 801--825.
\bibitem{chat}{\scshape F. Chatelin}, Spectral Approximation of Linear Operators. Academic Press (1983).
\bibitem{dasu}{\scshape M. Dauge, M. Suri}, Numerical approximation of the spectra of non-compact operators
arising in buckling problems. J. Numer. Math. 10 (2002) 193--219.
\bibitem{dav}{\scshape E. B. Davies}, Spectral enclosures and complex resonances for general self-adjoint operators. LMS
J. Comput. Math. 1 (1998) 42--74.
\bibitem{DP}{\scshape E. B. Davies, M. Plum},
Spectral Pollution. IMA J. Numer. Anal. 24 (2004) 417--438.
\bibitem{ah1}{\scshape A. C. Hansen}, On the approximation of spectra of linear operators on Hilbert spaces, J. Funct. Anal. 254 (8) (2008) 2092--2126.
\bibitem{ah2}{\scshape A. C. Hansen}, Infinite dimensional numerical linear algebra; theory and applications, Proc. R. Soc. Lond. Ser. A. 466 (2124) (2010) 3539--3559.
\bibitem{ah3}{\scshape A. C. Hansen}, On the Solvability Complexity Index, the n-Pseudospectrum and Approximations of Spectra of Operators, J. Amer. Math. Soc. 24 (1) (2011) 81--124.
\bibitem{kat}{\scshape T. Kato}, On the upper and lower bounds of eigenvalues. J. Phys. Soc. Jpn. 4 (1949) 334--339.
\bibitem{kato}{\scshape T. Kato}, Perturbation theory for nullity, deficiency and other quantities of linear operators. J. Anallyse Math. 6 (1958) 261--322.
\bibitem{katopert}{\scshape T. Kato},
Perturbation Theory for Linear Operators. Springer-Verlag (1995).
\bibitem{lesh}{\scshape M. Levitin, E. Shargorodsky},
Spectral pollution and second order relative spectra for self-adjoint operators. IMA J. Numer. Anal. 24 (2004) 393--416.
\bibitem{mar1}{\scshape M. Marletta}, Neumann-Dirichlet maps and analysis of spectral pollution for
non-self-adjoint elliptic PDEs with real essential spectrum. IMA J. Numer.
Analysis 30 (2010) 917--939.
\bibitem{mar3}{\scshape M. Marletta, S. Naboko}, The finite section method for dissipative operators. Mathematika 60 (2) (2014) 415--443.
\bibitem{mar2}{\scshape M. Marletta, R. Scheichl}, Eigenvalues in Spectral Gaps of Differential Operators. J. Spectral Theory 2 (3) (2012) 293--320.
\bibitem{rapp}{\scshape J. Rappaz, J. Sanchez Hubert, E. Sanchez Palencia \& D. Vassiliev}, On spectral pollution in
the finite element approximation of thin elastic membrane shells. Numer. Math. 75 (1997) 473--500.
\bibitem{sch}{\scshape K.M. Schmidt}, Critical coupling constants and eigenvalue asymptotics of perturbed periodic Sturm-Liouville operators. Comm. Math. Phys. 211 (2000) 465--485.
\bibitem{shar}{\scshape E. Shargorodsky}, Geometry of higher order relative spectra and projection methods. J. Oper. Theory,
44 (2000) 43--62.
\bibitem{shar2}{\scshape E. Shargorodsky}, On the limit behaviour of second order relative spectra of self-adjoint operators. J. Spectral Theory 3 (4) (2013) 535--552.
\bibitem{me2}{\scshape M. Strauss}, Quadratic Projection Methods for Approximating the Spectrum of Self-Adjoint Operators, IMA J. Numer. Anal. 31 (2011) 40--60.
\bibitem{me3}{\scshape M. Strauss}, The second order spectrum and optimal convergence. Math. Comp. 82 (2013) 2305--2325.
\bibitem{me}{\scshape M. Strauss}, The Galerkin Method for Perturbed Self-Adjoint Operators and Applications. J. Spectral Theory 4 (1) (2014) 113--151.
\bibitem{me4}{\scshape M. Strauss}, A new approach to spectral approximation. J. Funct. Anal., 267 (8) (2014) 3084--3103.
\bibitem{Tretter}{\scshape C. Tretter}, Spectral Theory Of Block Operator Matrices And Applications. Imperial College Press (2007).
\bibitem{zim}{\scshape S. Zimmermann, U. Mertins}, Variational bounds to eigenvalues of self-adjoint eigenvalue problems
with arbitrary spectrum. Z. Anal. Anwend. 14 (1995) 327--345.
\end{thebibliography}
\end{document}